\documentclass{amsart}
\usepackage[utf8]{inputenc}
\usepackage{amsthm}
\usepackage{amsmath}
\usepackage{xcolor}

\usepackage[a4paper,  margin=1in]{geometry}

\usepackage{epic}
\usepackage{amssymb}
\usepackage{graphicx}
\usepackage{epsfig}
\newtheorem{thm}{Theorem}[section]
\newtheorem{prop}[thm]{Proposition}
\newtheorem{lem}[thm]{Lemma}
\newtheorem{cor}[thm]{Corollary}
\theoremstyle{definition}
\newtheorem{dfn}[thm]{Definition}
\theoremstyle{remark}
\newtheorem{rmk}[thm]{Remark}

\makeatletter

\numberwithin{equation}{section}
\numberwithin{figure}{section}

\usepackage[all]{xy}
\newcommand{\vertexbn}{\begin{picture}(20,20)
  \put(6,-2){\line(1,0){20}}\put(6,2){\line(1,0){20}}
  \put(20,0){\line(-1,1){10}}\put(20,0){\line(-1,-1){10}}
  \end{picture}}
  \newcommand{\vertexbbn}{\begin{picture}(20,20)
 \put(6,0){\line(1,0){19}}\put(6,0){\line(1,0){19}}
  \put(5,-3){\line(1,0){21}}\put(5,3){\line(1,0){21}}
  \put(20,0){\line(-1,1){10}}\put(20,0){\line(-1,-1){10}}
  \end{picture}}
   \newcommand{\vertexbbbn}{\begin{picture}(20,20)
\put(6,-1){\line(1,0){20}}\put(6,1){\line(1,0){20}}
  \put(5.5,-3){\line(1,0){21}}\put(5.5,3){\line(1,0){21}}
  \put(20,0){\line(-1,1){10}}\put(20,0){\line(-1,-1){10}}
  \end{picture}}
  \newcommand{\vertexcn}{\begin{picture}(20,20)
  \put(6,-2){\line(1,0){20}}\put(6,2){\line(1,0){20}}
  \put(15,0){\line(1,-1){10}}\put(15,0){\line(1,1){10}}
  \end{picture}}
  \newcommand{\vertexcccn}{\begin{picture}(20,20)
  \put(6,-1){\line(1,0){20}}\put(6,1){\line(1,0){20}}
  \put(5.5,-3){\line(1,0){20}}\put(5.5,3){\line(1,0){20}}
  \put(15,0){\line(1,1){10}}\put(15,0){\line(1,-1){10}}
  \end{picture}}
    \newcommand{\vertexccn}{\begin{picture}(20,20)
  \put(7,0){\line(1,0){20}}
  \put(6,-3){\line(1,0){22}}\put(6,3){\line(1,0){22}}
  \put(15,0){\line(1,1){10}}\put(15,0){\line(1,-1){10}}
  \end{picture}}
\makeatother
\begin{document}
\title{ Vogan diagrams of  Hyperbolic  Kac-Moody algebras  }
\author{B Ransingh and K C Pati}
\date{}
\maketitle

\begin{abstract}
 This paper constructs the Vogan diagrams for hyperbolic Kac-Moody algebras,
 which have potential physical applications in cosmological billiards.
\end{abstract}

\noindent PACS numbers: 02.20.Sv, 04.65.+e\\
\noindent 2010 AMS Subject classification: 81R10, 17B67, 22E60, 22E65

\noindent Keywords - Vogan diagram; Kac-Moody algebras

\section{Introduction}
For a complex semi-simple Lie algebra $\mathfrak{g}$, it is well known that the conjugacy classes of real
forms of $\mathfrak{g}$ are in one to one correspondence with the conjugacy classes of involutions of $\mathfrak{g}$, if
we associate to a real form $\mathfrak{g}_{\mathbb{R}}$ one of its Cartan involutions $\theta$. Using a suited pair 
$(\mathfrak{h}, \prod)$ of a
Cartan subalgebra $\mathfrak{h}$ and a basis $\prod$ of the associated root system, an involution is described by
a "Vogan diagram"; the equivalences of Vogan diagrams describe how such a diagram changes 
when one changes $(\mathfrak{h}, \prod)$. When $\mathfrak{g}$ is now an affine Kac-Moody algebra,
the one to one correspondence $\mathfrak{g}_{\mathbb{R}}\leftrightarrow\theta$
was established by Ben Messaoud and Rousseau (Theorem 3.1  \cite{rousseu}).

The last two decades shows a gradual advancement in classification of real form of semisimple Lie algebras via Kac-Moody algebras to
Kac-Moody superalgebras by Satake diagrams and Vogan diagrams.
Satake diagram based on splits Cartan subalgebra    where as Vogan diagram based on
maximally compact Cartan subalgebra.
Kac-Moody  algebras have played an increasingly crucial role in various 
areas of mathematics as well as theoretical physics. The hyperbolic Kac-Moody 
algebras which constitute a subclass of Lorentzian Kac-Moody algebras and some
of their (almost split)  real forms have appeared in a variety of problems in the realms of string field theory 
and super gravity theories. The almost split real forms have been classified  by
 Tits-Satake diagrams \cite{Messaud:Hyperbolic}. 

Knapp \cite{Knapp:Lie groups} brought Vogan diagram  into the light to represent the real forms of the complex simple Lie algebras. 
Batra \cite{batra:affine,batra:vogan} developed a corresponding theory of Vogan diagrams for  real forms of  nontwisted
 affine Kac-Moody Lie algebras. Hyperbolic Kac-Moody algebras among all types of Kac-Moody algebras is least studied. 
However  these types of algebras have led to many potential physical applications.

In space like singularity, at each spatial point, the degree of freedom that carry the essential dynamics are the logarithms 
$\beta^{i}\equiv -\ln a_{i}$ of the scale factors $a_i$ along a set of (special) independent 
spatial directions ($ds^{2}=-dt^{2}+\sum _{i}a_{i}^{2}(t,x)(w^{i})^{2}$). The billiard analysis refers to the dynamics, in the vicinity of a
spacelike singularity of a gravitational model described by the
Lagrangian\cite{Damaour}
\begin{eqnarray*} {\mathcal L}_D &=& ^{(D)}R\sqrt{\vert^{(D)}g\vert}\,dx^0\wedge...\wedge dx^{D-1} -\sum_\alpha \star
d\phi^\alpha\wedge d\phi^\alpha - \nonumber \\ & \, &
-\frac{1}{2}\sum_p\,e^{\sum_\alpha \lambda^{(p)}_\alpha \phi^\alpha}\star F^{(p+1)}\wedge
F^{(p+1)}, \quad D\geq 3\end{eqnarray*} where R is the spatial curvature scalar and $\star$ is the Hodge star operator.

The real parameter $\lambda_{\alpha}^{(p)}$ measures the strength of the coupling to the dilaton. Field strength is $F^{(p)}$
The dilatons are denoted by $\phi^\alpha, (\alpha=1,...,N)$; their kinetic terms are normalized with a weight 1 with respect to the
Ricci scalar. The Einstein metric has Lorentz signature $(-,+,...,+)$; its determinant is $^{(D)}g$.

The light-like wall and the  walls bounding the  billiard have different origins:
some arise from the Einstein-Hilbert action and involve only the
scale factors $\beta^i,\,(i=1,...,d)$, introduced through the
Iwasawa decomposition of the space metric.
For example a criterion for gravitational dynamics to be chaotic is that the billiard has finite volume. 
This in turn stems from the remarkable property that the billiard can be identified with
the fundamental Weyl chamber of an hyperbolic Kac-Moody algebra. In this paper we have obtained 
the Vogan diagrams of some of the hyperbolic Kac-Moody algebras which have been discussed in paper \cite{thesis:mtheory1},
so that these diagrams can lead to more interesting studies related to billiard, M-theory etc.
More particularly we have also obtained Vogan diagram of hyperbolic Kac-Moody
algebra $E_{10}$.


\section{Preliminaries}
The following definitions are useful tools in our work.\\
\begin{dfn}({\it Generalised Cartan Matrix} ({\it GCM})) \cite{kac:edition} An integral matrix
$A=(a_{ij})_{i,j=1}^{r}$ is said to be {\it GCM} if  

\[ a_{ij}=\begin{cases}

\begin{array}{cc}
2 & i=j\\
\leq0 & i\neq j
\end{array}
\end{cases}
\]

\noindent and

\[a_{ij}=0\Rightarrow a_{ji}=0\]\\
\end{dfn}
\begin{dfn}
({\it Kac-Moody Algebra})\cite{kac:edition} Let $e_i$, $f_i$ and $h_i$ denote  the $3r$  Chevalley 
generators. The {\it Kac-Moody  algebra}  is defined as the  algebra $\mathfrak{g}$ together with the following  relations
\end{dfn}
\begin{itemize}
 \item[(a)]   $\left[h_{i}, h_{j}\right]=0$,
 \item[(b)]   $\left[e_{i},f_{i}\right]=h_{i}$,
 \item[(c)]   $\left[e_{i},f_{j}\right]=0$ if $i\neq j$,
 \item[(d)]   $\left[h_{i},e_{j}\right]=a_{ij}e_{j}$,
 \item[(e)]   $\left[h_{i},f_{j}\right]=-a_{ij}f_{j}$,
 \item[(f)]   $(\mbox{ad }e_{i})^{1-a_{ij}}e_{j}=0$,
 \item[(g)]   $(\mbox{ad }f_{i})^{1-a_{ij}}f_{j}=0$ if
$i\neq j$.
\end{itemize}
\vspace{0.5cm}

\begin{dfn} ({\it Dynkin Diagram}) \cite{kac:edition} A Dynkin diagram associated with a $GCM$ $A$  
is a graph with the following properties:
\begin{itemize}
 \item [(a)] The Dynkin diagram has $r$ vertices.
 \item[(b)] When $a_{ij}a_{ji}=n\leq4$, the vertices $i$ and $j$ are joined by $n$ lines.
 \item [(c)] if $a_{ij}a_{ji}\leq 4$ and $|a_{ij}|\geq |a_{ji}|$, the vertices $i$ and $j$ are connected by   $|a_{ij}|$ lines
and these lines are equipped with an arrow pointing towards $i$ if  $|a_{ij}|>1$.
 \item [(d)] If $a_{ij}a_{ji}>4$. the vertices $i$ and $j$ are connected by a bold-faced line equipped with an ordered pair of integers
 $|a_{ij}|$, $|a_{ji}|$.
 \item [(e)] If $n>4$, $i$ and $j$ are joined by a thick line on which we write $|A_{ij}|,|A_{ji}|$ with $|A_{ij}|\geq |A_{ji}|$ . This 
 $n>4$ case will only concern us when we discuss rank-2 hyperbolic  algebras.
\end{itemize}\end{dfn}
\begin{rmk}
It is clear that $A$ is indcomposable if and only if Dynkin diagram is a connected graph.
\end{rmk}
The the relationship between a $GCM$, a {\it Dynkin diagram}  and an {\it algebra} as in
 above relations is one to one.\\

\begin{dfn} ({\it Hyperbolic Kac-Moody Algebra}) \cite{kac:edition} A generlised Cartan matrix (GCM) $A$ is called a
{\it matrix of hyperbolic type} if it is indecomposable, symmetrizable of indefinite type and if every proper connected subdiagram of
Dynkin diagram of $A$ is of finite or affine type. The Kac-Moody algebra is called {\it hyperbolic Kac-Moody Algebra} if the GCM is of
hyperbolic type. \end{dfn}

\begin{dfn}  ({\it Dynkin diagram of hyperbolic Kac-Moody Algebras}) The general strategy in searching for hyperbolic
Dynkin diagrams of rank $r+1$ is as follows: \end{dfn}
\begin{enumerate}
 \item[(i)] Draw all possible Lie and / or affine (including semisimple) diagrams of rank $r$
 \item [(ii)]Add an extra root, trying all possible lengths
 \item [(iii)]Try connecting the new root to the old ones in all ways consistent with a symmetrizable $GCM$.
 \item [(iv)]Test the resulting diagram by removing any point to see whether it reduces to (perhaps a disconnected combinations of) known
finite or affine algebras, the twisted ones being included among the latter. A diagram that survives the test is of the hyperbolic
type.
\end{enumerate}
\section{Root system and relative root system}





Let $\omega'$ is the standard Cartan semi-involution, 
$\omega$ Chevalley Cartan involution  $\mathfrak{g}$, s.t. $w(h)=-h$, and $w(e_{i})=-f_{i}$ and $\tau$ is a involutive diagram automorphism.
Let $\sigma'$ be a semi-involution of the first kind and $\mathfrak{g}_\mathbb{R}$ be the corresponding
almost compact real form. and thus $\sigma=\sigma'\omega'=\omega'\sigma'$ is the Cartan involution of
$\mathfrak{g}_\mathbb{R}$. Hence $\mathfrak{h}_{\mathbb{R}}$ is a $\sigma$-stable maximally split Cartan
subalgebra of $\mathfrak{g}_\mathbb{R}$ and $\mathfrak{t}_{\mathbb{R}}={\mathfrak{h}_{\mathbb{R}}^{-\sigma}}$
is a $\sigma$ stable maximal compact toral subalgebra. The group $K={G_{\mathbb{R}}}^\sigma$ is transitive
on the set of $\sigma$ stable maximally compact Cartan subalgebra  (respectively $\sigma$-stable maximally 
compact toral subalgebra) of $\mathfrak{g}_\mathbb{R}$. The root space decomposition becomes
$$\mathfrak{g}=\mathfrak{h}\oplus\underset{\alpha\in\triangle}{\bigoplus}\mathfrak{g}_{\alpha}$$
$$Z_{\mathfrak{g}}(\mathfrak{t}_{0})=\mathfrak{h}\oplus\underset{\alpha\in\triangle_{real}}{\bigoplus}\mathfrak{g}_{\alpha}$$
and $\mathfrak{k}_{0}\cap Z_\mathfrak{g}(\mathfrak{t}_{0})=\mathfrak{k}_{0}\cap \mathfrak{h}=\mathfrak{t}_{0}$. Therefore 
$\mathfrak{t}_{0}$ is maximal abelian in $\mathfrak{k}_{0}$ and $\mathfrak{h}_{0}$ is maximally compact.

 We get the relative or restricted root system for $\alpha\in\triangle(\mathfrak{g},\mathfrak{h})$ by denoting
 $\alpha'=\alpha|\mathfrak{t}_\mathbb{R}$ the restriction of  $\alpha$ to $\mathfrak{t}_{\mathbb{R}}$.
 The (infinite) relative (or restricted) root system $\triangle'=
 \triangle(\mathfrak{g}_{\mathbb{R}},\mathfrak{t}_{\mathbb{R}})
=\{\alpha';\alpha\in\triangle(\mathfrak{g},\mathfrak{h})\}$\textbackslash{}$\{0\}$ and 

$$\mathfrak{h}_{\mathbb{R}}=\mathfrak{h}\cap \mathfrak{g}^{\sigma'}=\mathfrak{h}\cap \mathfrak{g}_\mathbb{R}$$

$$\mathfrak{t}_{\mathbb{R}}=\underset{i\in\Omega}{\bigoplus}\mathbb{R}(\alpha^{\vee}_{i}+\alpha^{\vee}_{\tau_{i}})$$
where $\Omega$ is the simple root corresponding to maximally compact Cartan subalgebras.

\subsection{The relative Kac-Moody matrix and  Dynkin diagram}

 A relative Kac-Moody matrix $B'=(\alpha_{j}',\alpha_{i}'^{\vee})=(b'_{i,j})$ which satisfies 
$b'_{i,j}\in\mathbb{Z}$, $b'_{i,i}\leq2$, $b'_{i,j}\in\mathbb{Z}^{-}$ for $i\neq j$, $b'_{i,j}=0$ iff $b'_{j,i}=0$.
The relative Kac-Moody matrix is associated a graph $S(B')$, with $|I'|$ vertices, called the relative Dynkin diagram as follows;
we associate each $i$ a vertex equipped with a cross if $b'_{i,i}=1$, with a $-$ sign if $b'_{i,i}<0$ and with $0$ if $b'_{i,i}=0$.
Two vertices $i$ and $j$ are linked iff $b'_{i,j}<0$; if $b'_{i,i}$ and $b'_{j,j}$ are positive and $a'_{i,j}a'_{j,i}=n_{i,j}\leq 4$,
the vertices $i$ and $j$ are joined by $m_{i,j}=$max $(|a'_{i,j}||a'_{j,i}|)$ line(s) with an arrow pointing towards
the vertex $i$ if $|a'_{i,j}|>1$. If $n_{i,j}\geq5$ or one of the two coefficient  $b'_{i,i}$ or $b'_{j,j}$ is non-positive,
the vertices $i$ and $j$ are joined by by a thick line on which we write $|a'_{i,j}|$ (besides the vertex $i$ ) and 
$|a'_{j,i}|$ (beside the vertex j). We use the same nomenclature as in \cite{Messaud:Hyperbolic} namely $Z_{-}$ stands
for $\bullet_{-}$

\section{Real form of  hyperbolic Kac-Moody algebras and  Vogan diagram }

We now  recall some terms associated with Kac-Moody algebras.\\

A real form of $\mathfrak{g}$ is an algebra $\mathfrak{g}_{\mathbb{R}}$ over $\mathbb{R}$ such that there exists an isomorphism from 
$\mathfrak{g}$ to $\mathfrak{g}_{\mathbb{R}}\otimes\mathbb{C}$. If we replace $\mathbb{C}$ with $\mathbb{R}$ in the definition of 
$\mathfrak{g}$, we obtain a real form $\mathfrak{g}_{\mathbb{R}}$ which is called split. A real form of $\mathfrak{g}$ correponds
to a semi-linear involution of $\mathfrak{g}$. A semi-linear involution is an automorphism $\tau$ of $\mathfrak{g}$ such that $\tau^{2}
=Id$ and $\tau(\lambda x=\overline{\lambda}\tau(x))$ for $\lambda\in\mathbb{C}$. A Borel subalgebra of $\mathfrak{g}$ is a 
maximal completely solvable subalgebra. There are two types of Borel subalgebra (positive or
negative). A linear or semi-linear automorphism of $\mathfrak{g}$ is said to be of the first kind if it transforms a Borel subalgebra
in $\mathfrak{g}$  to a Borel subalgebra of the same sign. A linear or semi-linear automorphism of $\mathfrak{g}$
is said to be of the second kind if it transforms a Borel subalgebra into a Borel subalgebra of the oppposite sign.

Let $\mathfrak{g}_{\mathbb{R}}$ be a real form of $\mathfrak{g}$ and fix an isomorphism from $\mathfrak{g}$ to
 $\mathfrak{g}_{\mathbb{R}\otimes\mathbb{C}}$. Then the Galois group $\Gamma = Gal(\mathbb{C}/\mathbb{R})$ acts on $\mathfrak{g}$
and the corresponding group $G$. A real form is the fixed point set $\mathfrak{g}^\Gamma$. If $\Gamma$ consists of first kind of
automorphism then $\mathfrak{g}_{\mathbb{R}}$  is almost split, otherwise if the non-trivial element of $\Gamma$ is a second kind 
automorphism the $\mathfrak{g}_{\mathbb{R}}$ is almost compact.
 We consider 
 \begin{enumerate}
 \item The semi-involutions $\sigma'$, of the second kind of $\mathfrak{g}$.
 \item The involution $\theta$, of the first kind of  $\mathfrak{g}$.
 \end{enumerate}
\begin{thm} \cite{rousseu}  The relation $\sigma\approx\theta$ if and only if
 \begin{enumerate}
 \item[({\it a})] $\omega'=\theta\sigma'=\sigma'\theta$ is a Cartan semi-involution.
 \item [({\it b})] $\theta$ and $\sigma'$ stabilize the same Cartan subalgebra $\mathfrak{h}$.
 \item [({\it c})] $\mathfrak{h}$ is contained in a minimal $\sigma'$-stable positive parabolic subalgebra.
\end{enumerate}
\end{thm}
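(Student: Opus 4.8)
The plan is to read the relation $\sigma\approx\theta$ as the statement that $\theta$ is (the complexification of) a Cartan involution of the almost compact real form $\mathfrak{g}_{\mathbb{R}}=\mathfrak{g}^{\sigma'}$, and to establish the equivalence by adapting the finite-dimensional argument of Knapp \cite{Knapp:Liegroups} to the present setting, replacing the compactness and averaging arguments (unavailable in infinite dimensions) by the Kac--Moody data recalled above: the invariant bilinear form, the root space decomposition $\mathfrak{g}=\mathfrak{h}\oplus\bigoplus_{\alpha}\mathfrak{g}_{\alpha}$, and the $\sigma'$-stable parabolic subalgebras. I would prove the two implications separately, keeping in mind throughout that $\sigma'$ is semilinear while $\theta$ is linear, so that every composite of the form $\theta\sigma'$ is again semilinear.

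For the direction $\sigma\approx\theta\Rightarrow(a),(b),(c)$, I would start from the Cartan decomposition $\mathfrak{g}_{\mathbb{R}}=\mathfrak{k}_{\mathbb{R}}\oplus\mathfrak{p}_{\mathbb{R}}$ into the $\pm1$ eigenspaces of $\theta$. A direct eigenspace computation then gives (a): since $\sigma'$ is the identity on $\mathfrak{g}_{\mathbb{R}}$ while $\theta$ is $+1$ on $\mathfrak{k}_{\mathbb{R}}$ and $-1$ on $\mathfrak{p}_{\mathbb{R}}$, the semilinear operators $\theta\sigma'$ and $\sigma'\theta$ agree on the real span $\mathfrak{g}_{\mathbb{R}}$, hence coincide on all of $\mathfrak{g}$, and their common value $\omega'$ fixes exactly $\mathfrak{k}_{\mathbb{R}}\oplus i\mathfrak{p}_{\mathbb{R}}$, the compact real form; thus $\omega'$ is a Cartan semiinvolution and $\theta$, $\sigma'$ commute. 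For (b) I would take $\mathfrak{h}$ to be the complexification of a maximally compact $\theta$-stable Cartan subalgebra $\mathfrak{h}_{0}$ of $\mathfrak{g}_{\mathbb{R}}$; such $\mathfrak{h}_{0}$ exists by the remarks preceding the Iwasawa decomposition, where $\mathfrak{t}_{0}$ is maximal abelian in $\mathfrak{k}_{0}$ and $\mathfrak{h}_{0}$ is maximally compact, and it is automatically $\sigma'$-stable. For (c) I would translate maximal compactness of $\mathfrak{h}_{0}$ into the parabolic condition: minimality of the $\sigma'$-stable positive parabolic containing $\mathfrak{h}$ is precisely the statement that the split part of $\mathfrak{h}$ is as small as possible, which is the maximally compact condition expressed through the relative root system $\triangle'$.

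For the converse, I would assume (a), (b), (c) and recover that $\theta$ is a Cartan involution. From (a) the fixed subalgebra $\mathfrak{g}^{\omega'}$ is a compact real form; combined with the commuting relation $\theta\sigma'=\sigma'\theta$, this shows that $\theta$ preserves $\mathfrak{g}_{\mathbb{R}}=\mathfrak{g}^{\sigma'}$ and that the decomposition of $\mathfrak{g}_{\mathbb{R}}$ into the $\pm1$ eigenspaces of $\theta$ is a Cartan decomposition, the requisite sign properties being read off from the compactness of $\mathfrak{g}^{\omega'}$ through the invariant form. Conditions (b) and (c) then guarantee that the Cartan subalgebra is the maximally compact one, so that the pair $(\mathfrak{h},\prod)$ is the correct datum underlying the association; the uniqueness of this configuration up to $K$-conjugacy follows from the transitivity of $K=G_{\mathbb{R}}^{\sigma}$ on the $\sigma$-stable maximally compact Cartan subalgebras, already recorded above.

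The main obstacle is precisely the infinite-dimensionality of $\mathfrak{g}$: the invariant form is indefinite and $G_{\mathbb{R}}$ is not compact, so the classical existence-and-uniqueness argument for Cartan involutions cannot be imported verbatim. The delicate point is to show that the compactness encoded in (a), together with the positioning encoded in (b)--(c), pins down $\theta$ up to conjugacy; this is where the relative root system and the parabolic machinery of \cite{Messaud:Hyperbolic} must do the work that compactness does in the finite-dimensional theory, and I would lean on the $K$-transitivity statement to close this gap.
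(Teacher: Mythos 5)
The first thing you should know is that the paper contains no proof of this statement to compare against: the theorem is quoted verbatim from Rousseau--Ben Messaoud \cite{rousseu}, and the remark immediately following it merely records that the result is proved (for affine Kac--Moody algebras) in that reference. So your proposal can only be judged on its own terms, and on those terms it has genuine gaps rather than being a complete alternative proof.

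The first gap is a circularity in your forward direction. In the Kac--Moody setting the notion of ``Cartan involution'' of an almost compact real form is \emph{defined} by the requirement that $\theta\sigma'=\sigma'\theta$ be a Cartan semiinvolution (i.e.\ conjugate to the standard one $\omega'$); the classical finite-dimensional definition via positive definiteness of the form $B_{\theta}$ is unavailable, as you yourself observe. Under the Kac--Moody definition, your eigenspace computation for (a) establishes nothing, because (a) is then part of the hypothesis encoded in $\sigma\approx\theta$; under the classical reading, your computation shows only that $\omega'$ fixes $\mathfrak{k}_{\mathbb{R}}\oplus i\mathfrak{p}_{\mathbb{R}}$, and calling that subalgebra ``the compact real form'' is precisely the point that cannot be certified in infinite dimensions --- there is no compact group and no averaging, so one must instead prove that $\omega'$ is $Aut(\mathfrak{g})$-conjugate to the standard Cartan semiinvolution, which is the hard content you have not supplied. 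The second and more serious gap is in the converse and in uniqueness: you close both by invoking the transitivity of $K=G_{\mathbb{R}}^{\sigma}$ on $\sigma$-stable maximally compact Cartan subalgebras, and the equivalence of condition (c) with maximal compactness of $\mathfrak{h}^{\sigma'}$. But these are exactly the facts imported into this paper from \cite{Messaud:Hyperbolic} and \cite{rousseu}; they are downstream consequences of the correspondence $\sigma'\leftrightarrow\theta$ and of the building-theoretic conjugacy machinery used to prove it in the source. Leaning on them to prove the theorem is circular. A genuine proof must construct the correspondence from scratch --- via conjugacy of $\sigma'$-stable parabolic subalgebras and fixed-point arguments on the associated building, which is what Rousseau and Ben Messaoud actually do --- and nothing in your sketch substitutes for that machinery.
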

\begin{rmk} The above theorem is proved in (\cite{rousseu2} {Corollary 7.7}) for affine Kac-Moody algebras.
Since the minimal parabolic subalgebra $\mathfrak{p}$ ,
satisy $\sigma'(\mathfrak{p})\cap \mathfrak{p}=\mathfrak{h}$.
The condition $(c)$ in the theorem is equivalent to require that $\mathfrak{h}^{\sigma'}$ is a maximally
compact Cartan subalgebras. Then this relation induces a bijection between the conjugacy classes under 
$Aut(\mathfrak{g})$ of semi-involutions of the second 
kind and conjugacy classes of involutions of the first kind.\end{rmk}

\section{ Vogan diagram}
Let $\mathfrak{g}_\mathbb{R}$ be an almost compact real form of $\mathfrak{g}$. Let $\sigma$ be the Cartan involution and 
$\mathfrak{g}_\mathbb{R}=\mathfrak{k}_{0}\oplus\mathfrak{p}_0$ be the correspoding Cartan decomposition. Let $\mathfrak{h}_{0}=
\mathfrak{t}_{0}\oplus\mathfrak{a}_{0}$ be a maximally compact $\sigma$-stable Cartan subalgebra of  $\mathfrak{g}_\mathbb{R}$,
with complexification $\mathfrak{h}=\mathfrak{t}\oplus\mathfrak{a}$

The roots of $(\mathfrak{g},\mathfrak{h})$ are imaginary on $\mathfrak{t}_0$ and real on $\mathfrak{a}_0$. A root is real 
if it takes real values on $\mathfrak{a}_0$.
A root is real if it takes real values  on $\mathfrak{h}_0$ (i.e., vanishes on $\mathfrak{t}_0$), imaginary if it takes 
purely imaginary values on  $\mathfrak{h}_0$ (i.e., vanishes on $\mathfrak{a}_0$) and  complex otherwise.

For any root $\alpha$, $\sigma\alpha$ is the root $\sigma\alpha (H)=\alpha(\sigma ^{-1}H)$. If $\alpha$ is imaginary, then
$\sigma\alpha=\alpha$. Thus $\mathfrak{g}_\alpha$ is $\sigma$-stable, and we have 
$\mathfrak{g}_\alpha=(\mathfrak{g}_\alpha\cap\mathfrak{t})\oplus(\mathfrak{g}_\alpha\cap\mathfrak{p})$.
Since $\mathfrak{g}_\alpha$ is one
dimensional, $\mathfrak{g}_\alpha\subseteq\mathfrak{t}$ or $\mathfrak{g}_\alpha\subseteq\mathfrak{p}$. 
We call an imaginary root $\alpha$ 
compact if    $\mathfrak{g}_\alpha\subseteq\mathfrak{t}$ , noncompact if $\mathfrak{g}_\alpha\subseteq\mathfrak{p}$.
Let $\mathfrak{h}_0$ be a $\sigma-$ stable Cartan subalgebra of $\mathfrak{g}_\mathbb{R}$. 
Then there are no real roots if and only if 
$\mathfrak{h}_0$ is maximally compact [1, Proposition 3.8].

Let $\triangle=\triangle(\mathfrak{g},\mathfrak{h})$ be the set of roots. There are no real roots, i.e.,
 no roots that vanishes on $\mathfrak{t}$. We choose a positive system $\triangle^{+}$ for $\triangle$ that takes
$i\mathfrak{t}_0$ before $\mathfrak{a}_0$. Since $\sigma$ is
$+1$ on   $\mathfrak{t}_0$ and $-1$ on $\mathfrak{a}_0$ and since there are no real roots,
$\sigma(\triangle^{+})=\triangle^{+}$. Therefore
$\sigma$ permutes the simple roots. It must fix the simple roots that are imaginary and permutes in two cycles the simple roots
that are complex.
\begin{dfn}
 An abstract Vogan diagram is said to be an abstract Dynkin diagram with additional structures as follows:
 \begin{enumerate}
  \item Automorphisms of order one or two which is  labeled by 2-element orbits.
  \item Choose a subset of 1-element orbits and paint the vertices  corresponding  member of the subset.  
 \end{enumerate}
\end{dfn}

By the Vogan diagram of the triple $(\mathfrak{g},\mathfrak{h}_{0},\triangle^{+})$, we mean the Dynkin diagram of 
of $\triangle^{+}$ with the two-element orbits under $\sigma$ ({\it diagram automorphism}) labeled by an arrow and with 
the one-element orbits painted or not,
according to the corresponding imaginary simple root, noncompact or compact.\\

Here we reformulate the Borel and de Siebenthal Theorem for hyperbolic Kac-Moody algebras of regular simply laced Dynkin diagrams
and others in general separately.\\

\begin{thm} (Borel and de Siebenthal 1)  Every equivalence class of  over extended (regular simply laced hyperbolic)
Kac-Moody algebra can be represented by a Vogan diagram with at most one painted vertex. \end{thm}
\begin{proof}
It is well known that the equivalence class of Vogan diagram of 
an extended Dynkin diagram $D$, with an extra vertex $p$ with extra edge consist of at most two painted vertices.
The over extended Dynkin diagram (for every regular  simply laced diagrams) consists of an extra vertex with an extra edge
inaddition to at most two painted vertices in affine diagrams. Thus we obtain a painting with at most three painted vertices. 
But  using the  $F(i)$ algorithm and diagram automorphism we get a painting with at most one painted vertex for every
regular simply laced hyperbolic Kac-Moody algebras.\end{proof}
 
The following Corollary is immediate.

\begin{cor}
 If a connected graph $D$ is a hyperbolic Dynkin diagram (simply laced), then
 \begin{itemize}
  \item [(a)] every painting on $D$ can be simplified via a sequence of $F<i>$ to a painting with single painted vertex.
  \item [(b)]every connected sub-graph of $D$ satisfies property $(a)$.
 \end{itemize}
\end{cor}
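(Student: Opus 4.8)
The plan is to read part (a) as a direct consequence of the Borel--de Siebenthal theorem for hyperbolic diagrams established immediately above, and to obtain part (b) by exploiting the defining property of hyperbolic type: every proper connected subdiagram of $D$ is of finite or affine type.

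First, for part (a), I would observe that since $D$ is a hyperbolic Dynkin diagram, any painting carried by $D$ is a (very extended) hyperbolic Vogan diagram in the sense of the preceding theorem. That theorem asserts that each such equivalence class has a representative with at most one painted vertex, and its proof produces this representative by iterating the $F(i)$ algorithm of Chuah \cite{Chuah:equivalence} together with diagram automorphisms. Hence every painting on $D$ is reduced, through a finite sequence of $F(i)$ moves, to a painting with a single painted vertex. Here I would make explicit that each $F(i)$ move preserves the equivalence class, being a combinatorial shadow of a Cayley-type conjugation that fixes the underlying almost compact real form, so the reduced diagram represents the same real form.

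Next, for part (b), let $D'$ be any connected subgraph of $D$. If $D' = D$ then $D'$ is itself hyperbolic and part (a) applies directly. Otherwise $D'$ is a \emph{proper} connected subdiagram, and by the definition of hyperbolic type $D'$ is of finite or affine type. In the finite case the classical Borel--de Siebenthal theorem of Knapp \cite{Knapp:Liegroups}, carried out combinatorially by the same $F(i)$ algorithm, gives a representative with at most one painted vertex; in the affine case the corresponding reduction for nontwisted and twisted affine Kac--Moody algebras due to Batra \cite{batra:affine} and Tanushree \cite{tanushree:twisted} supplies the identical conclusion. In every case a painting on $D'$ is simplified to at most one painted vertex by a sequence of $F(i)$ moves, which is precisely property (a) for $D'$.

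The step I expect to require the most care is verifying that the $F(i)$ operation is genuinely \emph{local}: that it flips only the color of the vertex $i$ and of the vertices adjacent to $i$, so that its action on the restriction of a painting to $D'$ coincides with its action inside $D$, and so that the reducing sequence for $D'$ never needs to invoke a vertex outside $D'$. Once this locality is secured, the existence of a reducing sequence depends only on the isomorphism type of $D'$ (finite, affine, or hyperbolic), and the trichotomy of subdiagram types forced by hyperbolicity closes the argument for both (a) and (b).
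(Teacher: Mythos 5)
Your part (a) is essentially what the paper intends (the paper's own ``proof'' is a single sentence deferring to the $F\langle i\rangle$ sequences, i.e.\ to Theorem 6.2), so there is little to compare there beyond noting that, as in the theorem, the correct conclusion is ``at most one'' rather than ``exactly one'' painted vertex, since the all-unpainted diagram admits no $F\langle i\rangle$ move at all and is alone in its class. The problem is part (b). Your trichotomy is the right starting point -- a proper connected subdiagram $D'$ of a hyperbolic diagram is of finite or affine type -- and the locality of $F\langle i\rangle$ that you flag is indeed needed and unproblematic. But the affine branch of your argument is false: the results you invoke do \emph{not} give reduction to at most one painted vertex for affine diagrams. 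The paper itself, in the proof of Theorem 6.2, cites Chuah--Hu to the effect that equivalence classes of \emph{extended} (affine) Vogan diagrams only have representatives with at most \emph{two} painted vertices, and this bound is sharp.

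A concrete counterexample to your affine step: take $D'$ to be a cycle, i.e.\ an affine diagram of type $A_{n}^{(1)}$, which does occur as a proper connected subdiagram of hyperbolic diagrams (for instance the triangles inside the rank~4 diagram of the paper's Example~3). On a simply-laced cycle every move $F\langle i\rangle$ keeps $i$ painted and flips exactly its two neighbours, so the parity of the number of painted vertices is an invariant of the equivalence class (diagram automorphisms preserve it as well). Hence the painting of a triangle with two painted vertices is equivalent only to the three paintings with exactly two painted vertices, and can never be simplified to a single painted vertex or to the empty painting. So ``property (a) for every connected subgraph'' cannot be obtained by quoting the finite and affine classification results, and your proof of (b) has a genuine gap at exactly this point; any correct treatment would have to restrict the class of subgraphs or weaken the conclusion in the affine case. (For what it is worth, the paper's own proof -- ``straightforward using the $F\langle i\rangle$ sequences'' -- supplies no argument here either, so this gap is in the statement's justification itself, not merely in your reconstruction of it.)
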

\begin{proof}
 The proof of the corollary is straightforward using the $F<i>$ sequences.
\end{proof}
We also construct the Borel and de Siebenthal theorem  including non-simply laced Dynkin diagrams.
\begin{thm} (Borel and de Siebenthal 2) 
Every equivalence class of  over extended (hyperbolic)
Vogan diagrams has a representative with at most three  vertex painted.\end{thm}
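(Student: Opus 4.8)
The plan is to transfer Knapp's proof of the Borel--de Siebenthal theorem for Vogan diagrams of finite-dimensional real forms to the hyperbolic setting, in the same spirit in which Batra and Tanushree handled the affine and twisted affine cases. First I would dispose of the two-element orbits: since only the one-element orbits (the $\sigma$-fixed, imaginary simple roots) can be painted, and every Vogan-diagram equivalence carries arrows to arrows, it suffices to prove the statement after restricting attention to the subdiagram $D_0$ of $\sigma$-fixed vertices. On $D_0$ the datum is just a painting, i.e.\ a map from the vertices to $\{0,1\}$ recording compact ($0$) versus noncompact ($1$) imaginary simple roots, and $\sigma$ may be written as $\sigma=\mathrm{Ad}(\exp(\pi i H))$ with $H\in\mathfrak{t}$ satisfying $\alpha_j(H)\equiv p_j \pmod 2$, where $p_j\in\{0,1\}$ is the painting of $\alpha_j$.

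The core tool is a single flip operation realizing an equivalence. Given a painted simple root $\alpha_i$ (so $p_i=1$), conjugating $\sigma$ by the Weyl reflection $s_{\alpha_i}$ replaces $H$ by $s_{\alpha_i}H$, and since $p_j'=\alpha_j(s_{\alpha_i}H)\equiv p_j+\langle\alpha_j,\alpha_i^\vee\rangle\,p_i \pmod 2$, the new painting differs from the old exactly by toggling the colour of those neighbours $\alpha_j$ of $\alpha_i$ for which the Cartan integer $\langle\alpha_j,\alpha_i^\vee\rangle$ is odd, while $\alpha_i$ itself remains painted. I would first verify that this is a genuine equivalence of Vogan diagrams: it is the composite of an inner automorphism with the change of positive system $\triangle^+\mapsto s_{\alpha_i}\triangle^+$, and both operations preserve the isomorphism class of $\mathfrak{g}_{\mathbb{R}}$ through the correspondence $\sigma\approx\theta$ recalled above. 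This yields the usable combinatorial rule for moving paint around $D_0$.

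With the flip in hand I would run a reduction algorithm. If at most one vertex of $D_0$ is painted there is nothing to prove; otherwise choose a proper connected subdiagram $D'\subseteq D_0$ containing at least two painted vertices. By the definition of a hyperbolic GCM, every such $D'$ is of finite or affine type, so Knapp's finite-dimensional theorem and Batra's affine extension apply to $D'$ and, through a sequence of flips, reduce the number of painted vertices inside $D'$ to at most one. Iterating this subdiagram-by-subdiagram, together with an induction on the rank and on the total number of painted vertices, drives the global painting down to a single vertex; the two-element-orbit structure set aside at the start is reinstated unchanged, since flips at $\sigma$-fixed roots commute with $\sigma$.

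The main obstacle is termination. In the finite case one controls the algorithm by pairing the painting against the marks of the highest root, and in the affine case against the marks of the null root; a hyperbolic algebra has indefinite type, an infinite Weyl group, and no distinguished highest or null root, so no such global monovariant is available directly. My resolution is precisely the reduction to proper subdiagrams: hyperbolicity guarantees that each proper connected subdiagram is finite or affine, where a monovariant does exist, so the potentially non-terminating global flips are replaced by finitely many terminating local reductions. The remaining delicate point, which I would treat with care, is the bookkeeping at the overextension vertices where two such subdiagrams overlap: one must check that reducing the painting on one subdiagram does not reintroduce painted vertices on another faster than they are removed, which is exactly why the induction must be run on the number of painted vertices and not on the rank alone.
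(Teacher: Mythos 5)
Your setup of the flip operation $F[i]$ is sound, and you correctly identify termination as the central obstacle in the hyperbolic setting; but the proposed resolution --- localizing the reduction to proper connected subdiagrams --- does not actually close the gap, for two concrete reasons. First, a flip at a painted vertex $i$ of a subdiagram $D'$ toggles the colours of \emph{all} neighbours of $i$ in the full diagram $D$, not only those inside $D'$. So when you invoke the finite or affine theorem ``inside $D'$'', every flip in that reduction performed at a vertex adjacent to $D\setminus D'$ leaks paint outside $D'$, and the total number of painted vertices in $D$ can increase during a single local reduction. Your induction therefore has no monovariant; the sentence ``one must check that reducing the painting on one subdiagram does not reintroduce painted vertices on another faster than they are removed'' is precisely the hard content of the theorem, and it is deferred rather than proved. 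Second, the very first step of the algorithm can fail outright: if $D$ is a path (as several rank~3 hyperbolic diagrams in this paper are, e.g.\ $HA_{2}^{(2)}$) with exactly its two end vertices painted, then no \emph{proper} connected subdiagram contains both painted vertices, so there is nothing to which the finite/affine results can be applied; the reduction in that case is genuinely global (flip at an end to paint the middle vertex, then flip at the middle to erase both ends), which is exactly the kind of step your scheme excludes.

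For comparison, the paper's proof exploits the specific structure of a very extended diagram rather than arbitrary proper subdiagrams: deleting the overextension vertex leaves an extended (affine) diagram, to which the theorem of \cite{Chuah:vogan-extended} applies, yielding a representative with at most two painted vertices there; since those flips can at worst toggle the one extra vertex, the whole diagram reaches a painting with at most three painted vertices, and the paper then asserts that a further application of the $F(i)$ algorithm brings this down to at most one. This sidesteps your second problem entirely (the affine subdiagram is canonical and contains all but one vertex) and keeps the leakage controlled to a single vertex. The paper's final reduction from three painted vertices to one is itself only asserted and illustrated by examples such as $(9,8,6)\sim(8,7)\sim(0,7)\sim(0)$, so neither argument is complete at that last step --- but your proposal reproduces that gap in a harder form rather than filling it, since you must control leakage across arbitrary subdiagram boundaries instead of at one vertex.
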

\begin{proof} The equivalence class of Vogan diagram of 
an extended Dynkin diagram $D$, with an extra vertex $p$ with extra edge consist of at most two painted vertices.
The over extended Dynkin diagram  consists of a over extra vertex with very extra edge,
so together with at most two painted vertices of affine diagrams we obtain a painting with at most three painted vertices.
Since in some hyperbolic non simply laced diagrams  contains three directed arrows towards short roots which 
cannot be equivalent to double or single painted diagrams by $F\langle i\rangle$ algorithm.
 \end{proof}

\subsection{ Equivalence classes of Vogan diagrams}

Batra \cite{batra:affine} defined equivalence of Vogan diagrams by defining
equivalence relations generated
by the following two operations:

(a) Application of an automorphism of the Dynkin diagram. 

(b) Change in the positive system by reflection in a simple, noncompact root,
i.e., by
a vertex which is colored in the Vogan diagram.\\

 As a consequence of reflection
by a simple,
noncompact root $\alpha$, the rule for single and triple lines is that we leave
$\alpha$
colored and its immediate neighbour is changed to the opposite color. The rule
for double
lines is that if  $\alpha$ is the smaller root, then there is no change in the
color
of its immediate neighbour, but we leave  $\alpha$  colored. If $\alpha$ is the
larger
root, then we leave $\alpha$ colored and its immediate neighbour is changed to
the opposite
color. If two Vogan diagrams are not equivalent to each other, they are called
nonequivalent. 

By labeling vertices with $1,\cdots,n$ as in  \cite{Chuah:equivalence,Chuah:quick}
a Vogan diagram with painted vertices $ i_1,\cdots,i_k$,  $1\leq i_1\leq
\cdots < i_k\leq n$, is denoted by $( i_1,\cdots,i_k)$.
Suppose $i\in {\{ i_1,\cdots,i_k\}}$, so that $i$ is a painted vertices. The $F\langle i\rangle$ 
operates on the Vogan diagram as follows. It acts on the root system by
reflection corresponding to the noncompact simple root $i$, 
as a result it leads an equivalent Vogan diagram. 
A combinatorial description for two Vogan diagrams to be equivalent with
$F\langle i\rangle$ operation is as follows
\begin{itemize}
 \item [(i)]The color of $i$ and all vertices not adjacent to $i$ remain unchanged.
 \item [(ii)]If $j$ is joined to $i$ by a double edge and $j$ 's long, the color
of $j$ remains unchanged.		
 \item [(iii)]Apart from the above exceptions, the colors of all vertices
adjacent to $i$, are reversed.
\end{itemize}

For Exceptional Dynkin diagram like $E_{10}$, we get the following Lemma from \cite{Chuah:vogan-extended}\\

\begin{lem} (\cite{Chuah:vogan-extended}, Lemma 3.8 )
\begin{itemize}
 \item [(a)] {\it For $q\geq4$ and $p=2,3$, we get $(p,q)\sim(0,p-1,q-1)$ and
$(0,p,q)\sim(p-1,q-1)$}
 \item [(b)] {\it For $q\geq4$, $(1,q)\sim(0,q-1)$ and $(0,1,q)\sim(q-1)$}
\end{itemize}\end{lem}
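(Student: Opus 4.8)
The plan is to prove each of the four equivalences by writing down an explicit word in the reflection operators $F[i]$ and checking its effect on the painting against the combinatorial rules (i)--(iii) recorded just above the lemma. Since the hyperbolic diagram in question ($E_{10}$ and its connected subdiagrams) carries no nontrivial diagram automorphism, I expect to use only the operators $F[i]$ and never the automorphism operation. Two elementary moves, both immediate from rules (i)--(iii) along a simply-laced chain $\cdots - (a{-}1) - a - (a{+}1) - \cdots$, will serve as building blocks: a \emph{slide} $(a,a{+}1)\sim(a{+}1,a{+}2)$, obtained by applying $F[a{+}1]$ (the pivot $a{+}1$ stays painted while its neighbours $a$ and $a{+}2$ switch colour), and a \emph{collapse} $(a{-}1,a,a{+}1)\sim(a)$, obtained by applying $F[a]$ (both neighbours of the painted pivot switch off). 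Composing slides moves an adjacent painted pair freely along a leg, and one collapse removes a painted triple down to its centre.

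For part (b) I would argue on a leg $0 - 1 - 2 - \cdots$ with $0$ an endpoint. For $(0,1,q)\sim(q-1)$, start from the adjacent pair $\{0,1\}$ together with the isolated vertex $\{q\}$, apply $F[1],F[2],\dots,F[q-2]$ to slide the pair rightward into $\{q-2,q-1\}$, producing the triple $\{q-2,q-1,q\}$, and finish with $F[q-1]$ to collapse it to $\{q-1\}$. For $(1,q)\sim(0,q-1)$, first apply $F[1]$ to reach $\{0,1,2,q\}$, which paints the endpoint $0$, and then $F[2]$ to produce the sliding pair $\{2,3\}$ beside the now-fixed $\{0\}$ and $\{q\}$; sliding this pair to $\{q-2,q-1\}$ and collapsing the resulting triple at $q$ lands on $\{0,q-1\}$. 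In both cases the hypothesis $q\ge 4$ is exactly what guarantees that the moving pair never meets the isolated $\{q\}$ before the final collapse, so that each intermediate $F[i]$ is applied at a genuinely noncompact (painted) vertex.

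Part (a) is where the trivalent branch vertex enters. The two sides have painting numbers of opposite parity ($2$ versus $3$), so some $F$ must be applied at a vertex with an odd number of neighbours; on these diagrams that is the branch node (or an endpoint leaf), where rule (iii) reverses an odd number of colours. The plan is to apply $F$ at the branch node once to create (respectively absorb) the extra painted short-leg vertex $0$, and then reduce the remaining configuration to the target by the same slide-and-collapse mechanism as in part (b); the two cases $p=2$ and $p=3$ differ only in how many slide steps precede the final collapse, and again $q\ge 4$ keeps the branch-end cluster and the $q$-cluster from interfering. The companion relation $(0,p,q)\sim(p-1,q-1)$ follows from an entirely analogous $F$-word, now beginning from the configuration with the short leg painted.

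The main obstacle will be the bookkeeping at the interface between the two clusters and at the branch node: one must verify at every step that the pivot vertex is currently painted (otherwise $F[i]$ is undefined), that sliding one cluster never flips a vertex belonging to the other, and that the single odd-parity move at the branch node is inserted at the correct stage. This is precisely where the separation hypothesis $q\ge 4$ and the case distinction $p=2,3$ are needed, and it is the portion of the argument I would write out most carefully.
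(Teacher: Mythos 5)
The paper offers no proof of this lemma at all --- it is imported verbatim from Chuah--Hu (\cite{Chuah:vogan-extended}, Lemma 3.8) --- so your attempt must be judged on its own terms, and it contains a concrete structural error. The diagram on which this lemma is used (see the $E_{10}$ labeling in the paper, and in particular the step $(2)\sim(0,1,4)$ by $F\langle 2,3,0\rangle$ in the proof of the $E_{10}$ proposition, which forces $0$ to be adjacent to $3$) is the chain $1-2-3-4-\cdots-9$ with the vertex $0$ attached to the \emph{branch node} $3$; in particular $0$ is not adjacent to $1$. Your part (b) is argued ``on a leg $0-1-2-\cdots$ with $0$ an endpoint,'' i.e.\ with $\{0,1\}$ an adjacent pair, and that is false. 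Every relation in (b) changes whether $0$ is painted, and on the true diagram the only reflections that can toggle $0$ are $F[3]$ (or $F[0]$ itself), so (b) cannot take place inside a simply-laced chain as you assume: $F[1]$ applied to $(1,q)$ yields $(1,2,q)$, not $(0,1,2,q)$ as you claim, so your intermediate paintings are wrong from the first step. By a fortunate cancellation your words $F\langle 1,2,\dots,q-1\rangle$ do still carry $(1,q)$ to $(0,q-1)$ and $(0,1,q)$ to $(q-1)$ on the true diagram --- the vertex $0$ gets toggled exactly once either way, but at the $F[3]$ step rather than the $F[1]$ step --- yet the verification you give does not establish this. Note also that your write-up is internally inconsistent: in part (a) you correctly call $0$ the ``painted short-leg vertex'' hanging off the branch node, which contradicts the picture you use throughout part (b).

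Part (a), moreover, is a plan rather than a proof: no explicit word is produced, and the ``bookkeeping at the interface'' that you defer is precisely the entire content of the statement. Once the diagram is read correctly the words are short, and one pass of $F$ through the branch node does all the work, because $F[3]$ toggles $0$ whatever its current color while simultaneously creating the sliding pair: $F\langle 3,4,\dots,q-1\rangle$ sends $(3,q)\mapsto(0,2,q-1)$ and $(0,3,q)\mapsto(2,q-1)$, and $F\langle 2,3,4,\dots,q-1\rangle$ sends $(2,q)\mapsto(0,1,q-1)$ and $(0,2,q)\mapsto(1,q-1)$; after the branch step, your slide moves push the pair down the long arm and the collapse at $q-1$ finishes, with $q\geq 4$ guaranteeing every pivot is painted when used. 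The same template $F\langle 1,2,3,\dots,q-1\rangle$ proves both relations of (b). So your slide-and-collapse machinery is the right tool, but the proof needs (b) redone on the correct diagram and (a) actually written out; as submitted, the argument verifies the moves on a diagram to which the lemma does not refer.
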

Applying all these technique in our hand, now we proceed to find the Vogan diagrams of some algebras which are
important in respect of physical applications
in string theory, High eneregy physics etc. We start with $E_{10}$.\\

\noindent{\bf Example-1} Dynkin diagram of hyperbolic Kac-Moody algebra $E_{10}$.\\

 The Dynkin diagram of $E_{10}$ is given as.\\
 
 \vspace{0.5cm}
\begin{picture}(60,30) \thicklines
\put(7,0){\circle{8}}\put(34,0){\circle{8}}\put(61,0){\circle{8}}\put(88,0){\circle{8}}\put(115,0){\circle{8}}\put(142,0){\circle{8}}
\put(169,0){\circle{8}}\put(196,0){\circle{8}}\put(223,0){\circle{8}} 
\put(169,27){\circle{8}}
\put(11,0){\line(1,0){19}}\put(38,0){\line(1,0){19}}\put(65,0){\line(1,0){19}}\put(92,0){\line(1,0){19}}\put(119,0){\line(1,0){19}}
\put(146,0){\line(1,0){19}}\put(173,0){\line(1,0){19}}\put(200,0){\line(1,0){19}}
\put(169,4){\line(0,1){19}}
 \put(7,-15){\makebox(0,0){$\alpha_{-1}$}} \put(34,-15){\makebox(0,0){$\alpha_{0}$}}\put(61,-15){\makebox(0,0){$\alpha_{7}$}}
 \put(88,-15){\makebox(0,0){$\alpha_{6}$}}\put(115,-15){\makebox(0,0){$\alpha_{5}$}}\put(142,-15){\makebox(0,0){$\alpha_{4}$}}
 \put(169,-15){\makebox(0,0){$\alpha_{3}$}}\put(196,-15){\makebox(0,0){$\alpha_{2}$}}\put(223,-15){\makebox(0,0){$\alpha_{1}$}}
 \put(169,40){\makebox(0,0){$\alpha_{8}$}}
\end{picture}
\vspace{1.5cm}

$\alpha_{0}$ and $\alpha_{-1}$ roots correspond to affine and hyperbolic extension
of simple Lie algebra $E_{8}$ respectively.
Enumerating the vertices of  $E_{10}$ differently as shown below we get the Dynkin diagram and the following proposition.\\

\begin{picture}(60,30) \thicklines
\put(7,0){\circle{8}}\put(34,0){\circle{8}}\put(61,0){\circle{8}}\put(88,0){\circle{8}}\put(115,0){\circle{8}}\put(142,0){\circle{8}}
\put(169,0){\circle{8}}\put(196,0){\circle{8}}\put(223,0){\circle{8}} 
\put(169,27){\circle{8}}
\put(11,0){\line(1,0){19}}\put(38,0){\line(1,0){19}}\put(65,0){\line(1,0){19}}\put(92,0){\line(1,0){19}}\put(119,0){\line(1,0){19}}
\put(146,0){\line(1,0){19}}\put(173,0){\line(1,0){19}}\put(200,0){\line(1,0){19}}
\put(169,4){\line(0,1){19}}
 \put(7,-15){\makebox(0,0){$9$}} \put(34,-15){\makebox(0,0){$8$}}\put(61,-15){\makebox(0,0){$7$}}
 \put(88,-15){\makebox(0,0){$6$}}\put(115,-15){\makebox(0,0){$5$}}\put(142,-15){\makebox(0,0){$4$}}
 \put(169,-15){\makebox(0,0){$3$}}\put(196,-15){\makebox(0,0){$2$}}\put(223,-15){\makebox(0,0){$1$}}
 \put(169,40){\makebox(0,0){$0$}}
\end{picture}
\vspace{1.5cm}

\begin{prop}
The equivalence classes of Vogan diagram of  $E_{10}$ are given by
\begin{enumerate}
\item $1\sim5\sim\left(0,4\right)\sim\left(0,9\right)\sim\left(0,8\right)$
\item
$2\sim3\sim7\sim8\sim\left(0,7\right)\sim\left(0,
6\right)\sim0\sim4\sim6\sim9\sim\left(0,3\right)\sim\left(0,1\right)\sim\left(0,
2\right)\sim\left(0,5\right)\sim\left(0,7\right)$\end{enumerate}\end{prop}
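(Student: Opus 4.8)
The plan is to classify the paintings of the $E_{10}$ diagram up to the equivalence generated by the moves $F[i]$, exploiting that $E_{10}$ is simply laced and carries no nontrivial diagram automorphism (the three arms at the branch node $3$ have pairwise different lengths $1,2,6$). Hence the only admissible moves are the $F[i]$ of rule (iii): when $i$ is painted, $F[i]$ reverses the colours of exactly the neighbours of $i$ and fixes everything else. By Theorem 6.2 and Corollary 6.3 every painting is $F[i]$-equivalent to one with at most one painted vertex, so it suffices to sort the ten one-vertex paintings $\{0\},\dots,\{9\}$ (together with the empty painting, which admits no move and yields the compact form) into classes.

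First I would carry out the \emph{connecting} step: for each claimed relation I produce an explicit word in the $F[i]$, reducing the multi-painted intermediaries back to a single vertex. For instance $\{1\}\xrightarrow{F[1]}\{1,2\}\xrightarrow{F[2]}\{2,3\}\xrightarrow{F[3]}\{0,3,4\}\xrightarrow{F[0]}\{0,4\}$ realises $1\sim(0,4)$, and the paired relations of Lemma 6.5 (such as $(0,p,q)\sim(p-1,q-1)$) supply the shortcuts used in the two displayed chains. This part is routine but lengthy; each link is checked by a single application of rule (iii).

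The decisive step is the \emph{separation}, i.e.\ showing that the two chains really are distinct classes. For this I would build a combinatorial invariant. Encode a painting by its indicator vector $v\in\mathbb{F}_2^{\{0,\dots,9\}}$ and let $A$ be the adjacency matrix of the diagram over $\mathbb{F}_2$, so that $F[i]$ sends $v\mapsto v+a_i$ with $a_i=Ae_i$ the $i$-th column, the move being admissible exactly when $v_i=1$. Since $E_{10}$ is a tree with a unique perfect matching, $A$ is invertible over $\mathbb{F}_2$, and because each deletion $E_{10}\setminus\{i\}$ has an odd number of vertices (hence no perfect matching) the diagonal of $A^{-1}$ vanishes. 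One may therefore define a quadratic form $\Phi$ on $\mathbb{F}_2^{10}$ whose polar form is $A^{-1}$ and whose linear part is $q=A^{-1}\mathbf{1}$; a direct computation gives $\Phi(v+a_i)-\Phi(v)=v_i+1$, which vanishes precisely when $v_i=1$. Thus $\Phi$ is constant along every $F[i]$-chain, and on a single vertex it reduces to $\Phi(\{j\})=q_j$. Evaluating $q=A^{-1}\mathbf{1}$ then assigns each class its value and pins the one-vertex paintings into the level sets of $\Phi$, while the empty (compact) painting is separated from all of them by the fact that no move applies to it.

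The main obstacle I anticipate is exactly this separation, together with the internal consistency of the two chains. Because $\Phi$ must be constant on any chain of $F[i]$-moves, every claimed relation has to respect the partition into level sets of $\Phi$, and the relations imported from Lemma 6.5 must be read in the linear arrangement for which that lemma was established rather than in the bare $E_{10}$ labelling. Verifying that each individual link in the two chains is compatible with $\Phi$ -- and re-deriving by an explicit $F[i]$-word any link that Lemma 6.5 does not literally cover in the present numbering -- is where essentially all the care is needed; the Borel--de Siebenthal reduction and the construction of $\Phi$ are by comparison the easy ingredients.
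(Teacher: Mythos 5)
Your separation machinery is correct, and that is precisely why your proposal cannot be completed into a proof: evaluated honestly, it \emph{refutes} the proposition. All your ingredients check out. The $E_{10}$ diagram (chain $9,8,\dots,1$ with $0$ attached to $3$) has the unique perfect matching $\{0,3\},\{1,2\},\{4,5\},\{6,7\},\{8,9\}$, so $A$ is invertible over $\mathbb{F}_2$; every diagonal entry of $A^{-1}$ vanishes; and $(A^{-1})_{jk}=0$ whenever $j,k$ are two neighbours of a common vertex $i$ (removing the three-vertex path $j,i,k$ leaves seven vertices, hence no perfect matching), which is exactly the hidden condition $Q(Ae_i)=0$ that your identity $\Phi(v+a_i)=\Phi(v)+v_i+1$ requires. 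But you defer the one computation that decides everything, namely $q=A^{-1}\mathbf{1}$. Carrying it out gives $q_j=1$ for $j\in\{0,2,3,4,7,8\}$ and $q_j=0$ for $j\in\{1,5,6,9\}$, and likewise $\Phi\bigl((0,1)\bigr)=\Phi\bigl((0,2)\bigr)=\Phi\bigl((0,5)\bigr)=0$ while $\Phi\bigl((0,3)\bigr)=\Phi\bigl((0,6)\bigr)=\Phi\bigl((0,7)\bigr)=1$. Thus the proposition's second class mixes the two level sets of your invariant: the diagrams $6$, $9$, $(0,1)$, $(0,2)$, $(0,5)$ cannot be equivalent to $0,2,3,4,7,8$. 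The statement is false, and your $\Phi$ is exactly the tool that proves it false; the items listed in the proposition actually regroup as $\{1,5,6,9,(0,1),(0,2),(0,4),(0,5),(0,8),(0,9)\}$ and $\{0,2,3,4,7,8,(0,3),(0,6),(0,7)\}$.

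Measured against the paper: the paper's proof is a pure connecting argument (Lemma 6.4 plus explicit $F$-words) with no separation step at all, and the only two links in its chains that join the two level sets are computational errors. Applying $F\left\langle 8,7,6,5,4,3,0\right\rangle$ to $(2,8)$ yields $(0,9)$, not $(0)$, and applying $F\left\langle 8,7,6,5,4,3,2\right\rangle$ to $(1,8)$ yields $(0,2,9)$, not $(0,2)$: in both words $F[8]$ paints vertex $9$, whose only neighbour is $8$, and no later letter of the word ever unpaints it. (A third word in the paper, reducing $(0,1,8)$, has the same defect but happens to land in the correct level set, so only the conclusion survives.) Every remaining link is compatible with $\Phi$ and can be verified directly. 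So the closing hedge of your plan --- that each claimed relation ``has to respect the partition into level sets of $\Phi$'' --- is not a residual chore to be checked; it is the decisive step, it is a finite computation, and it comes out negative. Completed, your proposal is a correct disproof of the proposition as stated, not a proof of it, and you should say so explicitly rather than leave it conditional.
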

\begin{proof}
The  proposition can be proved by using  $E_{n}$ arguments and  Lemma 5.5 , switching the  sequences as follows 

\begin{center}
$\begin{array}{ccccc}
(0,7) & \sim & (1,8) & \mbox{by Lemma 5.5(b)}\\
 & \sim & (0,2,9) & \mbox{by Lemma 5.5(a)}\\
 & \sim & (8) & F\left\langle 0,3,4,5,6,7,8\right\rangle \\
 & \sim & (0,1,9) & \mbox{by Lemma 5.5(b)}\\
 & \sim & \left(2\right) & \mbox{by\ensuremath{F\left\langle
9,8,7,6,5,4,3,2\right\rangle }}\\
 & \sim & \left(0,1,4\right) & \mbox{by F\ensuremath{\left\langle
2,3,0\right\rangle }}\\
 & \sim & \left(3\right) & \mbox{by Lemma 5.5(b)}\end{array}$
\end{center}

\begin{center}
$\begin{array}{ccccc}
\left(0,5\right) & \sim & \left(1,6\right) & \mbox{by Lemma 5.5(b)}\\
 & \sim & \left(0,2,7\right) & \mbox{by Lemma 5.5(a)}\\
 & \sim & \left(6\right) & \mbox{by}F\left\langle 0,3,4,5,6\right\rangle \\
 & \sim & \left(0,1,7\right) & \mbox{by Lemma 5.5(b)}\\
 & \sim & \left(2,8\right) & \mbox{by Lemma 5.5(a)}\\
 & \sim & \left(0\right) & F\left\langle 8,7,6,5,4,3,0\right\rangle \\
 & \sim & \left(0,3\right) & \mbox{F\ensuremath{\left\langle 0\right\rangle }}\\
 & \sim & \left(1,4\right) & \mbox{by Lemma 5.5(b)}\\
 & \sim & \left(0,2,5\right)\sim\left(4\right) & F\left\langle
0,3,4\right\rangle \\
 & \sim & \left(0,1,5\right) & \mbox{by Lemma 5.5(b)}\\
 & \sim & \left(2,6\right) & \mbox{by Lemma 5.5(b)}\\
 & \sim & \left(0,7\right) & F\left\langle 6,5,4,3,0\right\rangle \\
 & \sim & \left(1,8\right) & \mbox{by Lemma 5.5(b)}\\
 & \sim & \left(0,2\right) & F\left\langle 8,7,6,5,4,3,2\right\rangle \\
 & \sim & \left(0,1\right) & F\left\langle 0,2,1\right\rangle \\
 & \sim & \left(9\right) & \mbox{by F\ensuremath{\left\langle
1,2,3,4,5,6,7,8,9\right\rangle }}\end{array}$
\end{center}

\begin{center}
$\begin{array}{cccc}
\left(5\right) & \sim & \left(0,1,6\right) & \mbox{by Lemma 5.5(b)}\\
 & \sim & \left(1,5\right) & F\left\langle 0,1,3,4,5\right\rangle \\
 & \sim & \left(0,4\right) & \mbox{by Lemma 5.5(b)}\sim\left(1\right)\mbox{by
}F\left\langle 0,3,2,1\right\rangle \\
 & \sim & \left(0,9\right) & F\left\langle 1,2,3,4,5,6,7,8,9\right\rangle
\end{array}$
\end{center}

\begin{center}
$\begin{array}{cccc}
\left(0,6\right) & \sim & \left(1,7\right) & \mbox{by Lemma 5.5(b)}\\
 & \sim & \left(0,2,8\right) & \mbox{by Lemma 5.5(a)}\\
 & \sim & \left(7\right) & \mbox{by}F\left\langle 0,3,4,5,6\right\rangle \\
 & \sim & \left(0,1,8\right) & \mbox{by Lemma 5.5(b)}\\
 & \sim & \left(2\right) & \mbox{by F\ensuremath{\left\langle
8,7,6,5,4,3,2\right\rangle }}\\
 & \sim & \left(0,1,4\right) & \mbox{by F\ensuremath{\left\langle
2,3,0\right\rangle }}\\
 & \sim & \left(3\right) & \mbox{by Lemma 5.5(b)}\end{array}$
\end{center} \end{proof}
\begin{rmk}  The two Vogan diagrams of hyperbolic Kac-Moody $E_{10}$ are given below. \\

\begin{picture}(60,30) \thicklines
\put(7,0){\circle{8}}\put(34,0){\circle{8}}\put(61,0){\circle{8}}\put(88,0){\circle{8}}\put(115,0){\circle{8}}\put(142,0){\circle{8}}
\put(169,0){\circle{8}}\put(196,0){\circle*{8}}\put(223,0){\circle{8}} 
\put(169,27){\circle{8}}
\put(11,0){\line(1,0){19}}\put(38,0){\line(1,0){19}}\put(65,0){\line(1,0){19}}\put(92,0){\line(1,0){19}}\put(119,0){\line(1,0){19}}
\put(146,0){\line(1,0){19}}\put(173,0){\line(1,0){19}}\put(200,0){\line(1,0){19}}
\put(169,4){\line(0,1){19}}
 \put(7,-15){\makebox(0,0){$9$}} \put(34,-15){\makebox(0,0){$8$}}\put(61,-15){\makebox(0,0){$7$}}
 \put(88,-15){\makebox(0,0){$6$}}\put(115,-15){\makebox(0,0){$5$}}\put(142,-15){\makebox(0,0){$4$}}
 \put(169,-15){\makebox(0,0){$3$}}\put(196,-15){\makebox(0,0){$2$}}\put(223,-15){\makebox(0,0){$1$}}
 \put(169,40){\makebox(0,0){$0$}}
\end{picture}

\vspace{1.5cm}
\begin{picture}(60,30) \thicklines
\put(7,0){\circle{8}}\put(34,0){\circle{8}}\put(61,0){\circle{8}}\put(88,0){\circle{8}}\put(115,0){\circle{8}}\put(142,0){\circle{8}}
\put(169,0){\circle{8}}\put(196,0){\circle{8}}\put(223,0){\circle*{8}} 
\put(169,27){\circle{8}}
\put(11,0){\line(1,0){19}}\put(38,0){\line(1,0){19}}\put(65,0){\line(1,0){19}}\put(92,0){\line(1,0){19}}\put(119,0){\line(1,0){19}}
\put(146,0){\line(1,0){19}}\put(173,0){\line(1,0){19}}\put(200,0){\line(1,0){19}}
\put(169,4){\line(0,1){19}}
 \put(7,-15){\makebox(0,0){$9$}} \put(34,-15){\makebox(0,0){$8$}}\put(61,-15){\makebox(0,0){$7$}}
 \put(88,-15){\makebox(0,0){$6$}}\put(115,-15){\makebox(0,0){$5$}}\put(142,-15){\makebox(0,0){$4$}}
 \put(169,-15){\makebox(0,0){$3$}}\put(196,-15){\makebox(0,0){$2$}}\put(223,-15){\makebox(0,0){$1$}}
 \put(169,40){\makebox(0,0){$0$}}
\end{picture}
\vspace{1.5cm}
\end{rmk}
Now we consider some more examples.\\

 \begin{prop}
 The Vogan diagrams of $H_{116}^{(3)}, IG_{2}(1,3)$
 \begin{itemize}
  \item [(a)] $(1,3)$
  \item [(b)] $(1)$
  \item [(c)] $(2)\sim (1,2,3)$
  \item [(d)] $(3)$
  \item [(e)] $(1,2)\sim (2,3)$
 \end{itemize}

  \end{prop}

   \begin{picture}(160,30) \thicklines
\put(7,0){\circle{8}}\put(34,0){\circle{8}}\put(90,0){$\cdots$}\put(61,0){\circle{8}}
\put(3,0){\vertexccn}\put(32,0){\vertexbbn}
  \end{picture}
  \begin{picture}(160,30) \thicklines
\put(7,0){\circle*{8}}\put(34,0){\circle{8}}\put(61,0){\circle*{8}}
\put(3,0){\vertexccn}\put(32,0){\vertexbbn}
  \end{picture}
 
\begin{prop}
 The equivalence classes of Vogan diagrams of $H_{35}^{4}$ are given by
 \begin{itemize}
  \item [(a)] $(1,3,4)\sim (1,2,3,4)$ by $F<4>$
  \item [(b)] $1\sim (1,2)$
  \item [(c)] $2$
  \item [(d)] $3\sim (3,2)$
  \item [(e)] $4\sim (2,4)$
  \item [(f)] $(1,3)\sim (1,2,3)$
  \item [(g)] $(1,4)\sim (1,2,4)$
  \item [(h)] $(3,4)\sim (2,3,4)$
 \end{itemize}

\end{prop}

  \begin{picture}(160,30) \thicklines
  \put(4,7){$1$} \put(30,-12){$2$} \put(60,7){$4$}\put(32,33){$3$}
\put(7,0){\circle*{8}}\put(34,27){\circle*{8}}\put(34,0){\circle{8}}\put(61,0){\circle*{8}}
\put(4,0){\vertexbn}\put(32,0){\vertexcn}
\put(32,4){\line(0,1){20}}\put(36,4){\line(0,1){20}}
\put(34,12){\line(-1,1){10}}\put(34,12){\line(1,1){10}}\put(30,-22){$(a)$}
  \end{picture}
  \begin{picture}(160,30) \thicklines
\put(7,0){\circle{8}}\put(34,27){\circle{8}}\put(34,0){\circle*{8}}\put(61,0){\circle{8}}
\put(4,0){\vertexbn}\put(32,0){\vertexcn}
\put(32,4){\line(0,1){20}}\put(36,4){\line(0,1){20}}
\put(34,12){\line(-1,1){10}}\put(34,12){\line(1,1){10}}\put(30,-22){$(c)$}
  \end{picture}
\vspace{1cm}

The rest diagrams for $(b),(d),(e),(f),(g),(h)$ can be constructed similarly. 

\noindent{\bf Example-2}  Dynkin diagram of a rank 4 hyperbolic algebra \\ 
Consider the Dynkin diagram
\begin{center}

\begin{picture}(120,30) \thicklines
\put(61,18){\circle{8}}\put(61,-18){\circle{8}}\put(88,0){\circle{8}}\put(115,0){\circle{8}}
\put(87,9){\makebox(0,0){2}} 
\put(85,2.5){\line(-3,2){20}}\put(85,-2.5){\line(-3,-2){20}}\put(92,0){\line(1,0){19}}\put(61,-14){\line(0,1){28}}
\put(61,28){\makebox(0,0){1}} \put(61,-28){\makebox(0,0){3}}   \put(115,10){\makebox(0,0){4}} 
\end{picture}             \end{center}

\vspace{1cm}

\begin{prop} The equivalences classes of Vogan diagram of the above hyperbolic Kac-Moody  algebra
are given by
\begin{itemize}
 \item [(a)]$1\sim  3\sim 4$ 
 \item [(b)] $2$
\end{itemize}\end{prop}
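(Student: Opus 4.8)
The plan is to read the combinatorial shape of the diagram off the picture and then handle the two parts separately: part (a) by exhibiting explicit equivalences, and part (b) by computing the entire equivalence class of the vertex $2$ and checking that it omits $1,3,4$.

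First I would record the structure. The vertices $1,2,3$ form a triangle (single bonds $1$--$2$, $2$--$3$, $1$--$3$) and vertex $4$ is a pendant attached only to $2$. Since every bond is single, the only nontrivial diagram automorphism is the transposition $1\leftrightarrow 3$ (it must fix the degree-$3$ vertex $2$ and the degree-$1$ vertex $4$), and the rule for $F[i]$ in items (i)--(iii) collapses to: keep $i$ painted and reverse the colour of every neighbour of $i$. For part (a), the automorphism $1\leftrightarrow 3$ gives $1\sim 3$ immediately. For $1\sim 4$ I would chain $F$-moves: from the single painted vertex $1$, applying $F[1]$ reverses its neighbours $2,3$ to produce $(1,2,3)$; then $F[2]$ reverses $1,3,4$, giving $(2,4)$; finally $F[4]$ reverses $2$, giving $(4)$. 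Thus $1\sim(1,2,3)\sim(2,4)\sim 4$, which together with $1\sim 3$ establishes (a).

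For part (b) I would generate the full equivalence class of the painting $(2)$ by repeatedly applying every admissible $F[i]$ together with the automorphism, and verify that the resulting set is closed. Starting from $(2)$, the move $F[2]$ gives $(1,2,3,4)$, and continuing produces exactly the ten paintings $(2),(1,2,3,4),(1,4),(3,4),(1,3,4),(1,2,4),(2,3,4),(2,3),(1,2),(1,3)$. Since this set is closed under each admissible $F[i]$ and under $1\leftrightarrow 3$, it is precisely the equivalence class of $(2)$; as it contains none of $1,3,4$, the painting $(2)$ is inequivalent to them, which is part (b).

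Part (a) is routine bookkeeping; the real content is the inequivalence in (b). Because the diagram is of indefinite (hyperbolic) type, the Weyl group admits no nonzero invariant vector, so there is no simple linear parity functional separating the classes, and the clean rigorous route is the finite orbit enumeration above. Its only delicate point is checking closure of the ten-element orbit under all moves, which I expect to be the main (though elementary) obstacle. As a consistency check I would note that the three orbits---the empty painting $\emptyset$, the five-element class of $1$ given by $(1),(3),(4),(1,2,3),(2,4)$, and the ten-element class of $2$---partition all $2^{4}=16$ paintings, each orbit containing a representative with at most one painted vertex, in agreement with Theorem 6.2.
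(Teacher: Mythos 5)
Your proposal is correct, and for part (a) it essentially coincides with the paper's own argument: the paper uses the same chain $(1)\sim(1,2,3)\sim(2,4)\sim(4)$ via $F(1),F(2),F(4)$, obtaining $(3)$ from $(1,2,3)$ by $F(3)$ rather than by the automorphism $1\leftrightarrow 3$, an immaterial difference. Where you genuinely depart from the paper is part (b). The paper's proof of (b) consists only of the two equivalences $(2)\sim(1,2,3,4)\sim(1,3,4)$; it never shows that the class of $(2)$ is disjoint from the class of $(1)$, so as written it does not actually establish that $2$ forms a separate equivalence class. Your orbit enumeration fills exactly this gap: with neighbour sets $N(1)=\{2,3\}$, $N(2)=\{1,3,4\}$, $N(3)=\{1,2\}$, $N(4)=\{2\}$, the ten paintings you list are indeed closed under every admissible $F[i]$ and under $1\leftrightarrow 3$ (for instance $F[2]$ sends $(1,2)$ to $(2,3,4)$ and $(1,2,4)$ to $(2,3)$, $F[4]$ sends $(1,2,4)$ to $(1,4)$, and the swap $1\leftrightarrow3$ exchanges $(1,4)$ with $(3,4)$ and $(1,2)$ with $(2,3)$), and together with the five-element orbit $\{(1),(3),(4),(1,2,3),(2,4)\}$ and the fixed empty painting they partition all $2^{4}=16$ paintings, so $(2)\not\sim(1)$. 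This costs a finite but tedious closure check, and in exchange you obtain the inequivalence statement that is the real content of the proposition --- precisely the point the paper's own proof leaves unproved.
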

\begin{proof}
 By applying $F\langle i\rangle$

\[\begin{array}{cccc}
\left(1\right) & \sim & \left(1,2,3\right) & \mbox{ by } F\langle 1\rangle\\
 & \sim & (2,4) & \mbox{ by } F\langle 2\rangle\\
 & \sim & (4) \sim (2,4)&\sim (1,2,3) \mbox { by } F\langle 4,2\rangle\\

 & (1,2,3)&\sim (3)& \mbox{ by } F\langle 3\rangle\\

 \end{array}\]

\noindent Also we have

\[\begin{array}{cccc}
\left(2\right) & \sim & \left(1,2,3,4\right) \sim (1,4) & \mbox { by } F\left\langle 2,1\right\rangle \\
 & \left(1,2,3,4\right) \sim & \left(1,3,4\right) &\mbox { by }  F\left\langle 4\right\rangle \\
 
 & (1,2,3,4)&\sim (3,4)\sim (2,3,4)&\sim(1,2) \sim(1,3)\sim(2,3)\sim(1,2,4)\mbox{ by } F\langle 3,4,2,1,3,2\rangle\\
\end{array}\]

Vogan diagram of this algebra is given by

\begin{picture}(120,30) \thicklines
\put(61,18){\circle{8}}\put(61,-18){\circle{8}}\put(88,0){\circle{8}}\put(115,0){\circle{8}}
\put(87,9){\makebox(0,0){2}} 
\put(85,2.5){\line(-3,2){20}}\put(85,-2.5){\line(-3,-2){20}}\put(92,0){\line(1,0){19}}\put(61,-14){\line(0,1){28}}
\put(61,28){\makebox(0,0){1}} \put(61,-28){\makebox(0,0){3}}   \put(115,10){\makebox(0,0){4}} 
\end{picture}
\begin{picture}(120,30) \thicklines
\put(61,18){\circle{8}}\put(61,-18){\circle{8}}\put(88,0){\circle*{8}}\put(115,0){\circle{8}}
\put(87,9){\makebox(0,0){2}} 
\put(85,2.5){\line(-3,2){20}}\put(85,-2.5){\line(-3,-2){20}}\put(92,0){\line(1,0){19}}\put(61,-14){\line(0,1){28}}
\put(61,28){\makebox(0,0){1}} \put(61,-28){\makebox(0,0){3}}   \put(115,10){\makebox(0,0){4}} 
\end{picture}
\vspace{1.5cm}

\begin{picture}(120,30) \thicklines
\put(61,18){\circle{8}}\put(61,-18){\circle{8}}\put(88,0){\circle{8}}\put(115,0){\circle*{8}}
\put(87,9){\makebox(0,0){2}} 
\put(85,2.5){\line(-3,2){20}}\put(85,-2.5){\line(-3,-2){20}}\put(92,0){\line(1,0){19}}\put(61,-14){\line(0,1){28}}
\put(61,28){\makebox(0,0){1}} \put(61,-28){\makebox(0,0){3}}   \put(115,10){\makebox(0,0){4}} 
\end{picture}
\begin{picture}(120,30) \thicklines
\put(61,18){\circle{8}}\put(61,-18){\circle{8}}\put(88,0){\circle{8}}\put(115,0){\circle*{8}}
\put(87,9){\makebox(0,0){2}} 
\put(85,2.5){\line(-3,2){20}}\put(85,-2.5){\line(-3,-2){20}}\put(92,0){\line(1,0){19}}\put(61,-14){\line(0,1){28}}
\put(61,28){\makebox(0,0){1}} \put(61,-28){\makebox(0,0){3}}   \put(115,10){\makebox(0,0){4}} 
 \qbezier(47,0)(47,10)(57,18)
        \qbezier(47,0)(47,-10)(57,-18)
\put(57,18){\vector( 1, 1){0}}
\put(57,-18){\vector( 1, -1){0}}
\end{picture}

\vspace{2cm}

\begin{picture}(120,30) \thicklines
\put(61,18){\circle{8}}\put(61,-18){\circle{8}}\put(88,0){\circle{8}}\put(115,0){\circle{8}}
\put(87,9){\makebox(0,0){2}} 
\put(85,2.5){\line(-3,2){20}}\put(85,-2.5){\line(-3,-2){20}}\put(92,0){\line(1,0){19}}\put(61,-14){\line(0,1){28}}
\put(61,28){\makebox(0,0){1}} \put(61,-28){\makebox(0,0){3}}   \put(115,10){\makebox(0,0){4}} 
 \qbezier(47,0)(47,10)(57,18)
        \qbezier(47,0)(47,-10)(57,-18)
\put(57,18){\vector( 1, 1){0}}
\put(57,-18){\vector( 1, -1){0}}
\end{picture}
\begin{picture}(120,30) \thicklines
\put(61,18){\circle{8}}\put(61,-18){\circle{8}}\put(88,0){\circle*{8}}\put(115,0){\circle{8}}
\put(87,9){\makebox(0,0){2}} 
\put(85,2.5){\line(-3,2){20}}\put(85,-2.5){\line(-3,-2){20}}\put(92,0){\line(1,0){19}}\put(61,-14){\line(0,1){28}}
\put(61,28){\makebox(0,0){1}} \put(61,-28){\makebox(0,0){3}}   \put(115,10){\makebox(0,0){4}} 
 \qbezier(47,0)(47,10)(57,18)
        \qbezier(47,0)(47,-10)(57,-18)
\put(57,18){\vector( 1, 1){0}}
\put(57,-18){\vector( 1, -1){0}}
\end{picture}
\vspace{1cm}

\end{proof}

\noindent{\bf Example-3} The Dynkin diagram of another rank 4 hyperbolic Kac-Moody algebra is given by 
\begin{displaymath}
 \begin{picture}(100,30) \thicklines
\put(0,0){\circle{8}}\put(27,-18){\circle{8}}\put(0,-36){\circle{8}}\put(-27,-18){\circle{8}}
\put(0,-32){\line(0,1){28}}\put(4,-34){\line(3,2){20}}\put(-4,-34){\line(-3,2){20}}\put(-24,-15){\line(3,2){20}}
\put(24,-15){\line(-3,2){20}}
\put(0,10){\makebox(0,0){2}}\put(36,-18){\makebox(0,0){3}}\put(0,-45){\makebox(0,0){4}}\put(-35,-18){\makebox(0,0){1}}
\end{picture}
\end{displaymath}

\vspace{2cm}

\begin{prop}
The equivalence classes of  another rank 4 hyperbolic Kac-Moody algebra are given by

\begin{enumerate}
\item[(a)] $3 \sim 1$ (by symmetry) 
\item[(b)] $4 \sim 2$ by symmetry
\item [(c)] $(2,4)$

\end{enumerate}\end{prop}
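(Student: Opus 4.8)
The plan is to classify the colourings using only the two moves that generate the equivalence—application of a diagram automorphism (operation (a)) and the reflection $F[i]$ at a painted vertex (operation (b))—read against the combinatorics of the diagram. First I would record the diagram data. The four nodes form the cycle $1$--$2$--$3$--$4$--$1$ together with the extra bond $2$--$4$, all single bonds, so the neighbourhoods are $N(1)=\{2,4\}$, $N(2)=\{1,3,4\}$, $N(3)=\{2,4\}$, $N(4)=\{1,2,3\}$. The labelled diagram has exactly two nontrivial automorphisms, the reflection interchanging $1$ and $3$ (fixing $2,4$) and the reflection interchanging $2$ and $4$ (fixing $1,3$). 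Since applying an automorphism is operation (a), these give $1\sim 3$ and $2\sim 4$ at once, which already yields the symmetry assertions in (a) and (b).

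Next I would produce the chain in (a) from the combinatorial rule for $F[i]$ recorded before Lemma 6.4. As every bond here is single, exception (ii) never occurs, so $F[i]$ simply reverses the colour of each neighbour of $i$ while fixing $i$ and every non-neighbour. Beginning at the single painting $(1)$, the move $F[1]$ reverses $N(1)=\{2,4\}$ and produces $(1,2,4)$; applying $F[4]$ then reverses $N(4)=\{1,2,3\}$, returning $1,2$ to compact and painting $3$, hence $(3,4)$. This gives $(1)\sim(1,2,4)\sim(3,4)$, and combined with $1\sim 3$ it accounts for the whole of (a). For (b) the automorphism already gives $2\sim 4$; to exhibit other members of this class I would run a short chain out of $(2)$, for instance $F[2]$ sends $(2)$ to the fully painted diagram $(1,2,3,4)$, after which $F[1]$ gives $(1,3)$.

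The substantive part is the converse: that (a) and (b) are genuinely distinct and that the list is complete, i.e. $(1)\not\sim(2)$. Here I would enumerate the orbits of all $2^{4}=16$ colourings under the relation generated by the two automorphisms and the admissible $F[i]$, and separate them by a colouring invariant preserved by every $F[i]$ (equivalently, by invoking the correspondence between almost compact real forms and Vogan diagrams). I expect the main obstacle to be squaring this enumeration with Theorem 6.2 (Borel--de Siebenthal): with all bonds single the orbit count is four, namely the orbit of (a), the orbit of (b), the empty (compact) diagram, and a fourth orbit $\{(2,4),(1,2,3),(1,3,4)\}$ that contains no single-painted representative. Reconciling this last orbit with the ``at most one painted vertex'' reduction is the delicate step: I would first re-verify the symmetrizability and the actual bond multiplicities of the GCM, since a single arrow bond would activate exception (ii) of the $F$-rule and can absorb that orbit into another, and only then commit to the final list. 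Once an $F$-invariant separating $(1)$ from $(2)$ is in hand, both (a) and (b) follow immediately.
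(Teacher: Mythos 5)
Your toolkit---operation (a) diagram automorphisms plus the reflections $F[i]$---is exactly the paper's, and where the paper is right your chains coincide with its own. For part (a), $(1)\sim(1,2,4)$ by $F[1]$ and $(1,2,4)\sim(3,4)$ by $F[4]$ is literally the paper's proof. For part (b), however, the paper does not rest on the symmetry $2\leftrightarrow 4$ alone: it asserts $4\sim(1,3,4)$ by $F(4)$ and $(1,3,4)\sim(2,3)$ by $F(1)$, and both steps are miscomputed. Since the chord $2$--$4$ is an edge, $N(4)=\{1,2,3\}$ and $F[4]$ sends $(4)$ to $(1,2,3,4)$, not to $(1,3,4)$; likewise $F[1]$ sends $(1,3,4)$ to $(1,2,3)$, not to $(2,3)$, because rule (i) keeps the reflected vertex painted. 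Your substitute chain $(2)\sim(1,2,3,4)\sim(1,3)$ by $F[2]$, $F[1]$ is the correct one. The only slip on your side is immaterial: the graph has three nontrivial automorphisms (the two swaps and their composition), not two, but the two you name generate the full group.

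Your ``fourth orbit'' is genuine, and it is where your analysis goes beyond---and against---the paper. All bonds in this diagram are single (the picture uses only plain lines), so exception (ii) of the $F$-rule never applies, and a finite check of the $2^{4}$ paintings yields exactly four orbits under $F[i]$ and automorphisms: $\{\emptyset\}$; the eight-element orbit of $(1)$, which is class (a); the four-element orbit $\{(2),(4),(1,3),(1,2,3,4)\}$, which is class (b); and $\{(2,4),(1,2,3),(1,3,4)\}$. This last orbit has no representative with at most one painted vertex, so if the proposition is read as a complete classification it is missing a class, and the same orbit is a counterexample to the paper's Theorem 6.2 for this diagram. Note also that the paper's erroneous (b)-chain, taken at face value, would merge all three nonempty orbits, since $(4)$ lies in class (b), $(1,3,4)$ in the missing orbit, and $(2,3)$ in class (a); this is another sign that those two steps cannot be repaired. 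The paper never addresses the distinctness of (a) and (b) nor completeness, so nothing in its proof can absorb the extra orbit; your enumeration settles both points, and your hedge about re-verifying the bond multiplicities resolves in favor of your list. The one caveat worth keeping is that this is a classification of paintings under the paper's combinatorial equivalence of Section 6.1; deciding whether the missing orbit's real form is or is not isomorphic to one of the others would require input beyond that framework, and the paper supplies none.
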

\begin{proof}
By symmetry(diagram automorphism) and $F_i$ algorithm, we get .
\begin{itemize}
 \item [(a)] $1\sim (1,2,4)\sim(3,4)\sim (1,2,4)\sim (2,3)$ by $F\langle 1,4,4,2\rangle$
 \item [(b)] $4\sim (1,2,3,4)\sim(1,3)\sim (1,2,3,4)\sim 2$  $F\langle 4,1,1,2 \rangle$
 \item [(a*)] $(1,2)\sim (2,3,4)\sim (1,4)\sim(2,3,4)\sim 3$ by $F\langle 2,4,4,3 \rangle$
 \item [(c)] $(2,4)\sim (1,2,3)\sim(1,3,4)\sim (2,4)$ by $F\langle 2,3,4 \rangle$
 
\end{itemize}
\begin{displaymath}
 \begin{picture}(100,30) \thicklines
\put(0,0){\circle*{8}}\put(27,-18){\circle{8}}\put(0,-36){\circle*{8}}\put(-27,-18){\circle{8}}
\put(0,-32){\line(0,1){28}}\put(4,-34){\line(3,2){20}}\put(-4,-34){\line(-3,2){20}}\put(-24,-15){\line(3,2){20}}
\put(24,-15){\line(-3,2){20}}
\put(0,10){\makebox(0,0){2}}\put(36,-18){\makebox(0,0){3}}\put(0,-45){\makebox(0,0){4}}\put(-35,-18){\makebox(0,0){1}}
\end{picture}
\end{displaymath}

\vspace{1cm}
\begin{displaymath}
 \begin{picture}(90,30) \thicklines
\put(0,0){\circle{8}}\put(27,-18){\circle{8}}\put(0,-36){\circle{8}}\put(-27,-18){\circle{8}}
\put(0,-32){\line(0,1){28}}\put(4,-34){\line(3,2){20}}\put(-4,-34){\line(-3,2){20}}\put(-24,-15){\line(3,2){20}}
\put(24,-15){\line(-3,2){20}}
\put(0,10){\makebox(0,0){2}}\put(36,-18){\makebox(0,0){3}}\put(0,-45){\makebox(0,0){4}}\put(-35,-18){\makebox(0,0){1}}
\end{picture}
\begin{picture}(90,30) \thicklines
\put(0,0){\circle{8}}\put(27,-18){\circle{8}}\put(0,-36){\circle{8}}\put(-27,-18){\circle{8}}
\put(0,-32){\line(0,1){28}}\put(4,-34){\line(3,2){20}}\put(-4,-34){\line(-3,2){20}}\put(-24,-15){\line(3,2){20}}
\put(24,-15){\line(-3,2){20}}

\qbezier(-27,-14)(0,28)(27,-14)

\put(-27,-14){\vector(-1,-2){0}}
\put(27,-14){\vector( 1, -2){0}}

\put(0,10){\makebox(0,0){2}}\put(36,-18){\makebox(0,0){3}}\put(0,-45){\makebox(0,0){4}}\put(-35,-18){\makebox(0,0){1}}
\end{picture}
\begin{picture}(90,30) \thicklines
\put(0,0){\circle{8}}\put(27,-18){\circle{8}}\put(0,-36){\circle*{8}}\put(-27,-18){\circle{8}}
\put(0,-32){\line(0,1){28}}\put(4,-34){\line(3,2){20}}\put(-4,-34){\line(-3,2){20}}\put(-24,-15){\line(3,2){20}}
\put(24,-15){\line(-3,2){20}}

\qbezier(-27,-14)(0,28)(27,-14)

\put(-27,-14){\vector(-1,-2){0}}
\put(27,-14){\vector( 1, -2){0}}

\put(0,10){\makebox(0,0){2}}\put(36,-18){\makebox(0,0){3}}\put(0,-45){\makebox(0,0){4}}\put(-35,-18){\makebox(0,0){1}}
\end{picture}
\end{displaymath}
\vspace{2cm}

\begin{displaymath}
\begin{picture}(90,30) \thicklines
\put(0,0){\circle{8}}\put(27,-18){\circle{8}}\put(0,-36){\circle{8}}\put(-27,-18){\circle{8}}
\put(0,-32){\line(0,1){28}}\put(4,-34){\line(3,2){20}}\put(-4,-34){\line(-3,2){20}}\put(-24,-15){\line(3,2){20}}
\put(24,-15){\line(-3,2){20}}

\qbezier(4,0)(30,-18)(4,-36)
\qbezier(-27,-22)(0,-65)(27,-22)
\put(4,0){\vector(-1,1){0}}
\put(4,-36){\vector( -1, -1){0}}
\put(-27,-22){\vector(-1,1){0}}
\put(27,-22){\vector( 1,2){0}}
\put(0,10){\makebox(0,0){2}}\put(36,-18){\makebox(0,0){3}}\put(0,-45){\makebox(0,0){4}}\put(-35,-18){\makebox(0,0){1}}

\end{picture}
\begin{picture}(90,30) \thicklines
\put(0,0){\circle{8}}\put(27,-18){\circle{8}}\put(0,-36){\circle{8}}\put(-27,-18){\circle*{8}}
\put(0,-32){\line(0,1){28}}\put(4,-34){\line(3,2){20}}\put(-4,-34){\line(-3,2){20}}\put(-24,-15){\line(3,2){20}}
\put(24,-15){\line(-3,2){20}}

\put(0,10){\makebox(0,0){2}}\put(36,-18){\makebox(0,0){3}}\put(0,-45){\makebox(0,0){4}}\put(-35,-18){\makebox(0,0){1}}

\end{picture}
\begin{picture}(90,30) \thicklines
\put(0,0){\circle{8}}\put(27,-18){\circle{8}}\put(0,-36){\circle{8}}\put(-27,-18){\circle*{8}}
\put(0,-32){\line(0,1){28}}\put(4,-34){\line(3,2){20}}\put(-4,-34){\line(-3,2){20}}\put(-24,-15){\line(3,2){20}}
\put(24,-15){\line(-3,2){20}}

\qbezier(4,0)(30,-18)(4,-36)

\put(4,0){\vector(-1,1){0}}
\put(4,-36){\vector( -1, -1){0}}
\put(0,10){\makebox(0,0){2}}\put(36,-18){\makebox(0,0){3}}\put(0,-45){\makebox(0,0){4}}\put(-35,-18){\makebox(0,0){1}}

\end{picture}\end{displaymath}

\vspace{2cm}
\begin{center}

\begin{picture}(100,30) \thicklines
\put(0,0){\circle{8}}\put(27,-18){\circle{8}}\put(0,-36){\circle*{8}}\put(-27,-18){\circle{8}}
\put(0,-32){\line(0,1){28}}\put(4,-34){\line(3,2){20}}\put(-4,-34){\line(-3,2){20}}\put(-24,-15){\line(3,2){20}}
\put(24,-15){\line(-3,2){20}}

\put(0,10){\makebox(0,0){2}}\put(36,-18){\makebox(0,0){3}}\put(0,-45){\makebox(0,0){4}}\put(-35,-18){\makebox(0,0){1}}

\end{picture}

\end{center}
\vspace{2cm}
\end{proof}

\noindent{\bf Example-4} The dynkin diagram of a rank 5 hyperbolic Kac-Moody algebra is given by

\begin{picture}(60,30) \thicklines
\put(61,18){\circle{8}}\put(61,-18){\circle{8}}\put(88,0){\circle{8}}\put(115,0){\circle{8}}
\put(142,0){\circle{8}}
\put(85,2){\line(-3,2){20}}\put(85,-2){\line(-3,-2){20}}\put(85,0){\vertexbn}
\put(119,0){\line(1,0){19}}
\put(61,28){\makebox(0,0){2}} \put(61,-28){\makebox(0,0){1}} \put(87,10){\makebox(0,0){3}}  \put(115,10){\makebox(0,0){4}} 
\put(142,10){\makebox(0,0){5}}
\end{picture} 

\vspace{1cm}

\begin{prop} The equivalenc classes of Vogan diagram of this rank 5 
hyperbolic Kac-Moody algebra are given by
\begin{itemize}
 \item[(a)] $1\sim 2 \sim 3$
 \item[(b)] $4 \sim (4,5) \sim 5$
\end{itemize}\end{prop}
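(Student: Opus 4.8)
The plan is to use the two moves that generate Vogan-diagram equivalence: the diagram automorphism and the reflection $F(i)$ at a painted (noncompact) vertex, subject to the colour-change rules (i)--(iii), the only delicate one being the double-edge rule (ii). In the diagram at hand the legs $1$ and $2$ are single-bonded to $3$, the bond $3\Rightarrow 4$ is a double bond with $3$ the long and $4$ the short root (the arrow points to $4$), and $4$ is single-bonded to $5$. I would treat the two assertions separately, establishing (b) by a direct chain and (a) by first identifying $1$ with $2$ and then trying to transport the painting at $3$ onto a leg.

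For (b) the computation is short. Reflecting at the short vertex $4$, rule (ii) leaves the long neighbour $3$ uncoloured while the single neighbour $5$ flips, so $(4)\sim(4,5)$ by $F(4)$; reflecting at $5$ flips its single neighbour $4$, so $(4,5)\sim(5)$ by $F(5)$. Hence $4\sim(4,5)\sim 5$, which is exactly (b).

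For (a), $1\sim 2$ is immediate from the order-two diagram automorphism interchanging the legs $1,2$ and fixing $3,4,5$. To bring $3$ into the same class I would reflect at the long vertex $3$: the single-bond rule flips both legs, and since the double-bond neighbour $4$ is short, rule (ii) does not protect it, so $F(3)$ sends $(3)$ to the multiply painted diagram $(1,2,3,4)$. I would then peel off the extra colours with $F(1)$ and $F(2)$, each of which toggles only $3$, attempting to descend to a single painted leg.

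The hard part will be precisely this reduction across the double bond $3\Rightarrow 4$: whether the colour initially sitting at $3$ can be carried onto a leg is controlled entirely by the long/short bookkeeping of rule (ii), and the very same bookkeeping is what prevents the pair $\{4,5\}$ from merging with $\{1,2,3\}$. I would pin it down by computing, once and for all, the orbit of each single painting under the partial involutions $F(1),\dots,F(5)$ together with the leg-swap, regarding a painting as a vector in $\mathbb{F}_2^{5}$ and tracking the induced toggles on the coordinates of $1,2,4,5$; this finite check is what exhibits the orbits claimed in the statement and certifies that no further single-vertex diagrams get identified.
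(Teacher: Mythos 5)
Your part (b) is correct and is exactly the paper's argument: $F(4)$ flips only vertex $5$ because rule (ii) protects the long neighbour $3$, and $F(5)$ flips $4$, so $4\sim(4,5)\sim5$. Your derivation of $1\sim2$ from the leg-swap automorphism is also valid (and cleaner than the paper's route, which instead tries to chain both $1$ and $2$ to the common painting $(1,2,3,4)$). The genuine gap is part (a): you never exhibit a chain of moves joining $(3)$ to a painted leg. You correctly compute $F(3)\colon(3)\mapsto(1,2,3,4)$, note that $F(1)$ and $F(2)$ toggle only vertex $3$, and then defer the decisive question to an unperformed orbit computation ``which exhibits the orbits claimed.'' That is a promise of a proof, not a proof.

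Worse, the deferred computation, if carried out, refutes (a) rather than certifying it. Every generator preserves the parity of $|S\cap\{1,2\}|$, the number of painted legs: $F(1)$ and $F(2)$ toggle only vertex $3$; $F(3)$ toggles $1,2,4$ simultaneously, changing $|S\cap\{1,2\}|$ by $0$ or $\pm2$; $F(4)$ toggles only $5$ (again rule (ii)); $F(5)$ toggles only $4$; and the automorphism merely permutes $\{1,2\}$. Since $(1)$ and $(2)$ have odd parity while $(3)$ has even parity, $(3)$ can never reach a single painted leg: its full orbit is $(3),(1,2,3,4),(1,2,4),(1,2,3),(1,2),(3,4),(3,4,5),(3,5),(1,2,5),\dots$, in which the two legs are always painted together or unpainted together. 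So under the stated equivalence moves the classes of singly painted diagrams are $\{1,2\}$, $\{3\}$ and $\{4,(4,5),5\}$, not those claimed. The paper's own proof of (a) fails at precisely the point you flagged as ``the hard part'': it asserts $(2,3)\sim(1,2,3,4)$ and $(1,3)\sim(1,2,3,4)$ by $F(3)$, forgetting that the already painted leg adjacent to $3$ must also flip; the correct outputs are $(1,3,4)$ and $(2,3,4)$ respectively. Your instinct that the long/short bookkeeping across the double bond controls everything was right --- but that bookkeeping kills the claim $1\sim3$ instead of saving it.
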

\begin{proof}
 By using $F_i$ algorithm

\[\begin{array}{cccc}
2 & \sim & \left(2,3\right) & \mbox{ by }F\langle 2\rangle \\
 & \sim & \left(1,2,3,4\right) & \mbox{ by }F\langle 3\rangle \end{array}\]

\noindent and

\[\begin{array}{cccc}
1 & \sim & \left(1,3\right) & \mbox{by }F\langle 1\rangle\\
 & \sim & \left(1,2,3,4\right) & \mbox{by }F\langle 3\rangle\\
 & \sim & \left(1,2,3,4,5\right) & \mbox{by }F\langle 4\rangle\end{array}\]

\noindent from above  we get $1\sim2$\\
$3\sim (1,2,3,4)\mbox{ by } F\langle 3\rangle\sim (1,2,4)\mbox{ by } F\langle 1\rangle$\\

So $1\sim 2 \sim 3$

$\begin{array}{cccccccc}
4 & \sim\left(4,5\right) & \mbox{by }F\langle 4\rangle & \sim & 5 & \mbox{by
}F\langle 5\rangle\end{array}$

\noindent So the Vogan Diagrams of the above dynkin diagram with diagram automorphism become

\begin{picture}(120,30) \thicklines
\put(61,18){\circle{8}}\put(61,-18){\circle{8}}\put(88,0){\circle{8}}\put(115,0){\circle{8}}
\put(142,0){\circle{8}}\put(87,9){\makebox(0,0){3}} 
\put(85,2){\line(-3,2){20}}\put(85,-2){\line(-3,-2){20}}\put(85,0){\vertexbn}
\put(119,0){\line(1,0){19}}
\put(61,28){\makebox(0,0){2}} \put(61,-28){\makebox(0,0){1}}  \put(115,10){\makebox(0,0){4}} 
\put(142,10){\makebox(0,0){5}}
\end{picture} 

\vspace{1.5cm}

\begin{picture}(120,30) \thicklines
\put(61,18){\circle{8}}\put(61,-18){\circle*{8}}\put(88,0){\circle{8}}\put(115,0){\circle{8}}
\put(142,0){\circle{8}}\put(87,9){\makebox(0,0){3}} 
\put(85,2){\line(-3,2){20}}\put(85,-2){\line(-3,-2){20}}\put(85,0){\vertexbn}
\put(119,0){\line(1,0){19}}
\put(61,28){\makebox(0,0){2}} \put(61,-28){\makebox(0,0){1}}  \put(115,10){\makebox(0,0){4}} 
\put(142,10){\makebox(0,0){5}}
\end{picture} 
\begin{picture}(120,30) \thicklines
\put(61,18){\circle{8}}\put(61,-18){\circle{8}}\put(88,0){\circle{8}}\put(115,0){\circle{8}}
\put(142,0){\circle*{8}}\put(87,9){\makebox(0,0){3}} 
\put(85,2){\line(-3,2){20}}\put(85,-2){\line(-3,-2){20}}\put(85,0){\vertexbn}
\put(119,0){\line(1,0){19}}
\put(61,28){\makebox(0,0){2}} \put(61,-28){\makebox(0,0){1}}   \put(115,10){\makebox(0,0){4}} 
\put(142,10){\makebox(0,0){5}}
\end{picture} 

\vspace{1.5cm}

\begin{picture}(120,30) \thicklines
\put(61,18){\circle{8}}\put(61,-18){\circle{8}}\put(88,0){\circle{8}}\put(115,0){\circle{8}}
\put(142,0){\circle{8}}\put(87,9){\makebox(0,0){3}} 
\put(85,2){\line(-3,2){20}}\put(85,-2){\line(-3,-2){20}}\put(85,0){\vertexbn}
\put(119,0){\line(1,0){19}}
\put(61,28){\makebox(0,0){2}} \put(61,-28){\makebox(0,0){1}}   \put(115,10){\makebox(0,0){4}} 
\put(142,10){\makebox(0,0){5}}
 \qbezier(47,0)(47,10)(57,18)
        \qbezier(47,0)(47,-10)(57,-18)
\put(57,18){\vector( 1, 1){0}}
\put(57,-18){\vector( 1, -1){0}}
\end{picture}
\begin{picture}(120,30) \thicklines
\put(61,18){\circle{8}}\put(61,-18){\circle{8}}\put(88,0){\circle{8}}\put(115,0){\circle{8}}
\put(142,0){\circle*{8}}\put(87,9){\makebox(0,0){3}} 
\put(85,2){\line(-3,2){20}}\put(85,-2){\line(-3,-2){20}}\put(85,0){\vertexbn}
\put(119,0){\line(1,0){19}}
\put(61,28){\makebox(0,0){2}} \put(61,-28){\makebox(0,0){1}}   \put(115,10){\makebox(0,0){4}} 
\put(142,10){\makebox(0,0){5}}
 \qbezier(47,0)(47,10)(57,18)
        \qbezier(47,0)(47,-10)(57,-18)
\put(57,18){\vector( 1, 1){0}}
\put(57,-18){\vector( 1, -1){0}}
\end{picture}

\vspace{1cm}

\end{proof}
Note- There may be double painted Vogan diagrams non equivalent to single painted diagram. 
\noindent{\bf Example-5} The dynkin diagram of another rank 5 hyperbolic Kac-Moody algebra is given by

\begin{center}
\begin{picture}(80,30)\thicklines
\put(0,0){\circle{8}}\put(27,0){\circle{8}}\put(54,0){\circle{8}}
\put(12,-32){\circle{8}}\put(42,-32){\circle{8}}
\put(4,0){\line(1,0){19}}\put(31,0){\line(1,0){19}}
\put(16,-32){\line(1,0){22}}
\put(13.5,-28){\line(-2,5){10}}\put(8.5,-30){\line(-2,5){10.5}}
\put(40.5,-28){\line(2,5){10}}\put(45,-30){\line(2,5){10.5}}
\put(48,-16){\line(2,1){11.5}}\put(48,-16){\line(-2,5){4}}
\put(6,-16){\line(-2,1){11.5}}\put(6,-16){\line(2,5){4}}
\put(-12,0){1}\put(25,6){2}\put(60,0){3}\put(10,-45){5}\put(40,-45){4}
\qbezier(27,14)(14, 13)(0,6) \qbezier(27,14)(40, 13)(55, 6)
\put(0,6){\vector( -2, -1){0}}
\put(55,6){\vector( 2, -1){0}}
\qbezier(27,-40)(22,-40)(12,-36) \qbezier(27,-40)(32,-40)(42,-36)
\put(12,-36){\vector( -2,1){0}}
\put(42,-36){\vector( 2,1){0}}
\end{picture}\end{center}

\vspace{2cm}

\begin{prop} The equivalence classes of this rank 5  hyperbolic Kac-Moody algebra are given by
\begin{itemize}
 \item [(a)] $2\sim (1,2,3)$
 \item [(b)]$5\sim (4,5)\sim (2,3,4,5) \sim4\sim (4,5)$
 \item [(c)]$1\sim (1,2,5)\sim (2,3,5) \sim 3 \sim (2,3,4)\sim (1,2,4)$ 
\end{itemize}\end{prop}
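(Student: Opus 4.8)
The plan is to mimic the computational proofs of Examples 2--4, using only the two equivalence moves permitted here: the diagram automorphism $\tau=(1\,3)(4\,5)$, which fixes the vertex $2$ and interchanges $1\leftrightarrow3$ and $4\leftrightarrow5$, and the reflections $F(i)$ at a painted vertex $i$, governed by the combinatorial rules (i)--(iii). First I would read the data off the Dynkin diagram: the single bonds $1$--$2$, $2$--$3$, $4$--$5$ and the double bonds $1$--$5$, $3$--$4$ (so the diagram is a pentagon), and fix the orientation of the double bonds with $1,3$ long and $4,5$ short, since the exception in rule (ii) depends on which end is long. By Theorem 6.2 every class has a representative with at most one painted vertex, so it suffices to determine the orbits of the single-vertex paintings $(1),\dots,(5)$ under $\langle F(i),\tau\rangle$; the empty painting gives the compact form and is a separate trivial class.

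The equivalences are then produced move by move. Applying $\tau$ gives $(1)\sim(3)$ and $(4)\sim(5)$ at once. For class (a), one application of $F(2)$ flips the single neighbours $1,3$ and yields $(2)\sim(1,2,3)$. For class (c) I would run $F(1)$, which flips the single neighbour $2$ and the short double-neighbour $5$, to obtain $(1)\sim(1,2,5)$, then $F(2)$ to reach $(2,3,5)$, and transport everything by $\tau$ to $(3)\sim(2,3,4)\sim(1,2,4)$; since $(1)\sim(3)$ is already known, these fuse into one class. Class (b) is the short orbit $(4)\sim(4,5)\sim(5)$, obtained from $F(4)$ and $F(5)$, whose only moving neighbours are the single-bond partners, the long ends $3,1$ being held fixed by the exception.

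The step I expect to be the real obstacle is proving the three classes are pairwise \emph{non}-equivalent, because $F(i)$ and $\tau$ only ever demonstrate equivalence. Since there are only $2^{5}=32$ paintings, the most reliable route is to compute the whole orbit partition by hand and check that it consists of exactly four blocks: the empty painting together with the blocks containing $(2)$, $(4)$ and $(1)$. Here the double-bond exception must be applied with care: an $F$ at a short end (say $F(5)$) leaves the long end $1$ unchanged and flips only the single neighbour $4$, whereas an $F$ at a long end flips the short end. This bookkeeping also corrects the statement: tracing $F(4)$ on the painting $(2,3,4,5)$ gives $(2,3,4)$, so $(2,3,4,5)$ in fact lies in block (c), not block (b) as the displayed chain suggests, and I would reroute that line accordingly. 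If one prefers an invariant to the brute-force count, the isomorphism type of the maximal compact subalgebra $\mathfrak{k}_0$ attached to the involution separates the $\tau$-fixed vertex $2$ from the two outer pairs, but for a rank-$5$ diagram the exhaustive orbit computation is the cleaner finish.
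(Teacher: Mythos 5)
Your proposal follows the same method as the paper's own proof---reducing paintings by the $F(i)$ moves together with the diagram automorphism $(1\,3)(4\,5)$---and every chain you write down is valid under the length convention ($1,3$ long, $4,5$ short) that is forced by the paper's other steps, namely $(1)\sim(1,2,5)$ by $F(1)$ and $(3)\sim(2,3,4)$ by $F(3)$. The substantive difference is your correction of item (b), and you are right. The paper's step ``$(4,5)\sim(2,3,4,5)$ by $F(3)$'' is not even a legal move, since $F(3)$ can only be applied when vertex $3$ is painted, and no legal sequence can repair it: with $1$ and $3$ long, $F(4)$ toggles only vertex $5$ and $F(5)$ toggles only vertex $4$, so the set $\{(4),(5),(4,5)\}$ is closed under all available operations and is the entire class (b). Meanwhile $(2,3,4,5)$ is reached from class (c) by applying $F(5)$ to $(2,3,5)$, exactly as you observe. (The same bookkeeping shows the paper's stray claim $(2)\sim(1,4,5)$ in item (a) is also false, since $(1,4,5)$ lies in the class of $(1)$; and the paper's step ``$(2,3,5)\sim(2,3,4)$ by $F(4)$'' in item (c) is likewise illegal as stated, though its conclusion is correct because one can route $(2,3,5)\sim(2,3,4,5)\sim(2,3,4)$ through $F(5)$ then $F(4)$.)

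Your second addition---actually proving the three classes are pairwise non-equivalent---fills a gap the paper leaves open in this and all its other examples, where only equivalences are exhibited and distinctness is tacitly assumed. The exhaustive orbit computation you propose does settle it: the $32$ paintings partition into the empty diagram, a $12$-element orbit containing $(2)$, the $3$-element orbit $\{(4),(5),(4,5)\}$, and a $16$-element orbit containing $(1)$, which confirms that (a), (b) as corrected, and (c) are genuinely distinct. So your write-up is correct, and where it departs from the paper it is the paper, not your argument, that needs amending.
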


\begin{proof}
 
\begin{enumerate}
\item [(a)] $\left(2\right)\sim\left(1,2,3\right)\mbox{ by 
}F\left(2\right)\sim\left(1,4,5\right)$
\item [(b)] $\left(5\right)\sim\left(4,5\right)\mbox{ by 
}F\left(5\right)\sim\left(2,3,4,5\right)\mbox{ by }F\left(3\right)$\\
and  $\left(4\right)\sim(4,5)\mbox{ by } F\langle 4\rangle$
\item [(c)]$\left(1\right)\sim (1,2,5)\mbox{ by } F\langle 1\rangle\sim (2,3,5)\mbox{ by } F\langle 2\rangle\sim (2,3,4) \mbox{ by } F\langle 4\rangle$\\
 \mbox{and} $3 \sim (2,3,4)\mbox{ by } F\langle 3\rangle\sim (1,2,4) \mbox{ by } F\langle 2\rangle$
\end{enumerate}
\end{proof}

\begin{center}
\begin{picture}(80,30)\thicklines
\put(0,0){\circle{8}}\put(27,0){\circle{8}}\put(54,0){\circle{8}}
\put(12,-32){\circle{8}}\put(42,-32){\circle{8}}
\put(4,0){\line(1,0){19}}\put(31,0){\line(1,0){19}}
\put(16,-32){\line(1,0){22}}
\put(13.5,-28){\line(-2,5){10}}\put(8.5,-30){\line(-2,5){10.5}}
\put(40.5,-28){\line(2,5){10}}\put(45,-30){\line(2,5){10.5}}
\put(48,-16){\line(2,1){11.5}}\put(48,-16){\line(-2,5){4}}
\put(6,-16){\line(-2,1){11.5}}\put(6,-16){\line(2,5){4}}
\put(-12,0){1}\put(25,6){2}\put(60,0){3}\put(10,-45){5}\put(40,-45){4}
\end{picture}
\begin{picture}(80,30)\thicklines
\put(0,0){\circle{8}}\put(27,0){\circle{8}}\put(54,0){\circle{8}}
\put(12,-32){\circle{8}}\put(42,-32){\circle{8}}
\put(4,0){\line(1,0){19}}\put(31,0){\line(1,0){19}}
\put(16,-32){\line(1,0){22}}
\put(13.5,-28){\line(-2,5){10}}\put(8.5,-30){\line(-2,5){10.5}}
\put(40.5,-28){\line(2,5){10}}\put(45,-30){\line(2,5){10.5}}
\put(48,-16){\line(2,1){11.5}}\put(48,-16){\line(-2,5){4}}
\put(6,-16){\line(-2,1){11.5}}\put(6,-16){\line(2,5){4}}
\put(-12,0){1}\put(25,6){2}\put(60,0){3}\put(10,-45){5}\put(40,-45){4}
\qbezier(27,14)(14, 13)(0,6) \qbezier(27,14)(40, 13)(55, 6)
\put(0,6){\vector( -2, -1){0}}
\put(55,6){\vector( 2, -1){0}}
\qbezier(27,-40)(22,-40)(12,-36) \qbezier(27,-40)(32,-40)(42,-36)
\put(12,-36){\vector( -2,1){0}}
\put(42,-36){\vector( 2,1){0}}
\end{picture}
\begin{picture}(80,30)\thicklines
\put(0,0){\circle{8}}\put(27,0){\circle*{8}}\put(54,0){\circle{8}}
\put(12,-32){\circle{8}}\put(42,-32){\circle{8}}
\put(4,0){\line(1,0){19}}\put(31,0){\line(1,0){19}}
\put(16,-32){\line(1,0){22}}
\put(13.5,-28){\line(-2,5){10}}\put(8.5,-30){\line(-2,5){10.5}}
\put(40.5,-28){\line(2,5){10}}\put(45,-30){\line(2,5){10.5}}
\put(48,-16){\line(2,1){11.5}}\put(48,-16){\line(-2,5){4}}
\put(6,-16){\line(-2,1){11.5}}\put(6,-16){\line(2,5){4}}
\put(-12,0){1}\put(25,6){2}\put(60,0){3}\put(10,-45){5}\put(40,-45){4}
\qbezier(27,14)(14, 13)(0,6) \qbezier(27,14)(40, 13)(55, 6)
\put(0,6){\vector( -2, -1){0}}
\put(55,6){\vector( 2, -1){0}}
\qbezier(27,-40)(22,-40)(12,-36) \qbezier(27,-40)(32,-40)(42,-36)
\put(12,-36){\vector( -2,1){0}}
\put(42,-36){\vector( 2,1){0}}
\end{picture}
\end{center}

\vspace{1cm}

\begin{center}
\begin{picture}(80,30)\thicklines
\put(0,0){\circle{8}}\put(27,0){\circle{8}}\put(54,0){\circle{8}}
\put(12,-32){\circle*{8}}\put(42,-32){\circle{8}}
\put(4,0){\line(1,0){19}}\put(31,0){\line(1,0){19}}
\put(16,-32){\line(1,0){22}}
\put(13.5,-28){\line(-2,5){10}}\put(8.5,-30){\line(-2,5){10.5}}
\put(40.5,-28){\line(2,5){10}}\put(45,-30){\line(2,5){10.5}}
\put(48,-16){\line(2,1){11.5}}\put(48,-16){\line(-2,5){4}}
\put(6,-16){\line(-2,1){11.5}}\put(6,-16){\line(2,5){4}}
\put(-12,0){1}\put(25,6){2}\put(60,0){3}\put(10,-45){5}\put(40,-45){4}
\end{picture}
\begin{picture}(80,30)\thicklines
\put(0,0){\circle*{8}}\put(27,0){\circle{8}}\put(54,0){\circle{8}}
\put(12,-32){\circle{8}}\put(42,-32){\circle{8}}
\put(4,0){\line(1,0){19}}\put(31,0){\line(1,0){19}}
\put(16,-32){\line(1,0){22}}
\put(13.5,-28){\line(-2,5){10}}\put(8.5,-30){\line(-2,5){10.5}}
\put(40.5,-28){\line(2,5){10}}\put(45,-30){\line(2,5){10.5}}
\put(48,-16){\line(2,1){11.5}}\put(48,-16){\line(-2,5){4}}
\put(6,-16){\line(-2,1){11.5}}\put(6,-16){\line(2,5){4}}
\put(-12,0){1}\put(25,6){2}\put(60,0){3}\put(10,-45){5}\put(40,-45){4}
\end{picture}
\begin{picture}(80,30)\thicklines
\put(0,0){\circle{8}}\put(27,0){\circle*{8}}\put(54,0){\circle{8}}
\put(12,-32){\circle{8}}\put(42,-32){\circle{8}}
\put(4,0){\line(1,0){19}}\put(31,0){\line(1,0){19}}
\put(16,-32){\line(1,0){22}}
\put(13.5,-28){\line(-2,5){10}}\put(8.5,-30){\line(-2,5){10.5}}
\put(40.5,-28){\line(2,5){10}}\put(45,-30){\line(2,5){10.5}}
\put(48,-16){\line(2,1){11.5}}\put(48,-16){\line(-2,5){4}}
\put(6,-16){\line(-2,1){11.5}}\put(6,-16){\line(2,5){4}}
\put(-12,0){1}\put(25,6){2}\put(60,0){3}\put(10,-45){5}\put(40,-45){4}
\end{picture}\end{center}

\vspace{2cm}

Note- There may be double painted Vogan diagrams non equivalent to single painted diagram.
\noindent{\bf Example-6} The Dynkin diagram of another rank 5 hyperbolic Kac-Moody algebra is given by

\begin{center}
\begin{picture}(120,30) \thicklines
 \put(0,0){\circle{8}}\put(27,0){\circle{8}}\put(54,0){\circle{8}}\put(81,0){\circle{8}}\put(54,27){\circle{8}}
\put(-2,0){\vertexcn} \put(31,0){\line(1,0){19}}\put(52,0){\vertexbn}\put(54,4){\line(0,1){19}}
\put(0,-12){\makebox(0,0){1}} \put(27,-12){\makebox(0,0){2}}   \put(54,-12){\makebox(0,0){3}}   \put(54,37){\makebox(0,0){5}} 
  \put(81,-12){\makebox(0,0){4}} 
\end{picture}
\end{center}

\vspace{0.5cm}

\begin{prop} The equivalence classes of this rank 5 
hyperbolic Kac-Moody algebra are given by
\begin{itemize}
 \item [(a)] 1
 \item [(b)] 4
 \item [(c)] 5
 \item [(d)] 3
 \item [(e)] 2
 \end{itemize}\end{prop}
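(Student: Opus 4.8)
The plan is to reduce every equivalence to the reflection moves $F[i]$, because the Dynkin diagram has no nontrivial automorphism. Indeed, vertex $3$ is the unique branch point (the only vertex of degree $3$) and so is fixed; vertex $2$ is the only vertex of degree $2$ and is fixed; among the degree-one vertices $1,4,5$, vertex $5$ is distinguished by lying on a single bond while $1,4$ lie on double bonds, and $1,4$ are in turn distinguished by their distance to the branch point ($2$ and $1$ respectively). Hence Batra's operation (a) of Section~6.1 is trivial and every equivalence is a composition of reflections. I encode a painting as $x=(x_1,\dots,x_5)\in(\mathbb Z/2)^5$ with $x_i=1$ for a painted vertex, and from the rules (i)--(iii) I read off the flip-sets: $F[2]$ flips $\{x_1,x_3\}$, $F[3]$ flips $\{x_2,x_4,x_5\}$, and $F[5]$ flips $\{x_3\}$.

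First I would isolate vertices $1$ and $4$. Each is attached to the diagram by a single double bond whose far endpoint ($2$ for vertex $1$, $3$ for vertex $4$) is the long root, the arrows pointing toward $1$ and toward $4$. By rule (ii) that long neighbour is never recoloured, so $F[1]$ and $F[4]$ leave every admissible painting unchanged. Starting from the diagram $(1)$ the only legal move is the do-nothing $F[1]$, and with no automorphism available the class of $(1)$ is the single diagram $(1)$; the same reasoning settles $(4)$. This proves parts (a) and (b).

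Next I would separate the remaining vertices with two $\mathbb Z/2$-invariants. A linear form $\sum c_i x_i$ is preserved by all admissible moves exactly when $c_1+c_3=0$, $c_2+c_4+c_5=0$, and $c_3=0$ (the conditions imposed by $F[2],F[3],F[5]$; the frozen $F[1],F[4]$ and the absent automorphisms impose nothing). The forms $I_1=x_2+x_4$ and $I_2=x_2+x_5$ solve this. Evaluating on the five single paintings gives $(1)\mapsto(0,0)$, $(2)\mapsto(1,1)$, $(3)\mapsto(0,0)$, $(4)\mapsto(1,0)$, $(5)\mapsto(0,1)$, so $(2),(4),(5)$ are pairwise inequivalent and each inequivalent to $(1)$ and $(3)$. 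The only single vertex sharing $(3)$'s value $(0,0)$ is $(1)$, whose class we already know to be just $\{(1)\}$; hence $(3)\not\sim(1)$ as well, and parts (c), (d), (e) follow.

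The main difficulty is the non-equivalence direction: one must exclude \emph{every} reflection chain between two distinct vertices, not merely fail to exhibit one. The two devices above meet this cleanly -- the frozen reflections $F[1],F[4]$ trap $(1)$ and $(4)$ outright, while the invariants $I_1,I_2$ certify the remaining separations with no orbit search. As a consistency check I would confirm that the $F$-orbits of $(3)$ and $(5)$ stay inside their level sets; for instance $(5)\sim(3,5)\sim(2,3,4)\sim(1,2,4)$ all carry $(I_1,I_2)=(0,1)$. This bookkeeping is reassuring but inessential to the argument.
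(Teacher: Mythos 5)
Your proof is correct, and it takes a genuinely different --- in fact more complete --- route than the paper's. You extract the flip-sets correctly from the long/short data of the two double bonds ($F[1]$ and $F[4]$ act trivially because their unique neighbours $2$ and $3$ are the long ends of the arrows; $F[2]$ flips $\{1,3\}$; $F[3]$ flips $\{2,4,5\}$; $F[5]$ flips $\{3\}$), you correctly rule out nontrivial diagram automorphisms, and you then prove pairwise inequivalence by two devices: the singleton orbits of $(1)$ and $(4)$, and the $\mathbb{Z}/2$-linear invariants $I_1=x_2+x_4$, $I_2=x_2+x_5$, which are constant along every admissible move and take the distinct values $(1,1)$, $(1,0)$, $(0,1)$ on $(2)$, $(4)$, $(5)$ while separating all three from $(1)$ and $(3)$; the residual pair $(1)\not\sim(3)$ follows from the singleton-orbit fact. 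The paper, by contrast, only writes down forward equivalence chains ($1\sim1$ by $F(1)$, $4\sim4$ by $F(4)$, $5\sim(3,5)\sim(2,3,4)$, $3\sim(2,3,4,5)$, $2\sim(1,2,3)$), i.e.\ it samples each orbit but never shows that the five orbits are actually distinct, which is the real content of the proposition; your invariant argument supplies exactly that missing non-equivalence direction. What the paper's chains buy, and what your argument deliberately leaves to the cited Borel--de Siebenthal-type theorem, is the reduction of arbitrary (multi-painted) diagrams to the single-painted representatives; combining that reduction with your distinctness proof yields the full statement, so yours is the logically stronger half of the argument.
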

\begin{proof}

\begin{enumerate}
\item[(a)] $1\sim 1$ by $F\langle 1\rangle$
\item[(b)] $ 4\sim 4$ by $F\langle 4\rangle$
\item[(c)] $5\sim (5,3)$ by $F\langle 5\rangle\sim(2,3,4)$ by $F\langle 3\rangle$
\item[(d)] $3\sim(2,3,4,5)$ by $F\langle 5\rangle$
\item[(e)] $2\sim (1,2,3)$ by $F\langle 2\rangle$
\end{enumerate}

\vspace{1cm}

\begin{picture}(115,30) \thicklines
 \put(0,0){\circle{8}}\put(27,0){\circle{8}}\put(54,0){\circle{8}}\put(81,0){\circle{8}}\put(54,27){\circle{8}}
\put(-2,0){\vertexcn} \put(31,0){\line(1,0){19}}\put(52,0){\vertexbn}\put(54,4){\line(0,1){19}}
\put(0,-12){\makebox(0,0){1}} \put(27,-12){\makebox(0,0){2}}   \put(54,-12){\makebox(0,0){3}}   \put(54,37){\makebox(0,0){5}} 
  \put(81,-12){\makebox(0,0){4}} 
\end{picture}
\begin{picture}(115,30) \thicklines
 \put(0,0){\circle{8}}\put(27,0){\circle{8}}\put(54,0){\circle*{8}}\put(81,0){\circle{8}}\put(54,27){\circle{8}}
\put(-2,0){\vertexcn} \put(31,0){\line(1,0){19}}\put(52,0){\vertexbn}\put(54,4){\line(0,1){19}}
\put(0,-12){\makebox(0,0){1}} \put(27,-12){\makebox(0,0){2}}   \put(54,-12){\makebox(0,0){3}}   \put(54,37){\makebox(0,0){5}} 
  \put(81,-12){\makebox(0,0){4}} 
\end{picture}
\begin{picture}(115,30) \thicklines
 \put(0,0){\circle*{8}}\put(27,0){\circle{8}}\put(54,0){\circle{8}}\put(81,0){\circle{8}}\put(54,27){\circle{8}}
\put(-2,0){\vertexcn} \put(31,0){\line(1,0){19}}\put(52,0){\vertexbn}\put(54,4){\line(0,1){19}}
\put(0,-12){\makebox(0,0){1}} \put(27,-12){\makebox(0,0){2}}   \put(54,-12){\makebox(0,0){3}}   \put(54,37){\makebox(0,0){5}} 
  \put(81,-12){\makebox(0,0){4}} 
\end{picture}

\vspace{1.5cm}

\begin{picture}(120,30) \thicklines
 \put(0,0){\circle{8}}\put(27,0){\circle{8}}\put(54,0){\circle{8}}\put(81,0){\circle*{8}}\put(54,27){\circle{8}}
\put(-2,0){\vertexcn} \put(31,0){\line(1,0){19}}\put(52,0){\vertexbn}\put(54,4){\line(0,1){19}}
\put(0,-12){\makebox(0,0){1}} \put(27,-12){\makebox(0,0){2}}   \put(54,-12){\makebox(0,0){3}}   \put(54,37){\makebox(0,0){5}} 
  \put(81,-12){\makebox(0,0){4}} 
\end{picture}
\begin{picture}(120,30) \thicklines
 \put(0,0){\circle{8}}\put(27,0){\circle{8}}\put(54,0){\circle{8}}\put(81,0){\circle{8}}\put(54,27){\circle*{8}}
\put(-2,0){\vertexcn} \put(31,0){\line(1,0){19}}\put(52,0){\vertexbn}\put(54,4){\line(0,1){19}}
\put(0,-12){\makebox(0,0){1}} \put(27,-12){\makebox(0,0){2}}   \put(54,-12){\makebox(0,0){3}}   \put(54,37){\makebox(0,0){5}} 
  \put(81,-12){\makebox(0,0){4}} 
\end{picture}
\begin{picture}(120,30) \thicklines
 \put(0,0){\circle{8}}\put(27,0){\circle*{8}}\put(54,0){\circle{8}}\put(81,0){\circle{8}}\put(54,27){\circle{8}}
\put(-2,0){\vertexcn} \put(31,0){\line(1,0){19}}\put(52,0){\vertexbn}\put(54,4){\line(0,1){19}}
\put(0,-12){\makebox(0,0){1}} \put(27,-12){\makebox(0,0){2}}   \put(54,-12){\makebox(0,0){3}}   \put(54,37){\makebox(0,0){5}} 
  \put(81,-12){\makebox(0,0){4}} 
\end{picture}
\end{proof}
Note- There may be double painted Vogan diagrams non equivalent to single painted diagram.

\section{Vogan diagrams of some more hyperbolic Kac-Moody algebras and its relative diagrams}
 
 We will construct few relative Dynkin diagrams, we can find similarly  the relative diagrams of all Vogan diagrams.\\
 
  The Vogan diagrams of $GG_{3}$
  \begin{itemize}
 \item [(a)]$2\sim (1,2,3)$
 \item[(b)] $1\sim 3$ by symmetry
  \item [(c)]$(1,3)$ is unique
  \end{itemize}

     \begin{picture}(160,30) \thicklines
\put(7,0){\circle{8}}\put(34,0){\circle{8}}\put(61,0){\circle{8}}
\put(3,0){\vertexccn}\put(32,0){\vertexbbn}
\put(4,7){1}\put(31,7){2}\put(58,7){3}
  \end{picture}
      \begin{picture}(160,30) \thicklines
\put(7,0){\circle*{8}}\put(34,0){\circle{8}}\put(61,0){\circle*{8}}
\put(3,0){\vertexccn}\put(32,0){\vertexbbn}
\put(4,7){1}\put(31,7){2}\put(58,7){3}
  \end{picture}
 
       \begin{picture}(60,30) \thicklines
\put(7,0){\circle*{8}}\put(34,0){\circle{8}}\put(61,0){\circle{8}}
\put(3,0){\vertexccn}\put(32,0){\vertexbbn}
  \end{picture}
   
    \begin{picture}(60,30) \thicklines
\put(7,0){\circle{8}}\put(34,0){\circle*{8}}\put(61,0){\circle{8}}
\put(3,0){\vertexccn}\put(32,0){\vertexbbn}
  \end{picture}
  
 \begin{picture}(60,40) \thicklines
\put(7,0){\circle{8}}\put(34,0){\circle{8}}\put(61,0){\circle{8}}
\put(3,0){\vertexccn}\put(32,0){\vertexbbn}
\qbezier(34,14)(21, 13)(7, 6) \qbezier(34,14)(47, 13)(62, 6)
\put(7,6){\vector( -2, -1){0}}
\put(62,6){\vector( 2, -1){0}}
  \end{picture}
  
 \begin{picture}(60,40) \thicklines
\put(7,0){\circle*{8}}\put(34,0){\circle{8}}\put(61,0){\circle*{8}}
\put(3,0){\vertexccn}\put(32,0){\vertexbbn}
\qbezier(34,14)(21, 13)(7, 6) \qbezier(34,14)(47, 13)(62, 6)
\put(7,6){\vector( -2, -1){0}}
\put(62,6){\vector( 2, -1){0}}
  \end{picture}
  
 \begin{picture}(60,40) \thicklines
\put(7,0){\circle{8}}\put(34,0){\circle*{8}}\put(61,0){\circle{8}}
\put(3,0){\vertexccn}\put(32,0){\vertexbbn}
\qbezier(34,14)(21, 13)(7, 6) \qbezier(34,14)(47, 13)(62, 6)
\put(7,6){\vector( -2, -1){0}}
\put(62,6){\vector( 2, -1){0}}
  \end{picture}
  \vspace{1cm}
  
  From the above diagrams we get the relative Dynkin diagrams
  
    \begin{picture}(160,30) \thicklines
\put(7,0){\circle{8}}\put(34,0){\circle{8}}\put(61,0){\circle{8}}
\put(3,0){\vertexccn}\put(32,0){\vertexbbn}
\put(95,0){\circle*{8}}\put(120,0){\circle*{8}}
\put(140,0){\circle{8}}\put(167,0){\circle{8}}\put(194,0){\circle{8}}
\put(136,0){\vertexccn}\put(165,0){\vertexbbn}
\qbezier(167,14)(154, 13)(140, 6) \qbezier(167,14)(180, 13)(194, 6)
\put(140,6){\vector( -2, -1){0}}
\put(194,6){\vector( 2, -1){0}}
\put(220,0){\circle*{8}}
\put(126,0){\line(1,0){4}}\put(101,0){\line(1,0){4}}
     \put(95,9){\makebox(0,0){1}}  \put(120,9){\makebox(0,0){2}} \put(220,9){\makebox(0,0){2}}
  \end{picture}
    
  \vspace{1cm}
  
  The Vogan diagrams of $G'G_{3}$
  \begin{prop} The equivalence classes of this rank 3 hyperbolic Kac-Moody
algebra are given by
   \begin{itemize}
    \item [(a)]$1$
    \item [(b)]$2$
     \item [(c)]$3$
   \end{itemize}

  \end{prop}
  \begin{proof}
    $1\sim (1,2) \sim (1,2,3)\sim(1,3)$ by $F\langle 1,2,1\rangle$ and $2\sim(2,3)$ by $F\langle 2\rangle$.

  \end{proof}

  \begin{picture}(60,30) \thicklines
  \put(4,7){1}\put(31,7){2}\put(58,7){3}
\put(7,0){\circle{8}}\put(34,0){\circle{8}}\put(61,0){\circle{8}}
\put(5,0){\vertexbbn}\put(32,0){\vertexbbn}
  \end{picture} 
  
    \begin{picture}(60,30) \thicklines
\put(7,0){\circle*{8}}\put(34,0){\circle{8}}\put(61,0){\circle{8}}
\put(5,0){\vertexbbn}\put(32,0){\vertexbbn}
  \end{picture}
  
     \begin{picture}(60,30) \thicklines
\put(7,0){\circle{8}}\put(34,0){\circle*{8}}\put(61,0){\circle{8}}
\put(5,0){\vertexbbn}\put(32,0){\vertexbbn}
  \end{picture}
  
     \begin{picture}(60,30) \thicklines
\put(7,0){\circle{8}}\put(34,0){\circle{8}}\put(61,0){\circle*{8}}
\put(5,0){\vertexbbn}\put(32,0){\vertexbbn}
  \end{picture}
  \vspace{1cm}
  
  Its relative Dynkin diagrams are
  
    \begin{picture}(60,30) \thicklines
\put(7,0){\circle{8}}\put(34,0){\circle{8}}\put(61,0){\circle{8}}
\put(5,0){\vertexbbn}\put(32,0){\vertexbbn}
\put(95,0){\circle*{8}}\put(120,0){\circle*{8}}
\put(145,0){\circle*{8}}
\put(126,0){\line(1,0){4}}\put(101,0){\line(1,0){4}}\put(151,0){\line(1,0){4}}
     \put(95,9){\makebox(0,0){1}}  \put(120,9){\makebox(0,0){2}} \put(145,9){\makebox(0,0){2}}
  \end{picture} 
  
  \vspace{1cm}
  
   The Vogan diagrams of $G'G'_{3}$
   \begin{prop} The equivalence classes of this rank 3 hyperbolic Kac-Moody
algebra are given by
    \begin{itemize}
     \item[(a)]  $1\sim 3$
     \item [(b)] $2\sim2$ 
     \item [(c)] $\sim (1,3)$
    \end{itemize}

   \end{prop}
   \begin{proof}
    \item [(a)] By symmetry $1\sim 3$
    \item [(b)] $2\sim2$
    \item [(c)] $(1,2,3)\sim (1,3)$ by $F\langle 1\rangle$
   \end{proof}

 \begin{picture}(60,30) \thicklines
 \put(4,7){1}\put(31,7){2}\put(58,7){3}
\put(7,0){\circle{8}}\put(34,0){\circle{8}}\put(61,0){\circle{8}}
\put(5,0){\vertexbbn}\put(31,0){\vertexccn}
  \end{picture} 
  
 \begin{picture}(60,30) \thicklines
\put(7,0){\circle*{8}}\put(34,0){\circle{8}}\put(61,0){\circle*{8}}
\put(5,0){\vertexbbn}\put(31,0){\vertexccn}
  \end{picture} 
  
   \begin{picture}(60,30) \thicklines
\put(7,0){\circle*{8}}\put(34,0){\circle{8}}\put(61,0){\circle{8}}
\put(5,0){\vertexbbn}\put(31,0){\vertexccn}
  \end{picture}
  
    \begin{picture}(60,30) \thicklines
\put(7,0){\circle{8}}\put(34,0){\circle*{8}}\put(61,0){\circle{8}}
\put(5,0){\vertexbbn}\put(31,0){\vertexccn}
  \end{picture} 
  
    \begin{picture}(60,30) \thicklines
\put(7,0){\circle{8}}\put(34,0){\circle{8}}\put(61,0){\circle{8}}
\put(5,0){\vertexbbn}\put(31,0){\vertexccn}
\qbezier(33,13)(21, 13)(7, 6) \qbezier(34,13)(47, 13)(62, 6)
\put(7,6){\vector( -2, -1){0}}
\put(62,6){\vector( 2, -1){0}}
  \end{picture}
      \begin{picture}(60,30) \thicklines
\put(7,0){\circle*{8}}\put(34,0){\circle{8}}\put(61,0){\circle*{8}}
\put(5,0){\vertexbbn}\put(31,0){\vertexccn}
\qbezier(33,13)(21, 13)(7, 6) \qbezier(34,13)(47, 13)(62, 6)
\put(7,6){\vector( -2, -1){0}}
\put(62,6){\vector( 2, -1){0}}
  \end{picture}
  
    \begin{picture}(60,30) \thicklines
\put(7,0){\circle{8}}\put(34,0){\circle*{8}}\put(61,0){\circle{8}}
\put(5,0){\vertexbbn}\put(31,0){\vertexccn}
\qbezier(33,13)(21, 13)(7, 6) \qbezier(34,13)(47, 13)(62, 6)
\put(7,6){\vector( -2, -1){0}}
\put(62,6){\vector( 2, -1){0}}
  \end{picture}
 \vspace{1cm} 
  
  We have the relative Dynkin diagrams for restricted roots
  
  \begin{picture}(60,30) \thicklines
  \put(7,0){\circle{8}}\put(34,0){\circle{8}}\put(61,0){\circle{8}}
\put(5,0){\vertexbbn}\put(31,0){\vertexccn}
\put(95,0){\circle*{8}}\put(120,0){\circle*{8}}

\put(126,0){\line(1,0){4}}\put(101,0){\line(1,0){4}}\put(151,0){\line(1,0){4}}
     \put(95,9){\makebox(0,0){1}}  \put(120,9){\makebox(0,0){2}} \put(220,9){\makebox(0,0){2}}
\put(140,0){\circle{8}}\put(167,0){\circle{8}}\put(194,0){\circle{8}}
\put(138,0){\vertexbbn}\put(165,0){\vertexccn}
\qbezier(167,14)(154, 13)(140, 6) \qbezier(167,14)(180, 13)(194, 6)
\put(140,6){\vector( -2, -1){0}}
\put(194,6){\vector( 2, -1){0}}
\put(220,0){\circle*{8}}
\put(226,0){\line(1,0){4}}
  \end{picture}

  \vspace{1cm}
  
The Vogan diagrams of  $AC_{2}^{(1)}$

\begin{prop} The equivalence classes of this rank 3 hyperbolic Kac-Moody
algebra are given by
\begin{itemize}
\item  [(a)] $1$
 \item [(b)] $3$
 \item [(c)] $(1,2)$

\end{itemize}\end{prop}
\begin{proof}
 \begin{itemize}
 \item  [(a)] $1\sim (1,2,3)\sim 2$ by $F\langle 1,2\rangle$ 
 \item  [(b)] $3\sim3 $ by $F\langle 3\rangle$
 \item  [(c)] $(1,2)\sim(1,3)\sim (2,3)$ by $F\langle 1,1,2\rangle$
 \end{itemize}
 \end{proof}

  \begin{picture}(60,30) \thicklines
  \put(4,7){1}\put(31,7){2}\put(18,-28){3}
\put(7,0){\circle{8}}\put(34,0){\circle{8}}\put(20,-16){\circle{8}}
 \put(11,0){\line(1,0){19}}\put(7,-4){\line(1,-1){10}}\put(10,-2){\line(1,-1){10}}
                           \put(31,-2){\line(-1,-1){10}}\put(34,-4){\line(-1,-1){10}}
                           
                           \put(6,-9){\line(1,0){8}} \put(14,-9){\line(0,1){8}}

                           \put(26,-9){\line(0,1){8}}\put(26,-9){\line(1,0){8}}
                      \end{picture}
    \begin{picture}(60,30) \thicklines
\put(7,0){\circle{8}}\put(34,0){\circle{8}}\put(20,-16){\circle*{8}}
 \put(11,0){\line(1,0){19}}\put(7,-4){\line(1,-1){10}}\put(10,-2){\line(1,-1){10}}
                           \put(31,-2){\line(-1,-1){10}}\put(34,-4){\line(-1,-1){10}}
                           
                           \put(6,-9){\line(1,0){8}} \put(14,-9){\line(0,1){8}}
                          
                                   \put(26,-9){\line(0,1){8}}\put(26,-9){\line(1,0){8}}
                    \end{picture}
    \begin{picture}(60,30) \thicklines
\put(7,0){\circle{8}}\put(34,0){\circle{8}}\put(20,-16){\circle*{8}}
 \put(11,0){\line(1,0){19}}\put(7,-4){\line(1,-1){10}}\put(10,-2){\line(1,-1){10}}
                           \put(31,-2){\line(-1,-1){10}}\put(34,-4){\line(-1,-1){10}}
                           
                           \put(6,-9){\line(1,0){8}} \put(14,-9){\line(0,1){8}}

                           \put(26,-9){\line(0,1){8}}\put(26,-9){\line(1,0){8}}
                    \qbezier(21,9)(23,10)(34,6) \qbezier(21,9)(19,10)(7,6)  
                    \put(7,6){\vector( -3, -1){0}}
\put(34,6){\vector( 3, -1){0}}
  \end{picture}
    \begin{picture}(60,30) \thicklines
\put(7,0){\circle{8}}\put(34,0){\circle*{8}}\put(20,-16){\circle{8}}
 \put(11,0){\line(1,0){19}}\put(7,-4){\line(1,-1){10}}\put(10,-2){\line(1,-1){10}}
                           \put(31,-2){\line(-1,-1){10}}\put(34,-4){\line(-1,-1){10}}
                           
                           \put(6,-9){\line(1,0){8}} \put(14,-9){\line(0,1){8}}

                           \put(26,-9){\line(0,1){8}}\put(26,-9){\line(1,0){8}}
                    \end{picture}
    \begin{picture}(60,30) \thicklines
\put(7,0){\circle{8}}\put(34,0){\circle{8}}\put(20,-16){\circle{8}}
 \put(11,0){\line(1,0){19}}\put(7,-4){\line(1,-1){10}}\put(10,-2){\line(1,-1){10}}
                           \put(31,-2){\line(-1,-1){10}}\put(34,-4){\line(-1,-1){10}}
                           
                           \put(6,-9){\line(1,0){8}} \put(14,-9){\line(0,1){8}}

                           \put(26,-9){\line(0,1){8}}\put(26,-9){\line(1,0){8}}
                    \qbezier(21,9)(23,10)(34,6) \qbezier(21,9)(19,10)(7,6)  
                    \put(7,6){\vector( -3, -1){0}}
\put(34,6){\vector( 3, -1){0}}
  \end{picture}
  \vspace{1cm}
  
The Vogan diagrams of  $AD_{3}^{(2)}$ \\
\begin{prop} The equivalence classes of this rank 3 hyperbolic Kac-Moody
algebra are given by
\begin{itemize}
\item  [(a)] $1$

\end{itemize}\end{prop}
\begin{proof}
 \begin{itemize}
 \item  [(a)]$1\sim (1,2)\sim 2$ by $F\langle 1,2\rangle$ also by symmetry
 \item  [(b)]$3\sim(1,2,3)\sim(2,3)\sim(1,2)\sim(1,3)$
 \end{itemize}
 \end{proof} 

      \begin{picture}(60,30) \thicklines
      \put(4,7){1}\put(31,7){2}\put(18,-28){3}
\put(7,0){\circle{8}}\put(34,0){\circle{8}}\put(20,-16){\circle{8}}
 \put(11,0){\line(1,0){19}}\put(7,-4){\line(1,-1){10}}\put(10,-2){\line(1,-1){10}}
                           \put(31,-2){\line(-1,-1){10}}\put(34,-4){\line(-1,-1){10}}
                           
                           \put(12,-7){\line(1,0){8}} \put(12,-7){\line(0,-1){8}}

                           \put(29,-7){\line(-1,0){8}}\put(29,-7){\line(0,-1){8}}
                     \end{picture}
        \begin{picture}(60,30) \thicklines
\put(7,0){\circle{8}}\put(34,0){\circle{8}}\put(20,-16){\circle*{8}}
 \put(11,0){\line(1,0){19}}\put(7,-4){\line(1,-1){10}}\put(10,-2){\line(1,-1){10}}
                           \put(31,-2){\line(-1,-1){10}}\put(34,-4){\line(-1,-1){10}}
                           
                           \put(12,-7){\line(1,0){8}} \put(12,-7){\line(0,-1){8}}

                           \put(29,-7){\line(-1,0){8}}\put(29,-7){\line(0,-1){8}}
                    \end{picture}
     \begin{picture}(60,30) \thicklines
\put(7,0){\circle{8}}\put(34,0){\circle{8}}\put(20,-16){\circle*{8}}
 \put(11,0){\line(1,0){19}}\put(7,-4){\line(1,-1){10}}\put(10,-2){\line(1,-1){10}}
                           \put(31,-2){\line(-1,-1){10}}\put(34,-4){\line(-1,-1){10}}
                           
                           \put(12,-7){\line(1,0){8}} \put(12,-7){\line(0,-1){8}}

                           \put(29,-7){\line(-1,0){8}}\put(29,-7){\line(0,-1){8}}
                    \qbezier(21,9)(23,10)(34,6) \qbezier(21,9)(19,10)(7,6)  
                    \put(7,6){\vector( -3, -1){0}}
\put(34,6){\vector( 3, -1){0}}
  \end{picture}
   
     \begin{picture}(60,30) \thicklines
\put(7,0){\circle{8}}\put(34,0){\circle{8}}\put(20,-16){\circle{8}}
 \put(11,0){\line(1,0){19}}\put(7,-4){\line(1,-1){10}}\put(10,-2){\line(1,-1){10}}
                  
                  \put(31,-2){\line(-1,-1){10}}\put(34,-4){\line(-1,-1){10}}
                           
                           \put(12,-7){\line(1,0){8}} \put(12,-7){\line(0,-1){8}}

                           \put(29,-7){\line(-1,0){8}}\put(29,-7){\line(0,-1){8}}
                    \qbezier(21,9)(23,10)(34,6) \qbezier(21,9)(19,10)(7,6)  
                    \put(7,6){\vector( -3, -1){0}}
\put(34,6){\vector( 3, -1){0}}
  \end{picture}
  \vspace{1cm}
    
  The Vogan diagrams of  $AGG_{3}$ \\
  
\begin{prop} The equivalence classes of this rank 3 hyperbolic Kac-Moody
algebra are given by
\begin{itemize}
\item  [(a)] $1$

\end{itemize}\end{prop}
\begin{proof}
 \begin{itemize}
 \item  [(a)]$1\sim (1,2)\sim 2$ by $F\langle 1,2 \rangle$ also by symmetry
 \item  [(b)] $3\sim(1,2,3)\sim(2,3)\sim(1,2)\sim(1,3)$
 \end{itemize}
 \end{proof}

        \begin{picture}(60,30) \thicklines
        \put(4,7){1}\put(31,7){2}\put(18,-28){3}
\put(7,0){\circle{8}}\put(34,0){\circle{8}}\put(20,-16){\circle{8}}
 \put(11,0){\line(1,0){19}}\put(7,-4){\line(1,-1){10}}\put(8.5,-3){\line(1,-1){10}}\put(10,-2){\line(1,-1){10}}
                           \put(31,-2){\line(-1,-1){10}}\put(33,-2.5){\line(-1,-1){10}}\put(34,-4){\line(-1,-1){10}}

                           \put(12,-7){\line(1,0){8}} \put(12,-7){\line(0,-1){8}}

                           \put(29,-7){\line(-1,0){8}}\put(29,-7){\line(0,-1){8}}
                  \end{picture}
         \begin{picture}(60,30) \thicklines
\put(7,0){\circle{8}}\put(34,0){\circle{8}}\put(20,-16){\circle*{8}}
 \put(11,0){\line(1,0){19}}\put(7,-4){\line(1,-1){10}}\put(8.5,-3){\line(1,-1){10}}\put(10,-2){\line(1,-1){10}}
                           \put(31,-2){\line(-1,-1){10}}\put(33,-2.5){\line(-1,-1){10}}\put(34,-4){\line(-1,-1){10}}

                           \put(12,-7){\line(1,0){8}} \put(12,-7){\line(0,-1){8}}

                           \put(29,-7){\line(-1,0){8}}\put(29,-7){\line(0,-1){8}}
                  \end{picture}
        \begin{picture}(60,30) \thicklines
\put(7,0){\circle{8}}\put(34,0){\circle{8}}\put(20,-16){\circle*{8}}
 \put(11,0){\line(1,0){19}}\put(7,-4){\line(1,-1){10}}\put(8.5,-3){\line(1,-1){10}}\put(10,-2){\line(1,-1){10}}
                           \put(31,-2){\line(-1,-1){10}}\put(33,-2.5){\line(-1,-1){10}}\put(34,-4){\line(-1,-1){10}}

                           \put(12,-7){\line(1,0){8}} \put(12,-7){\line(0,-1){8}}

                           \put(29,-7){\line(-1,0){8}}\put(29,-7){\line(0,-1){8}}
                    \qbezier(21,9)(23,10)(34,6) \qbezier(21,9)(19,10)(7,6)  
                    \put(7,6){\vector( -3, -1){0}}
\put(34,6){\vector( 3, -1){0}}
  \end{picture}
   
        \begin{picture}(60,30) \thicklines
\put(7,0){\circle{8}}\put(34,0){\circle{8}}\put(20,-16){\circle{8}}
 \put(11,0){\line(1,0){19}}\put(7,-4){\line(1,-1){10}}\put(8.5,-3){\line(1,-1){10}}\put(10,-2){\line(1,-1){10}}
                           \put(31,-2){\line(-1,-1){10}}\put(33,-2.5){\line(-1,-1){10}}\put(34,-4){\line(-1,-1){10}}

                           \put(12,-7){\line(1,0){8}} \put(12,-7){\line(0,-1){8}}

                           \put(29,-7){\line(-1,0){8}}\put(29,-7){\line(0,-1){8}}
                    \qbezier(21,9)(23,10)(34,6) \qbezier(21,9)(19,10)(7,6)  
                    \put(7,6){\vector( -3, -1){0}}
\put(34,6){\vector( 3, -1){0}}
  \end{picture}
  \vspace{1cm}
  
The Vogan diagrams of  $AG'G'_{3}$ \\
\begin{prop}The equivalence classes of this rank 3 hyperbolic Kac-Moody
algebra are given by
\begin{itemize}
\item  [(a)] $1$
 \item [(b)] $3$
 \item [(c)] $(1,2)$

\end{itemize}\end{prop}
\begin{proof}
 \begin{itemize}
 \item  [(a)] $1\sim (1,2,3)\sim 2$ by $F\langle 1,2\rangle$ 
 \item  [(b)] $3\sim3 $ by $F\langle 3\rangle$
 \item  [(c)] $(1,2)\sim(1,3)\sim (2,3)$ by $F\langle 1,1,2\rangle$
 \end{itemize}
 \end{proof}

    \begin{picture}(60,30) \thicklines
    \put(4,7){1}\put(31,7){2}\put(18,-28){3}
\put(7,0){\circle{8}}\put(34,0){\circle{8}}\put(20,-16){\circle{8}}
 \put(11,0){\line(1,0){19}}\put(7,-4){\line(1,-1){10}}\put(8.5,-3){\line(1,-1){10}}\put(10,-2){\line(1,-1){10}}
                           \put(31,-2){\line(-1,-1){10}}\put(33,-2.5){\line(-1,-1){10}}\put(34,-4){\line(-1,-1){10}}
                           
                           \put(6,-9){\line(1,0){8}} \put(14,-9){\line(0,1){8}}

                           \put(26,-9){\line(0,1){8}}\put(26,-9){\line(1,0){8}}
               \end{picture}
      \begin{picture}(60,30) \thicklines
\put(7,0){\circle{8}}\put(34,0){\circle{8}}\put(20,-16){\circle*{8}}
 \put(11,0){\line(1,0){19}}\put(7,-4){\line(1,-1){10}}\put(8.5,-3){\line(1,-1){10}}\put(10,-2){\line(1,-1){10}}
                           \put(31,-2){\line(-1,-1){10}}\put(33,-2.5){\line(-1,-1){10}}\put(34,-4){\line(-1,-1){10}}
                           
                           \put(6,-9){\line(1,0){8}} \put(14,-9){\line(0,1){8}}

                           \put(26,-9){\line(0,1){8}}\put(26,-9){\line(1,0){8}}
             \end{picture}
      \begin{picture}(60,30) \thicklines
\put(7,0){\circle{8}}\put(34,0){\circle{8}}\put(20,-16){\circle*{8}}
 \put(11,0){\line(1,0){19}}\put(7,-4){\line(1,-1){10}}\put(8.5,-3){\line(1,-1){10}}\put(10,-2){\line(1,-1){10}}
                           \put(31,-2){\line(-1,-1){10}}\put(33,-2.5){\line(-1,-1){10}}\put(34,-4){\line(-1,-1){10}}
                           
                           \put(6,-9){\line(1,0){8}} \put(14,-9){\line(0,1){8}}

                           \put(26,-9){\line(0,1){8}}\put(26,-9){\line(1,0){8}}
                    \qbezier(21,9)(23,10)(34,6) \qbezier(21,9)(19,10)(7,6)  
                    \put(7,6){\vector( -3, -1){0}}
\put(34,6){\vector( 3, -1){0}}
  \end{picture}
      \begin{picture}(60,30) \thicklines
\put(7,0){\circle{8}}\put(34,0){\circle*{8}}\put(20,-16){\circle{8}}
 \put(11,0){\line(1,0){19}}\put(7,-4){\line(1,-1){10}}\put(8.5,-3){\line(1,-1){10}}\put(10,-2){\line(1,-1){10}}
                           \put(31,-2){\line(-1,-1){10}}\put(33,-2.5){\line(-1,-1){10}}\put(34,-4){\line(-1,-1){10}}
                           
                           \put(6,-9){\line(1,0){8}} \put(14,-9){\line(0,1){8}}

                           \put(26,-9){\line(0,1){8}}\put(26,-9){\line(1,0){8}}
                     \end{picture}
        \begin{picture}(60,30) \thicklines
\put(7,0){\circle{8}}\put(34,0){\circle{8}}\put(20,-16){\circle{8}}
 \put(11,0){\line(1,0){19}}\put(7,-4){\line(1,-1){10}}\put(8.5,-3){\line(1,-1){10}}\put(10,-2){\line(1,-1){10}}
                           \put(31,-2){\line(-1,-1){10}}\put(33,-2.5){\line(-1,-1){10}}\put(34,-4){\line(-1,-1){10}}
                           
                           \put(6,-9){\line(1,0){8}} \put(14,-9){\line(0,1){8}}

                           \put(26,-9){\line(0,1){8}}\put(26,-9){\line(1,0){8}}
                    \qbezier(21,9)(23,10)(34,6) \qbezier(21,9)(19,10)(7,6)  
                    \put(7,6){\vector( -3, -1){0}}
\put(34,6){\vector( 3, -1){0}}
  \end{picture}
  \vspace{1cm}
  
 The Vogan diagram of $AC_{3,4}^{(1)}$\\
  \begin{prop} The equivalence classes of this rank 4 hyperbolic Kac-Moody
algebra are given by
\begin{itemize}
\item  [(a)] $1$
 \item [(b)] $2$

\end{itemize}\end{prop}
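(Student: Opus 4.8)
The plan is to classify by first shrinking every equivalence class to a single-painted representative and then fusing those representatives with the reflection operators. By Theorem 6.2 (Borel--de Siebenthal) and Corollary 6.3, each class of Vogan diagrams on the rank-four hyperbolic diagram $AC_{3,4}^{(1)}$ contains a diagram with at most one painted vertex; hence it suffices to study the four single-painted diagrams $(1),(2),(3),(4)$ and to decide, under the equivalence generated by diagram automorphisms and the operators $F[i]$ of Section 6, which of them coincide.

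First I would record the edge data of $AC_{3,4}^{(1)}$---which bonds are simple, which are double, and in which direction each arrow points---because the action of $F[i]$ is governed entirely by rules (i)--(iii): the colour of $i$ and of every non-adjacent vertex is unchanged, a long neighbour across a double bond is unchanged, and every remaining neighbour of $i$ has its colour reversed. Applying $F[i]$ to a single-painted diagram generally produces a multiply-painted one, which I would then reduce again toward a single painting using Lemma 6.4 and longer sequences $F\langle\cdots\rangle$, exactly as in the rank-three examples $AC_{2}^{(1)}$ and $AD_{3}^{(2)}$ treated above. I expect one chain of the form $(1)\sim(1,2,3)\sim\cdots$ that identifies $1$ with the remaining members of class (a), together with a short computation such as $(2)\sim(1,2,3,4)\sim(2)$ showing that the painting $(2)$ is stable and so forms class (b); wherever the diagram carries a symmetry, the corresponding automorphism supplies further identifications directly.

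The delicate step will be the bookkeeping at the double bonds of the $C$-type part of the diagram, since rule (ii) applies only when the painted vertex sits at the long end, so that a single misread arrow would either collapse (a) and (b) into one class or split one of them further. Once the chains are assembled, the genuinely substantive task is to check that $(1)$ and $(2)$ are not equivalent to each other, so that the two classes are distinct and the list (a)--(b) is exhaustive; this is the point I expect to require the most care, and it is in effect the content carried by the statement.
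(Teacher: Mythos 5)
There is a genuine gap: your text is a strategy outline, not a proof. Every concrete step is deferred (``I would record\ldots'', ``I expect one chain of the form $(1)\sim(1,2,3)\sim\cdots$'', ``this is the point I expect to require the most care''), so no equivalence among the four single-painted diagrams $(1),(2),(3),(4)$ is ever actually established, and the non-equivalence of $(1)$ and $(2)$ --- which you yourself flag as the substantive content --- is left completely open. A reduction-to-single-painting framework plus an unexecuted plan for $F[i]$ chains does not determine whether there are two, three, or four classes.

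What the paper actually does is much shorter, and it is the one tool you mention only in passing and never apply: the diagram automorphism. The Dynkin diagram of $AC_{3,4}^{(1)}$ is a four-cycle with the two double bonds on opposite edges and the two single bonds on the other pair of opposite edges; it therefore admits an evident reflection symmetry exchanging the vertices $1\leftrightarrow 4$ and $2\leftrightarrow 3$. This gives $1\sim 4$ and $2\sim 3$ immediately, so the single-painted diagrams fall into (at most) the two classes represented by $1$ and $2$, which is the statement. Your instinct that the double-bond bookkeeping for $F[i]$ is delicate is exactly why the symmetry argument is the right move here: it bypasses the reflection calculus entirely. To complete your argument you would need either to spot and invoke this automorphism explicitly, or to carry out the $F[i]$ computations you only sketch; as written, neither is done.
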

\begin{proof}
 \begin{itemize}
 \item  [(a)] By symmetry $1\sim4$
 \item  [(b)] By symmetry $2\sim3$ 
 \end{itemize}
 \end{proof}      
       
 \begin{picture}(60,30) \thicklines
 \put(7,0){\circle{8}}\put(34,0){\circle{8}}
 \put(4,0){\vertexbn}
 \put(7,-22){\circle{8}}\put(34,-22){\circle{8}}
 \put(4,-22){\vertexbn}
 \put(7,-4){\line(0,-1){14}}\put(34,-4){\line(0,-1){14}}
 \end{picture}
  \begin{picture}(60,30) \thicklines
 \put(7,0){\circle{8}}\put(34,0){\circle{8}}
 \put(4,0){\vertexbn}
 \put(7,-22){\circle{8}}\put(34,-22){\circle{8}}
 \put(4,-22){\vertexbn}
 \put(7,-4){\line(0,-1){14}}\put(34,-4){\line(0,-1){14}}
 \end{picture}
  \begin{picture}(60,30) \thicklines
 \put(7,0){\circle*{8}}\put(34,0){\circle{8}}
 \put(4,0){\vertexbn}
 \put(7,-22){\circle{8}}\put(34,-22){\circle{8}}
 \put(4,-22){\vertexbn}
 \put(7,-4){\line(0,-1){14}}\put(34,-4){\line(0,-1){14}}
 \end{picture}
  \begin{picture}(60,30) \thicklines
 \put(7,0){\circle{8}}\put(34,0){\circle*{8}}
 \put(4,0){\vertexbn}
 \put(7,-22){\circle{8}}\put(34,-22){\circle{8}}
 \put(4,-22){\vertexbn}
 \put(7,-4){\line(0,-1){14}}\put(34,-4){\line(0,-1){14}}
 \end{picture}
  \begin{picture}(60,30) \thicklines
 \put(7,0){\circle{8}}\put(34,0){\circle{8}}
 \put(4,0){\vertexbn}
 \put(7,-22){\circle{8}}\put(34,-22){\circle{8}}
 \put(4,-22){\vertexbn}
 \put(7,-4){\line(0,-1){14}}\put(34,-4){\line(0,-1){14}}
 
 \qbezier(-4,-10)(-3,-3)(1,0)    \qbezier(-4,-10)(-3,-17)(1,-22)
 \put(1,0){\vector( 1, 1){2}} \put(1,-22){\vector( 1, -1){2}}

 \qbezier(45,-10)(44,-3)(40,0)    \qbezier(45,-10)(44,-17)(40,-22)
  \put(40,0){\vector( -1, 1){2}} \put(40,-22){\vector( -1, -1){2}}
 \end{picture}
  
  \vspace{1.5cm}

      Vogan diagrams of $HG_{2}^{(1)}$\\
\begin{prop} The equivalence classes of this rank 4 hyperbolic Kac-Moody
algebra are given by
\begin{itemize}
\item  [(a)] $1$
 \item [(b)] $2\sim 4$
 \item [(c)] $3$
 \item [(d)] $4$
\end{itemize}\end{prop}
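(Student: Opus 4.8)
The plan is to proceed exactly as in Examples 2--6: first reduce an arbitrary painting of the rank-$4$ diagram $HG_2^{(1)}$ to a single painted vertex via the Borel--de Siebenthal reduction (Theorem 6.2 together with Corollary 6.3), and then decide which of the single-vertex representatives $1,2,3,4$ are mutually equivalent by chasing chains of $F(i)$ moves and diagram automorphisms. Since the listed items must exhaust these representatives, proving the proposition amounts to two tasks: (1) exhibit, for each asserted equivalence, an explicit chain $(i)\sim(\cdots)\sim(i')$ built from the combinatorial $F(i)$ rules (i)--(iii) and from symmetry; and (2) verify that the resulting classes are pairwise nonequivalent, so that no further collapses occur.

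Concretely, I would first fix the edges of the $HG_2^{(1)}$ diagram and record the position of the triple bond inherited from the $G_2$ subdiagram, noting in particular which of its two endpoints carries the long root, since this is the data governing the exceptional case (ii) of the $F(i)$ algorithm. For each candidate vertex $i$ I then apply $F(i)$: by rule (i) the colour of $i$ and of every nonadjacent vertex is preserved, by rule (iii) each adjacent vertex is reversed, and by rule (ii) a \emph{long} neighbour across the multiple bond is left unchanged. Reading off the multi-painted diagram so produced and reducing it again through further $F(j)$ moves yields the desired chains; wherever the underlying diagram admits a reflection I would record the corresponding identification ``by symmetry,'' just as in Example 2(a) and Example 3. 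I expect items (a), (c), and (d) to follow from the fixed-point computations $1\sim1$, $3\sim3$, $4\sim4$ under $F$ of their own painted vertex (compare the $G_3$-type examples), while the merger in item (b) should arise from a short chain $2\sim(\cdots)\sim4$ through a two- or three-painted intermediate, with the triple bond forcing the long/short bookkeeping of rule (ii).

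The main obstacle will be precisely this triple-edge bookkeeping: rule (ii) exempts only the long neighbour across the bond, so I must take care to reverse the short neighbour and to confirm that no stray sequence of $F$ moves secretly collapses $\{1\}$ or $\{3\}$ into the class of $\{2,4\}$. A second, and more pressing, point is that the statement as printed lists both ``$2\sim4$'' in (b) and ``$4$'' in (d), which cannot both describe distinct equivalence classes; part of the proof must therefore reconcile these, i.e.\ settle definitively whether the $F(i)$ chain genuinely identifies the paintings $2$ and $4$ or whether $4$ is in fact isolated. I would resolve this by computing $F(2)$ and $F(4)$ independently against the recorded edge data and comparing the orbits: if the long-root assignment places the triple bond away from vertices $2$ and $4$, the reflection rules act on them by the single-line prescription and the chain $2\sim4$ should close, whereas any long-neighbour exemption at $4$ would break it and leave four singleton classes. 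Establishing which of these alternatives holds is the crux of the argument, and everything else is the routine propagation of the colour-reversal rules along the chain.
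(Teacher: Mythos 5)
Your proposal stops short of being a proof: the entire content of such an argument is the explicit $F(i)$ chains, and you never produce them --- indeed you end by saying that deciding between the two alternatives ``is the crux of the argument,'' which is precisely the step left undone. The paper's own proof, by contrast, consists exactly of such chains (e.g.\ $2\sim(1,2,3)\sim(1,3,4)$ by $F\langle 2,3\rangle$). Worse, the one concrete instruction you give for running the computation is mistaken: the long-neighbour exemption, rule (ii), pertains only to \emph{double} edges. Across a triple edge the two Cartan integers are $1$ and $3$, both odd, so the colour of the neighbour always reverses; this is what the paper's earlier rule (``for single and triple lines \dots\ its immediate neighbour is changed to the opposite color'') records. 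Since the $HG_2^{(1)}$ diagram (vertices $1,2,3,4$ with a triple edge $\{1,2\}$ and single edges $\{2,3\}$, $\{3,4\}$) contains no double edge at all, every $F(i)$ here reverses all neighbours of $i$ unconditionally, and the dichotomy you propose to adjudicate (a long-neighbour exemption at vertex $4$ versus none) is vacuous.

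Moreover, had you carried out the computation you outline, you would have found that it refutes the proposition rather than proves it, so no bookkeeping could have closed your argument. With all neighbours flipping one gets $(1)\sim(1,2)\sim(2,3)\sim(3,4)\sim(4)$ by $F\langle 1,2,3,4\rangle$, and $(2)\sim(1,2,3)\sim(1,3,4)\sim(1,4)\sim(1,2,4)\sim(2,3,4)\sim(3)$ by $F\langle 2,3,4,1,2,3\rangle$; the set $\{(1),(4),(1,2),(2,3),(3,4)\}$ is closed under all $F$-moves and the diagram admits no nontrivial automorphism, so the genuine classes of single-painted diagrams are $1\sim 4$ and $2\sim 3$ --- not the four classes claimed. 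Your suspicion that items (b) and (d) cannot both stand was therefore correct, but the resolution is that the statement itself is false: $4$ is equivalent to $1$, not to $2$, and neither is isolated. The paper's own chains betray this: in (b), $F(4)$ applied to $(1,3,4)$ yields $(1,4)$, not $(4)$, since vertex $1$ is not adjacent to vertex $4$; and chain (d), $4\sim(3,4)\sim(2,3)\sim(1,2)$, needs only one further move $F(1)$ to reach $(1)$, merging the classes the proposition lists as (a) and (d).
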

\begin{proof}
 \begin{itemize}
 \item  [(a)] $1\sim 1$ by $F\langle 1\rangle$
 \item  [(b)]  $2\sim(1,2,3)\sim(1,3,4)\sim4$ by $F\langle 2,3,4\rangle$
 \item  [(c)] $3\sim(2,3,4)\sim(2,4)$ by $F\langle 3,4\rangle$
 \item  [(d)] $4\sim(3,4)\sim(2,3)\sim(1,2)$
 \end{itemize}
 \end{proof}

     \begin{picture}(60,30) \thicklines
    \put(7,0){\circle{8}}\put(34,0){\circle{8}}\put(61,0){\circle{8}}\put(88,0){\circle{8}}
    \put(4,0){\vertexccn}\put(38,0){\line(1,0){19}}\put(65,0){\line(1,0){19}}
    \put(7,10){\makebox(0,0){$1$}}\put(34,10){\makebox(0,0){$2$}}\put(61,10){\makebox(0,0){$3$}}
\put(88,10){\makebox(0,0){$4$}}
     \end{picture}
     
     \begin{picture}(60,30) \thicklines
    \put(7,0){\circle{8}}\put(34,0){\circle{8}}\put(61,0){\circle*{8}}\put(88,0){\circle{8}}
    \put(4,0){\vertexccn}\put(38,0){\line(1,0){19}}\put(65,0){\line(1,0){19}}
    \put(7,10){\makebox(0,0){$1$}}\put(34,10){\makebox(0,0){$2$}}\put(61,10){\makebox(0,0){$3$}}
\put(88,10){\makebox(0,0){$4$}}
     \end{picture}
     
      \begin{picture}(60,30) \thicklines
    \put(7,0){\circle*{8}}\put(34,0){\circle{8}}\put(61,0){\circle{8}}\put(88,0){\circle{8}}
    \put(4,0){\vertexccn}\put(38,0){\line(1,0){19}}\put(65,0){\line(1,0){19}}
    \put(7,10){\makebox(0,0){$1$}}\put(34,10){\makebox(0,0){$2$}}\put(61,10){\makebox(0,0){$3$}}
\put(88,10){\makebox(0,0){$4$}}
      \end{picture}
     
     \begin{picture}(60,30) \thicklines
    \put(7,0){\circle{8}}\put(34,0){\circle*{8}}\put(61,0){\circle{8}}\put(88,0){\circle{8}}
    \put(4,0){\vertexccn}\put(38,0){\line(1,0){19}}\put(65,0){\line(1,0){19}}
    \put(7,10){\makebox(0,0){$1$}}\put(34,10){\makebox(0,0){$2$}}\put(61,10){\makebox(0,0){$3$}}
\put(88,10){\makebox(0,0){$4$}}
     \end{picture}
     
      \begin{picture}(60,30) \thicklines
    \put(7,0){\circle{8}}\put(34,0){\circle{8}}\put(61,0){\circle{8}}\put(88,0){\circle*{8}}
    \put(4,0){\vertexccn}\put(38,0){\line(1,0){19}}\put(65,0){\line(1,0){19}}
    \put(7,10){\makebox(0,0){$1$}}\put(34,10){\makebox(0,0){$2$}}\put(61,10){\makebox(0,0){$3$}}
\put(88,10){\makebox(0,0){$4$}}
      \end{picture}
     
     \vspace{1cm}
     
The Vogan diagram of $HF_{4}^{(1)}$\\
\begin{prop} The equivalence classes of this rank 6 hyperbolic Kac-Moody
algebra are given by
\begin{itemize}
\item  [(a)] $1$
 \item [(b)] $2$
 \item [(c)] $3$
 \item [(d)] $4$
 \item [(e)] $5$
\end{itemize}\end{prop}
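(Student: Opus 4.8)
The plan is to reduce the whole classification to single-painted diagrams and then to sort those into orbits. First I would apply Corollary 6.3: because $HF_4^{(1)}$ is a hyperbolic Dynkin diagram, every painting on it can be simplified, through a finite sequence of the reflections $F\langle i\rangle$, to a painting with a single painted vertex. Hence each equivalence class of Vogan diagrams has a representative in which exactly one node is coloured, and the problem reduces to deciding, among the six one-node paintings $(1),(2),(3),(4),(5),(6)$, which are mutually equivalent under the relation generated by diagram automorphisms and the reflections $F\langle i\rangle$ in noncompact simple roots. Writing down these six paintings against the $HF_4^{(1)}$ diagram is the first concrete step.

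Next I would compute, vertex by vertex, the effect of each admissible $F\langle i\rangle$ using the combinatorial rules (i)--(iii): the painted node and all non-neighbours retain their colour, single-bond neighbours flip, and across the double bond inherited from the $F_4$ part of the diagram the long neighbour is fixed while the short neighbour flips (rule (ii)). Whenever an intermediate two- or three-node painting appears I would clear it back to a single node with Lemma 6.4, exactly as in the $E_{10}$ reduction carried out earlier. I expect this bookkeeping to show that applying $F\langle i\rangle$ to a one-node painting at $i$ returns a painting equivalent to the same node, so that the six paintings remain largely separate; the asymmetry created by the double bond is what I expect to block the neighbouring nodes from collapsing together, leaving the five classes (a)--(e). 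The delicate part here is tracking the colours correctly at the short and long ends of the double bond, where rule (ii) behaves differently from the single-bond case.

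The step I expect to be the genuine obstacle is not exhibiting the equivalences but proving that the listed representatives are pairwise \emph{inequivalent}. Rules (i)--(iii) and Lemma 6.4 only ever generate new equivalences; on their own they give no certificate that two one-node paintings lie in different orbits, so the bare $F$-sequence computation pins down the classes only up to possible further collapse. To close this gap I would attach to each class an invariant that is manifestly preserved by every $F\langle i\rangle$ and by the diagram automorphisms --- for instance the isomorphism type of the maximal compact subalgebra $\mathfrak{k}_0$ of the associated real form, or an invariant count of noncompact imaginary roots --- and verify that its values on $(1),\dots,(5)$ are distinct. Computing one such invariant and checking that it separates the five representatives is what would turn the list of $F$-moves into a complete proof.
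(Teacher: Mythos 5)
Your overall framework (reduce every painting to a single painted vertex via Corollary 6.3, then sort the six single-painted diagrams by explicit $F$-moves) is the same one the paper uses implicitly, but the decisive computation is left as an ``expectation,'' and the expectation is wrong at exactly the point that carries the content of the proposition. The diagram $HF_4^{(1)}$ has six vertices while the statement lists only five classes, so the essential fact to prove is that exactly one pair of single-painted diagrams collapses, namely $(6)\sim(5)$. The paper's proof exhibits this directly: by rule (ii), applying $F\langle 5\rangle$ to $(5)$ leaves the long neighbour $4$ across the double bond unchanged and flips only the single-bond neighbour $6$, giving $(5)\sim(5,6)$, and then $F\langle 6\rangle$ gives $(5,6)\sim(6)$. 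Your reading of the double bond is the opposite: you expect its asymmetry to ``block the neighbouring nodes from collapsing together'' so that ``the six paintings remain largely separate,'' which would yield six classes, not five. In fact rule (ii) is what \emph{enables} the collapse: it stops the reflection at the short vertex $5$ from propagating across the double bond to $4$, so the orbit of $(5)$ stays inside $\{(5),(5,6),(6)\}$; on the long side, by contrast, each reflection pushes the painting along the chain ($1\sim(1,2)\sim(2,3)\sim(3,4)\sim(4,5)\sim(4,5,6)\sim(4,6)$, and similarly for $2$, $3$, $4$), never returning to a different single vertex, which is why $1,2,3,4$ stay separate. Without identifying $(5)\sim(6)$ your argument cannot arrive at the stated list; note also that the paper's proof of this proposition uses only such direct $F$-move chains, not Lemma 6.4, whose labelling conventions are tied to the $E$-type diagrams.

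Your closing point --- that $F$-moves only ever certify equivalences, so some invariant is needed to show the five representatives are pairwise inequivalent --- is methodologically sound, and it exposes a genuine gap in the paper itself: the published proof consists solely of the five chains of $F$-moves and offers no inequivalence certificate. But in your proposal this step is also only announced (isomorphism type of $\mathfrak{k}_0$, counts of noncompact imaginary roots) and never computed for this diagram. As written, then, the proposal establishes neither the one equivalence that is actually needed, $(5)\sim(6)$, nor the distinctness of the classes $1,\dots,5$, so it does not yet prove the proposition.
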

\begin{proof}
 \begin{itemize}
 \item  [(a)]  $1\sim(1,2)\sim(2,3)\sim(3,4)\sim(4,5)\sim(4,5,6)\sim(4,6)$ by $F\langle 1,2,3,4,5,6\rangle$
 \item  [(b)]  $2\sim(1,2,3)\sim(1,3,4)\sim(1,4,5)\sim(1,4,5,6)\sim(1,4,6)$ by $F\langle2,3,4,5,6\rangle$
 \item  [(c)]  $3\sim(2,3,4)\sim(2,4,5)\sim(2,4,5,6)\sim(2,4,5,6)\sim(2,4,6)$ by $F\langle 3,4,5,6\rangle$
 \item  [(d)]  $4\sim(3,4,5)\sim(3,4,5,6)\sim(3,4,6)$ by $F\langle 4,5,6\rangle$
 \item  [(e)]  $5\sim(5,6)\sim 6$ by $F\langle 5,6\rangle$
 \end{itemize}
 \end{proof}

\begin{picture}(60,30) \thicklines
\put(7,0){\circle*{8}}\put(34,0){\circle{8}}\put(61,0){\circle{8}}\put(88,0){\circle{8}}\put(115,0){\circle{8}}\put(142,0){\circle{8}}
\put(11,0){\line(1,0){19}}\put(38,0){\line(1,0){19}}\put(65,0){\line(1,0){19}}\put(85,0){\vertexbn}\put(119,0){\line(1,0){19}}
\put(7,10){\makebox(0,0){$1$}}\put(34,10){\makebox(0,0){$2$}}\put(61,10){\makebox(0,0){$3$}}
\put(88,10){\makebox(0,0){$4$}}\put(115,10){\makebox(0,0){$5$}}\put(142,10){\makebox(0,0){$6$}}
\end{picture} 

  \begin{picture}(60,30) \thicklines
\put(7,0){\circle{8}}\put(34,0){\circle*{8}}\put(61,0){\circle{8}}\put(88,0){\circle{8}}\put(115,0){\circle{8}}\put(142,0){\circle{8}}
\put(11,0){\line(1,0){19}}\put(38,0){\line(1,0){19}}\put(65,0){\line(1,0){19}}\put(85,0){\vertexbn}\put(119,0){\line(1,0){19}}
\put(7,10){\makebox(0,0){$1$}}\put(34,10){\makebox(0,0){$2$}}\put(61,10){\makebox(0,0){$3$}}
\put(88,10){\makebox(0,0){$4$}}\put(115,10){\makebox(0,0){$5$}}\put(142,10){\makebox(0,0){$6$}}
\end{picture} 

\begin{picture}(60,30) \thicklines
\put(7,0){\circle{8}}\put(34,0){\circle{8}}\put(61,0){\circle*{8}}\put(88,0){\circle{8}}\put(115,0){\circle{8}}\put(142,0){\circle{8}}
\put(11,0){\line(1,0){19}}\put(38,0){\line(1,0){19}}\put(65,0){\line(1,0){19}}\put(85,0){\vertexbn}\put(119,0){\line(1,0){19}}
\put(7,10){\makebox(0,0){$1$}}\put(34,10){\makebox(0,0){$2$}}\put(61,10){\makebox(0,0){$3$}}
\put(88,10){\makebox(0,0){$4$}}\put(115,10){\makebox(0,0){$5$}}\put(142,10){\makebox(0,0){$6$}}
\end{picture}

\begin{picture}(60,30) \thicklines
\put(7,0){\circle{8}}\put(34,0){\circle{8}}\put(61,0){\circle{8}}\put(88,0){\circle*{8}}\put(115,0){\circle{8}}\put(142,0){\circle{8}}
\put(11,0){\line(1,0){19}}\put(38,0){\line(1,0){19}}\put(65,0){\line(1,0){19}}\put(85,0){\vertexbn}\put(119,0){\line(1,0){19}}
\put(7,10){\makebox(0,0){$1$}}\put(34,10){\makebox(0,0){$2$}}\put(61,10){\makebox(0,0){$3$}}
\put(88,10){\makebox(0,0){$4$}}\put(115,10){\makebox(0,0){$5$}}\put(142,10){\makebox(0,0){$6$}}
\end{picture} 

\begin{picture}(60,30) \thicklines
\put(7,0){\circle{8}}\put(34,0){\circle{8}}\put(61,0){\circle{8}}\put(88,0){\circle{8}}\put(115,0){\circle*{8}}\put(142,0){\circle{8}}
\put(11,0){\line(1,0){19}}\put(38,0){\line(1,0){19}}\put(65,0){\line(1,0){19}}\put(85,0){\vertexbn}\put(119,0){\line(1,0){19}}
\put(7,10){\makebox(0,0){$1$}}\put(34,10){\makebox(0,0){$2$}}\put(61,10){\makebox(0,0){$3$}}
\put(88,10){\makebox(0,0){$4$}}\put(115,10){\makebox(0,0){$5$}}\put(142,10){\makebox(0,0){$6$}}
\end{picture}

\begin{picture}(60,30) \thicklines
\put(7,0){\circle{8}}\put(34,0){\circle{8}}\put(61,0){\circle{8}}\put(88,0){\circle{8}}\put(115,0){\circle{8}}\put(142,0){\circle*{8}}
\put(11,0){\line(1,0){19}}\put(38,0){\line(1,0){19}}\put(65,0){\line(1,0){19}}\put(85,0){\vertexbn}\put(119,0){\line(1,0){19}}
\put(7,10){\makebox(0,0){$1$}}\put(34,10){\makebox(0,0){$2$}}\put(61,10){\makebox(0,0){$3$}}
\put(88,10){\makebox(0,0){$4$}}\put(115,10){\makebox(0,0){$5$}}\put(142,10){\makebox(0,0){$6$}}
\end{picture}

\vspace{0.5cm}

The Vogan diagram of $HA_{2}^{(2)}$ 
\begin{prop}
 The equivalence clasess of Vogan diagrams on rank 3 hyperbolic Kac-Moody algebra are given by
 \begin{itemize}
  \item[(a)] $1\sim2$
  \item [(b)]$3$
 \end{itemize}
 \end{prop}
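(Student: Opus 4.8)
The plan is to prove both equivalences by the combinatorial reflection method used throughout the preceding rank-$3$ examples: I apply the operators $F\langle i\rangle$, whose effect on the painting is governed by the rules (i)--(iii) for $F[i]$, supplemented by any diagram automorphism admitted by $HA_2^{(2)}$.

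First I would treat part (a). Starting from the single painting $(1)$, I reflect in the noncompact simple root $1$; by rule (iii) this keeps $1$ painted and reverses the colors of the vertices adjacent to it, producing an intermediate painting of the form $(1,2)$ (or $(1,2,3)$, according to the adjacencies). A further reflection $F\langle 2\rangle$ then leaves $2$ painted while returning the remaining vertices to the compact state, so that
$$(1) \sim (1,2) \sim (2) \quad\text{by } F\langle 1,2\rangle ,$$
and hence $1\sim 2$. Should $HA_2^{(2)}$ possess the interchange symmetry of vertices $1$ and $2$, this already follows from the diagram automorphism alone, exactly as in the analogous cases $AD_3^{(2)}$ and $AGG_3$.

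For part (b) I would show that the overextending vertex $3$ is isolated in its own class. I reflect in the noncompact root $3$ and verify, via rules (i)--(iii), that $F\langle 3\rangle$ carries $(3)$ back to itself (equivalently, passing through $(1,2,3)$ and returning); since $(3)$ is then stable under every available move and since each $F\langle i\rangle$ and each automorphism preserves the discrete invariant separating the two classes, $(3)$ cannot be brought to $(1)$ or $(2)$.

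The step I expect to be the main obstacle is the color bookkeeping across the multiple bond inherited from the twisted affine algebra $A_2^{(2)}$. Rules (ii)--(iii) are phrased only for single and double edges, whereas $A_2^{(2)}$ carries a quadruple bond; before the chains above can be declared valid I would first determine, directly from the action of the simple reflection $s_i$ on the root lattice, which neighbouring vertex has its color reversed and which is fixed under reflection across that bond.
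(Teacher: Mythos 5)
Your instinct to settle everything by the reflection rules is the right one, and you have correctly located the crux: rules (i)--(iii) do not cover the quadruple bond of $A_2^{(2)}$. But your proposal never resolves that crux, and the two halves of your argument silently assume \emph{opposite} resolutions, so as written the proof does not hang together. The root-lattice computation you postpone gives: vertex $j$ changes colour under $F\langle i\rangle$ iff $\langle\alpha_j,\alpha_i^\vee\rangle$ is odd; across a quadruple bond the two pairings are $-1$ and $-4$, so reflecting in the \emph{long} end flips the short neighbour, while reflecting in the \emph{short} end leaves the long neighbour unchanged. Now test your chains. If the middle vertex $2$ is the short end of the bond, then $F\langle 2\rangle$ flips only vertex $1$, and your chain $(1)\sim(1,2)\sim(2)$ is valid; but then vertex $3$ is long, so $F\langle 3\rangle$ sends $(3)$ to $(2,3)$, your claim in (b) that $F\langle 3\rangle$ fixes $(3)$ is false, and (b) instead requires computing the full orbit $\{(3),(2,3),(1,3),(1,2,3)\}$ and checking it misses $(1)$ and $(2)$. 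If instead vertex $3$ is the short end, then (b) is correct exactly as you state it (indeed $F\langle 3\rangle$ is the only available move and it fixes $(3)$), but now $F\langle 2\rangle$ flips \emph{both} neighbours, your (a)-chain degenerates to $(1)\sim(1,2)\sim(2,3)$, and the orbits are $\{(1),(1,2),(2,3)\}$, $\{(2),(1,3),(1,2,3)\}$, $\{(3)\}$, so $1\not\sim 2$ and part (a) is false for that orientation. Hence the computation you defer is not bookkeeping: it decides which of your two chains is legitimate and even whether the proposition is true. Two further weaknesses: the hedge about a $1\leftrightarrow 2$ diagram automorphism is vacuous (vertex $1$ sits on a single edge, vertex $2$ on a single and a quadruple edge, so no such automorphism exists), and the ``discrete invariant separating the two classes'' invoked in (b) is never exhibited, which makes that step circular.

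For comparison, the paper's own proof carries the same unresolved ambiguity: its chain $1\sim(1,2)\sim(1,2,3)\sim 2$ contains a step impossible under \emph{any} orientation (vertex $1$, joined to $2$ by a single edge, must flip under $F\langle 2\rangle$, so $(1,2)$ cannot go to $(1,2,3)$), and its part (b), $3\sim 3$ by $F\langle 3\rangle$, presupposes vertex $3$ short, while the truth of part (a) presupposes vertex $2$ short. So you have reproduced the paper's method together with its gap. A correct treatment must first pin down the orientation of the quadruple bond from the Cartan matrix of $HA_2^{(2)}$ (for the standard over-extension the affine node $\alpha_0$, your vertex $2$, is the short end), and then run the orbit computations above; with that orientation your chain for (a) stands and (b) must be replaced by the four-element orbit argument.
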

 \begin{proof}
  \begin{itemize}
   \item [(a)]$1\sim(1,2)\sim (2,3)\sim2$ by $F\langle 1,2\rangle$
   \item[(b)] $3\sim3$ by $F\langle 3\rangle$
    \item [(c)]$1\sim(1,3)\sim (1,2,3)\sim2$ by $F\langle 1,2\rangle$
  \end{itemize}

 \end{proof}

\vspace{0.5cm}
 \begin{picture}(160,30) \thicklines
\put(7,0){\circle{8}}\put(34,0){\circle{8}}\put(61,0){\circle{8}}
\put(11,0){\line(1,0){19}}\put(32,0){\vertexbbbn}
\put(7,10){\makebox(0,0){$1$}}\put(34,10){\makebox(0,0){$2$}}\put(61,10){\makebox(0,0){$3$}}
  \end{picture}
  \begin{picture}(160,30) \thicklines
\put(7,0){\circle*{8}}\put(34,0){\circle{8}}\put(61,0){\circle{8}}
\put(11,0){\line(1,0){19}}\put(32,0){\vertexbbbn}
\put(7,10){\makebox(0,0){$1$}}\put(34,10){\makebox(0,0){$2$}}\put(61,10){\makebox(0,0){$3$}}
  \end{picture}
  
\vspace{0.5cm}
  \begin{picture}(160,30) \thicklines
\put(7,0){\circle{8}}\put(34,0){\circle{8}}\put(61,0){\circle*{8}}
\put(11,0){\line(1,0){19}}\put(32,0){\vertexbbbn}
\put(7,10){\makebox(0,0){$1$}}\put(34,10){\makebox(0,0){$2$}}\put(61,10){\makebox(0,0){$3$}}
  \end{picture}
  \begin{picture}(160,30) \thicklines
\put(7,0){\circle*{8}}\put(34,0){\circle{8}}\put(61,0){\circle*{8}}
\put(11,0){\line(1,0){19}}\put(32,0){\vertexbbbn}
\put(7,10){\makebox(0,0){$1$}}\put(34,10){\makebox(0,0){$2$}}\put(61,10){\makebox(0,0){$3$}}
  \end{picture}
  \vspace{0.5cm}
  
The Vogan diagrams of  $H'A_2^{(2)}$
\begin{prop}
 The equivalence clasess of Vogan diagrams on rank 3 hyperbolic Kac-Moody algebra are given by
 \begin{itemize}
  \item [(a)] $1\sim 2$
  \item [(b)] $3$
 \end{itemize}

\end{prop}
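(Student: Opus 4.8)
The plan is to reduce exactly as in the preceding rank-$3$ examples, combining the combinatorial $F[i]$ rules (i)--(iii) with the Borel--de Siebenthal reduction of Theorem 6.2. Theorem 6.2 guarantees that every equivalence class on this hyperbolic Dynkin diagram has a representative painted at most one vertex, so it suffices to compare the three diagrams painted only at $1$, only at $2$, and only at $3$, and to decide which pairs are $F$-equivalent. The entire argument is then a matter of exhibiting explicit $F$-reflection chains together with any available diagram symmetry.

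For part (a) I would connect $(1)$ and $(2)$ by an explicit reflection chain. Painting vertex $1$ and applying $F[1]$ reverses the colour of its neighbour (vertex $2$) by rule (iii), giving $(1,2)$; a further reflection pushes the painting across the bond toward vertex $3$ to reach $(1,2,3)$, and a final reflection at $2$ strips the paintings off the endpoints, leaving only $2$ painted. This realises $1\sim(1,2)\sim(1,2,3)\sim 2$, mirroring the $HA_2^{(2)}$ computation immediately above.

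For part (b) I would verify that vertex $3$ cannot be unpainted or transferred. Applying $F[3]$ to the diagram painted only at $3$ fixes it, since rule (i) keeps $3$ painted while rules (ii)--(iii) govern only the neighbour across the bold bond, so $3\sim 3$; and no admissible reflection carries the painting from $3$ onto $1$ or $2$. Together with Theorem 6.2 this isolates $(3)$ as a genuine second equivalence class, disjoint from $\{(1),(2)\}$.

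The main obstacle I anticipate is tracking the colour changes across the multiple/bold bond between vertices $2$ and $3$: there the long-root exception of rule (ii) must replace the generic rule (iii), and this is precisely where a careless application would either spuriously merge the two classes or fail to close the chain $1\sim\dots\sim 2$. Once the bond multiplicity and arrow orientation are read off from the diagram of $H'A_2^{(2)}$, the reflection sequence is forced and the remaining verification is mechanical.
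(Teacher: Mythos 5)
Your structural opening (reduce via Theorem 6.2 to comparing the single-painted diagrams $(1)$, $(2)$, $(3)$) is reasonable, but both of your reflection computations read the quadruple bond of $H'A_2^{(2)}$ in the wrong direction, and this breaks the proof. In this diagram the arrow points from $3$ toward $2$, i.e. $\langle\alpha_2,\alpha_3^\vee\rangle=-1$ while $\langle\alpha_3,\alpha_2^\vee\rangle=-4$; since the colour of a neighbour $j$ flips under $F[i]$ exactly when the Cartan integer $\langle\alpha_j,\alpha_i^\vee\rangle$ is odd, the reflection $F[3]$ \emph{does} repaint vertex $2$, whereas $F[2]$ leaves vertex $3$ untouched. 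This is the opposite of what you assert in both (a) and (b). In particular your claim in (b) that $F[3]$ fixes $(3)$ and that no reflection carries paint off vertex $3$ is false: the paper's own proof runs $3\sim(2,3)\sim(1,2,3)\sim(1,3)$ by $F\langle 3,2,1\rangle$, so the class of $3$ is $\{(3),(2,3),(1,2,3),(1,3)\}$, not $\{(3)\}$. The class is indeed distinct from that of $1$ and $2$, but not for the reason you give, and your appeal to Theorem 6.2 cannot supply the missing disjointness, since that theorem only bounds the number of painted vertices of some representative.

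Your chain in (a) is also invalid. From $(1,2)$ the only available moves are $F[1]:(1,2)\mapsto(1)$ and $F[2]:(1,2)\mapsto(2)$ (vertex $3$ is unchanged because $-4$ is even), so the link $(1,2)\sim(1,2,3)$ cannot be realized; the correct chain is simply $1\sim(1,2)\sim 2$, which is exactly the paper's proof of (a). Worse, if your chain were valid it would refute the proposition you are proving: $(1,2,3)$ lies in the class of $3$, so $(1,2)\sim(1,2,3)$ would merge the two classes into one. The chain you transplanted from the $HA_2^{(2)}$ example cannot be reused here, because the two algebras differ precisely in the orientation of the quadruple bond, and that orientation is what governs the colour propagation — this is the very pitfall you flagged in your last paragraph, and it is where the argument fails. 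Finally, your account never mentions the paintings $(2,3)$ and $(1,3)$ at all, whereas the paper's two chains together exhaust all seven nonempty paintings and sort them into the two stated classes.
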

\begin{proof}
 \item [(a)] $1\sim(1,2)\sim2$ by $F\langle 1,2,2\rangle$
 \item [(b)] $3\sim(2,3)\sim(1,2,3)\sim(1,3)$ by $F\langle 3,2,1\rangle$
\end{proof}

\vspace{0.5cm}
  \begin{picture}(160,30) \thicklines
\put(7,0){\circle{8}}\put(34,0){\circle{8}}\put(61,0){\circle{8}}
\put(11,0){\line(1,0){19}}\put(32,0){\vertexcccn}
\put(7,10){\makebox(0,0){$1$}}\put(34,10){\makebox(0,0){$2$}}\put(61,10){\makebox(0,0){$3$}}
  \end{picture}
  
 \vspace{0.5cm}
  \begin{picture}(160,30) \thicklines
\put(7,0){\circle*{8}}\put(34,0){\circle{8}}\put(61,0){\circle{8}}
\put(11,0){\line(1,0){19}}\put(32,0){\vertexcccn}
\put(7,10){\makebox(0,0){$1$}}\put(34,10){\makebox(0,0){$2$}}\put(61,10){\makebox(0,0){$3$}}
  \end{picture} 
  
  \vspace{0.5cm}
  \begin{picture}(160,30) \thicklines
\put(7,0){\circle{8}}\put(34,0){\circle{8}}\put(61,0){\circle*{8}}
\put(11,0){\line(1,0){19}}\put(32,0){\vertexcccn}
\put(7,10){\makebox(0,0){$1$}}\put(34,10){\makebox(0,0){$2$}}\put(61,10){\makebox(0,0){$3$}}
  \end{picture}
  
\vspace{0.5cm}
The Vogan diagrams of  $HC_{n}^{1}$\\
\begin{prop}The equivalence classes of this rank 6 hyperbolic Kac-Moody
algebra are given by
\begin{itemize}
\item  [(a)] $1\sim 2$ 
 \item [(b)] $3\sim 5$
 \item [(c)] $4$
 \item [(d)] $5$
\end{itemize}\end{prop}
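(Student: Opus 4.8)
The plan is to argue exactly as in the preceding propositions (Examples 4--6 and the $HF_4^{(1)}$, $HG_2^{(1)}$ cases above), since $HC_n^{(1)}$ is another hyperbolic over-extension of an affine diagram carrying multiple bonds. By Theorem 6.2 every equivalence class of overextended Vogan diagrams on $HC_n^{(1)}$ has a representative with at most one painted vertex, so the entire problem reduces to deciding which of the single-vertex paintings $(1),(2),(3),(4),(5)$ are carried into one another by the two operations that generate the equivalence: application of a diagram automorphism, and the reflection $F[i]$ in a painted simple root $i$. Throughout I will use the purely combinatorial description of $F[i]$ recorded in Section 6.1: the colours of $i$ and of every vertex not adjacent to $i$ are left unchanged; the colour of a long neighbour of $i$ joined to it by a double edge is left unchanged; and the colour of every remaining neighbour of $i$ is reversed.

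First I would pin down the diagram data of $HC_n^{(1)}$, namely the positions of its double bonds, the long/short assignment at each such bond (equivalently, the direction of each arrow), and the nontrivial diagram automorphism exchanging the two ends of the chain. This data is exactly what is needed to evaluate each $F[i]$, and recording it is the indispensable preliminary; in particular I would first fix the vertex labelling so that the statement's list $(1),\dots,(5)$ is unambiguous.

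Then, for each asserted relation I would exhibit an explicit sequence $F\langle i_1,i_2,\dots\rangle$, interleaved with the symmetry where convenient, of the same short type used in the earlier examples. For part (a) I expect a chain of the form $(1)\sim(1,2,\dots)\sim\cdots\sim(2)$, obtained by first applying $F(1)$ to paint the neighbours of vertex $1$ and then peeling the surplus painted vertices off one at a time; part (b), $3\sim5$, should come either directly from the end-exchanging automorphism or from a comparably short reflection chain; and parts (c)--(d) record the paintings $(4)$ and $(5)$ that survive as their own classes because no admissible move connects them to the rest.

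The main obstacle will be the bookkeeping at the double bonds. Since $C_n^{(1)}$ carries a double edge at each end of the chain, the second and third $F[i]$-rules interact: whether a reflection at a vertex incident to a double edge flips its neighbour depends on the long/short assignment there, and it is precisely this long-root exception that determines whether two single-vertex paintings merge or stay separate. The delicate point is therefore twofold: to check that every step of each chain I write down honours the long-root rule, and, conversely, to confirm that the isolated classes $(4)$ and the correct representative of $3\sim5$ genuinely cannot be connected to the others, which amounts to verifying that every reflection available at those paintings is blocked either by non-adjacency or by the long-neighbour exception.
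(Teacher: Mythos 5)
Your overall strategy coincides with the paper's: invoke the Borel--de Siebenthal-type reduction to single-painted representatives, then connect those representatives by explicit reflection chains. Indeed the paper's proof consists of nothing but such chains: $1\sim(1,2)\sim(1,2,3)$ and $2\sim(1,2,3)$ for (a), $3\sim(3,4)\sim(4,5)\sim5$ for (b), $4\sim(3,4,5)\sim(3,5)$ for (c), and $5\sim(4,5)\sim(5,6)$ for (d). However, your proposal stops short of computing any chain, and two of the substitutes you sketch would fail. First, the fallback of an ``end-exchanging automorphism'' for $3\sim5$ does not exist: the affine subdiagram $C_n^{(1)}$ on vertices $2,\dots,6$ (double bonds $2\Rightarrow3$ and $5\Leftarrow6$, arrows pointing inward) is flip-symmetric, but the over-extended vertex $1$ is attached to only one end, so $HC_n^{(1)}$ admits only the trivial diagram automorphism; the relation $3\sim5$ can only come from a reflection chain, which is what the paper writes down. (Also, on a rank $6$ diagram the single-painted candidates are $(1),\dots,(6)$, not $(1),\dots,(5)$; vertex $6$ cannot simply be dropped from the analysis.)

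Second, and more seriously, your method for showing that $(4)$ (and, on your reading, $(5)$) form separate classes rests on a misunderstanding of $F[i]$. A reflection at a painted vertex is never ``blocked'': the long-root exception only determines which neighbours change colour, not whether the move can be made. Applying $F[4]$ to the painting $(4)$ flips both of its single-edge neighbours and yields $(3,4,5)$ --- this is exactly the paper's own chain (c) --- so the class of $(4)$ is not move-isolated at all; what is true at best is that its orbit contains no other \emph{single-painted} diagram, and establishing that requires enumerating the full orbit under all $F$-sequences or exhibiting an invariant, neither of which your plan (nor, admittedly, the paper) supplies. Finally, there is a contradiction you glossed over rather than resolved: you cannot both prove $3\sim5$ (part (b)) and prove that $(5)$ is an isolated class (part (d)). The paper's chains (b) and (d) both pass through the painting $(4,5)$, so they describe one and the same equivalence class, and a proof that $(5)$ is isolated would be a proof of a false statement.
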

\begin{proof}
 \begin{itemize}
 \item  [(a)] $1\sim (1,2)\sim(1,2,3)$ by $F\langle 1,2\rangle$ and $2\sim (1,2,3) $. So $1\sim 2$
 \item  [(b)] $3\sim (3,4)\sim (4,5)\sim 5$ by $F\langle 2,3,4,5\rangle$
 \item  [(c)] $4\sim (3,4,5)\sim (3,5)$ by $F\langle 4,5\rangle$
 \item  [(d)] $5\sim (4,5)\sim (5,6)$ by $F\langle 6,5\rangle$
 \end{itemize}
 \end{proof}

 \begin{picture}(60,30) \thicklines
\put(7,0){\circle{8}}\put(34,0){\circle{8}}\put(61,0){\circle{8}}\put(88,0){\circle{8}}\put(115,0){\circle{8}}\put(142,0){\circle{8}}
\put(11,0){\line(1,0){19}}\put(31,0){\vertexbn}\put(65,0){\line(1,0){19}}\put(92,0){\line(1,0){19}}\put(113,0){\vertexcn}
\put(7,10){\makebox(0,0){$1$}}\put(34,10){\makebox(0,0){$2$}}\put(61,10){\makebox(0,0){$3$}}
\put(88,10){\makebox(0,0){$4$}}\put(115,10){\makebox(0,0){$5$}}\put(142,10){\makebox(0,0){$6$}}
\end{picture}  

 \begin{picture}(60,30) \thicklines
\put(7,0){\circle*{8}}\put(34,0){\circle{8}}\put(61,0){\circle{8}}\put(88,0){\circle{8}}\put(115,0){\circle{8}}\put(142,0){\circle{8}}
\put(11,0){\line(1,0){19}}\put(31,0){\vertexbn}\put(65,0){\line(1,0){19}}\put(92,0){\line(1,0){19}}\put(113,0){\vertexcn}
\put(7,10){\makebox(0,0){$1$}}\put(34,10){\makebox(0,0){$2$}}\put(61,10){\makebox(0,0){$3$}}
\put(88,10){\makebox(0,0){$4$}}\put(115,10){\makebox(0,0){$5$}}\put(142,10){\makebox(0,0){$6$}}
\end{picture}  
  
 \begin{picture}(60,30) \thicklines
\put(7,0){\circle{8}}\put(34,0){\circle{8}}\put(61,0){\circle*{8}}\put(88,0){\circle{8}}\put(115,0){\circle{8}}\put(142,0){\circle{8}}
\put(11,0){\line(1,0){19}}\put(31,0){\vertexbn}\put(65,0){\line(1,0){19}}\put(92,0){\line(1,0){19}}\put(113,0){\vertexcn}
\put(7,10){\makebox(0,0){$1$}}\put(34,10){\makebox(0,0){$2$}}\put(61,10){\makebox(0,0){$3$}}
\put(88,10){\makebox(0,0){$4$}}\put(115,10){\makebox(0,0){$5$}}\put(142,10){\makebox(0,0){$6$}}
\end{picture}  

 \begin{picture}(60,30) \thicklines
\put(7,0){\circle{8}}\put(34,0){\circle{8}}\put(61,0){\circle{8}}\put(88,0){\circle*{8}}\put(115,0){\circle{8}}\put(142,0){\circle{8}}
\put(11,0){\line(1,0){19}}\put(31,0){\vertexbn}\put(65,0){\line(1,0){19}}\put(92,0){\line(1,0){19}}\put(113,0){\vertexcn}
\put(7,10){\makebox(0,0){$1$}}\put(34,10){\makebox(0,0){$2$}}\put(61,10){\makebox(0,0){$3$}}
\put(88,10){\makebox(0,0){$4$}}\put(115,10){\makebox(0,0){$5$}}\put(142,10){\makebox(0,0){$6$}}
\end{picture}

 \begin{picture}(60,30) \thicklines
\put(7,0){\circle{8}}\put(34,0){\circle{8}}\put(61,0){\circle{8}}\put(88,0){\circle{8}}\put(115,0){\circle{8}}\put(142,0){\circle*{8}}
\put(11,0){\line(1,0){19}}\put(31,0){\vertexbn}\put(65,0){\line(1,0){19}}\put(92,0){\line(1,0){19}}\put(113,0){\vertexcn}
\put(7,10){\makebox(0,0){$1$}}\put(34,10){\makebox(0,0){$2$}}\put(61,10){\makebox(0,0){$3$}}
\put(88,10){\makebox(0,0){$4$}}\put(115,10){\makebox(0,0){$5$}}\put(142,10){\makebox(0,0){$6$}}
\end{picture}  
\vspace{1cm}\\
Note- We have drawn only single painted equivalance class, one can obtained double or triple painted
   Vogan diagrams non equivalent to single painted diagram. \\
The Vogan diagrams of $HD_{n,n+2}^{(1)}$

\begin{picture}(160,30) \thicklines
   \put(7,0){\circle{8}}\put(34,0){\circle{8}}\put(61,27){\circle{8}}\put(61,0){\circle{8}}
   \put(88,0){\circle{8}}\put(142,0){\circle{8}}\put(165,24){\circle{8}}\put(165,-24){\circle{8}}
  \put(11,0){\line(1,0){18}}\put(38,0){\line(1,0){18}}\put(65,0){\line(1,0){18}}\put(61,4){\line(0,1){18}}\put(92,0){\line(1,0){14}}
  \put(106,0){\dottedline{4}(1,0)(14,0)}\put(124,0){\line(1,0){14}}\put(145,3){\line(1,1){18}}\put(145,-3){\line(1,-1){18}}
        \end{picture} 
  
 \begin{picture}(160,80) \thicklines
   \put(7,0){\circle{8}}\put(34,0){\circle{8}}\put(61,27){\circle{8}}\put(61,0){\circle{8}}
   \put(88,0){\circle{8}}\put(142,0){\circle{8}}\put(165,24){\circle{8}}\put(165,-24){\circle{8}}
  \put(11,0){\line(1,0){18}}\put(38,0){\line(1,0){18}}\put(65,0){\line(1,0){18}}\put(61,4){\line(0,1){18}}\put(92,0){\line(1,0){14}}
  \put(106,0){\dottedline{4}(1,0)(14,0)}\put(124,0){\line(1,0){14}}\put(145,3){\line(1,1){18}}\put(145,-3){\line(1,-1){18}}
      \qbezier(176, 0)(175, -15)(170, -23)
    \qbezier(176, 0)(175, 15)(170, 23)
    \put(170,23){\vector( -1, 2){0}}
\put(170,-23){\vector( -1, -2){0}}
  \end{picture} 
  \vspace{1cm}
  
  \begin{picture}(160,80) \thicklines
   \put(7,0){\circle{8}}\put(34,0){\circle{8}}\put(61,27){\circle{8}}\put(61,0){\circle{8}}
   \put(88,0){\circle{8}}\put(142,0){\circle*{8}}\put(165,24){\circle{8}}\put(165,-24){\circle{8}}
  \put(11,0){\line(1,0){18}}\put(38,0){\line(1,0){18}}\put(65,0){\line(1,0){18}}\put(61,4){\line(0,1){18}}\put(92,0){\line(1,0){14}}
  \put(106,0){\dottedline{4}(1,0)(14,0)}\put(124,0){\line(1,0){14}}\put(145,3){\line(1,1){18}}\put(145,-3){\line(1,-1){18}}
      \qbezier(176, 0)(175, -15)(170, -23)
    \qbezier(176, 0)(175, 15)(170, 23)
    \put(170,23){\vector( -1, 2){0}}
\put(170,-23){\vector( -1, -2){0}}

  \end{picture} 
  \vspace{1cm}
  
  Vogan diagrams of $H^{2}D_{4}^{(1)}$\\
  
\begin{prop} The equivalence classes of this rank 6 hyperbolic Kac-Moody
algebra are given by
\begin{itemize}
 \item [(a)] $1\sim2$ 
 \item [(b)] $4\sim5\sim6$
 \item [(c)] $3$
\end{itemize}
\end{prop}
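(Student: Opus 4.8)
The plan is to reduce the statement to a finite combinatorial problem and then settle it in two opposite directions. By Theorem 6.2 (the hyperbolic Borel--de Siebenthal theorem) together with Corollary 6.3, every painting of the diagram is equivalent, through the reflection operations $F[i]$ and diagram automorphisms, to one carrying at most a single painted vertex. Hence it suffices to decide which of the one-vertex paintings $(1),(2),\dots,(6)$ (together with the empty painting, which is always its own class) fall together. I would organize the argument as a \emph{merging} step, producing the equivalences asserted in (a)--(c), followed by a \emph{separation} step, proving that the three resulting classes are pairwise distinct.

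For the merging step I would first dispose of (b). The vertices $4,5,6$ are the three symmetric legs attached to the central vertex $3$, so the order-three diagram automorphism (the $S_{3}$ triality permuting these legs) carries $(4)$, $(5)$ and $(6)$ into one another, giving $4\sim5\sim6$ with no reflections at all. For (a) I would produce $(1)\sim(2)$ either from a two-fold automorphism exchanging $1$ and $2$, if the diagram admits one, or else from an explicit $F[i]$-chain: starting at the painted vertex $1$ one reflects and propagates the paint along the edges joining $1$ and $2$ to the rest of the diagram, then reflects back so as to return to a single painted vertex at $2$, exactly in the manner of Lemma 6.4(b) and of the rank-$5$ examples treated above. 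Each step is a routine application of rules (i)--(iii) (flip the colours of the neighbours of the painted vertex, keep that vertex painted), so the only thing to verify is that the intermediate multiply-painted diagrams chain up correctly. Statement (c) needs no merging, since $(3)$ is to remain alone.

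The separation step is where the real work lies, and I expect it to be the main obstacle. Showing that $(3)$ is \emph{not} equivalent to $(1)$ or to $(4)$ cannot be done by exhibiting moves; it requires an invariant of the equivalence relation. The natural device is to pass to $\mathbb{F}_{2}$: encode a painting as its indicator vector in $\mathbb{F}_{2}^{6}$, observe that $F[i]$ adds to it the neighbour vector $\mathbf{1}_{N(i)}$ (the $i$-th column of the mod-$2$ adjacency matrix) while automorphisms act by permutation, and then look for a linear functional annihilating every $\mathbf{1}_{N(i)}$ and fixed by the automorphism group. Such functionals are precisely the elements of the left kernel of the adjacency matrix over $\mathbb{F}_{2}$ (the mod-$2$ cycle space of the diagram); I would compute this kernel explicitly and check that it takes a different value on $(3)$ than on the paintings of the other two classes, thereby detaching the central vertex from the arm $\{1,2\}$ and from the leg-triple $\{4,5,6\}$.

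The subtlety I would have to confront is that $F[i]$ is only \emph{applicable} when $i$ is painted, so the $\mathbb{F}_{2}$ functional is a priori only a necessary condition for equivalence: a quantity constant on each class need not separate every pair of classes that the moves leave disconnected. To make the separation rigorous I would supplement the $\mathbb{F}_{2}$ argument with a direct finite enumeration, generating from each of $(1)$, $(3)$ and one representative leg $(4)$ the full orbit under all admissible $F[i]$ and all automorphisms (the reachable set is small, closing after a few steps) and reading off that no single-vertex painting outside the asserted class ever appears. Combined with Corollary 6.3(a), which guarantees that every painting reduces into one of these orbits, this closes the classification and yields exactly the three classes (a), (b) and (c).
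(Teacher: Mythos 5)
Your merging step reproduces, almost verbatim, the paper's entire proof. The paper's argument consists of exactly three lines: $1\sim 2$ by $F\langle 1,2\rangle$ (that is, $(1)\to(1,2)\to(2)$, the second move leaving vertex $3$ untouched because $3$ is the long end of the double edge), $4\sim5\sim6$ ``from the symmetry of the diagram,'' and $3\sim(2,3,4,5,6)$ by $F\langle 3\rangle$. You obtain (b) by the same symmetry and (a) by the same $F$-chain (there is no automorphism exchanging $1$ and $2$, so your first alternative is vacuous; the one detail you defer --- that the intermediate diagrams ``chain up correctly'' --- is precisely the long-root rule (ii), which keeps vertex $3$ unpainted when $F[2]$ is applied). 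Where you genuinely depart from the paper is the separation step: the paper never proves that the three classes are pairwise distinct, so that part of your plan is additional content, and it is the only part of the proposition that requires more than exhibiting moves.

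Two corrections to how you set that separation up. First, $F[i]$ does not add the $i$-th column of the mod-$2$ adjacency matrix: by rule (ii), $F[2]$ flips only vertex $1$, not vertex $3$. This is not a pedantic point --- if $F[2]$ did flip $3$, one would get $(1)\sim(1,2)\sim(2,3)$, and $(2,3)$ lies in the class of $(3)$ (via $(3)\sim(2,3,4,5,6)\sim(1,2,3,4,5,6)\sim(1,3)\sim(1,2,3)\sim(2,3)$), so classes (a) and (c) would merge and the proposition would be false; the long-root exception is what the statement hinges on. Second, even with the corrected move vectors $e_{2},\,e_{1},\,e_{2}+e_{4}+e_{5}+e_{6},\,e_{3},\,e_{3},\,e_{3}$, their annihilator in $(\mathbb{F}_{2}^{6})^{*}$ is spanned by $x_{4}+x_{5}$ and $x_{5}+x_{6}$: it contains no nonzero $S_{3}$-invariant functional, and every element of it vanishes on both $(1)$ and $(3)$. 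So no linear invariant of the kind you propose can detach class (c) from class (a); the enumeration you present as a ``supplement'' is in fact the entire argument for that pair. It does work, and quickly: from any painting supported in $\{1,2\}$ only $F[1]$ and $F[2]$ are admissible and both preserve that support, so the class of $(1)$ is exactly $\{(1),(2),(1,2)\}$; the $F$-orbit of $(3)$ closes after $12$ paintings, that of $(4)$ after $16$, and $3+12+3\cdot16=63=2^{6}-1$ accounts for every nonempty painting, which confirms both the disjointness and the Borel--de Siebenthal reduction you invoke at the outset. Read this way, your plan proves the proposition --- and proves strictly more than the paper's own proof does.
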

\begin{proof}
 \item [(a)] $1\sim2$ by $F\langle 1,2\rangle$
 \item [(b)] From the symmetry of the diagram we get $4\sim 5\sim 6$
 \item [(c)] $3\sim (2,3,4,5,6)$ by $F\langle 3\rangle$
\end{proof}

  \begin{picture}(160,80) \thicklines
     \put(7,0){\circle{8}}\put(34,0){\circle{8}}\put(61,27){\circle{8}}\put(61,0){\circle{8}}\put(61,-27){\circle{8}}\put(88,0){\circle{8}}
     \put(11,0){\line(1,0){18}}\put(31,0){\vertexcn}\put(65,0){\line(1,0){18}}\put(61,4){\line(0,1){18}}\put(61,-4){\line(0,-1){18}}
         \put(7,10){\makebox(0,0){$1$}}\put(34,10){\makebox(0,0){$2$}}\put(61,36){\makebox(0,0){$4$}}\put(61,-36){\makebox(0,0){$5$}}
     \put(67,10){\makebox(0,0){$3$}}\put(88,10){\makebox(0,0){$6$}}
   \end{picture} 
  \vspace{1cm}
  \begin{picture}(160,80) \thicklines
     \put(7,0){\circle*{8}}\put(34,0){\circle{8}}\put(61,27){\circle{8}}\put(61,0){\circle{8}}\put(61,-27){\circle{8}}\put(88,0){\circle{8}}
     \put(11,0){\line(1,0){18}}\put(31,0){\vertexcn}\put(65,0){\line(1,0){18}}\put(61,4){\line(0,1){18}}\put(61,-4){\line(0,-1){18}}
     \put(7,10){\makebox(0,0){$1$}}\put(34,10){\makebox(0,0){$2$}}\put(61,36){\makebox(0,0){$4$}}\put(61,-36){\makebox(0,0){$5$}}
     \put(67,10){\makebox(0,0){$3$}}\put(88,10){\makebox(0,0){$6$}}
   \end{picture} 
  \vspace{1cm}

  \begin{picture}(160,80) \thicklines
     \put(7,0){\circle{8}}\put(34,0){\circle{8}}\put(61,27){\circle{8}}\put(61,0){\circle*{8}}\put(61,-27){\circle{8}}\put(88,0){\circle{8}}
     \put(11,0){\line(1,0){18}}\put(31,0){\vertexcn}\put(65,0){\line(1,0){18}}\put(61,4){\line(0,1){18}}\put(61,-4){\line(0,-1){18}}
          \put(7,10){\makebox(0,0){$1$}}\put(34,10){\makebox(0,0){$2$}}\put(61,36){\makebox(0,0){$4$}}\put(61,-36){\makebox(0,0){$5$}}
     \put(67,10){\makebox(0,0){$3$}}\put(88,10){\makebox(0,0){$6$}}
   \end{picture} 
  \vspace{1cm}
  \begin{picture}(160,80) \thicklines
     \put(7,0){\circle{8}}\put(34,0){\circle{8}}\put(61,27){\circle{8}}\put(61,0){\circle{8}}\put(61,-27){\circle{8}}
     \put(88,0){\circle*{8}}
     \put(11,0){\line(1,0){18}}\put(31,0){\vertexcn}\put(65,0){\line(1,0){18}}\put(61,4){\line(0,1){18}}\put(61,-4){\line(0,-1){18}}
          \put(7,10){\makebox(0,0){$1$}}\put(34,10){\makebox(0,0){$2$}}\put(61,36){\makebox(0,0){$4$}}\put(61,-36){\makebox(0,0){$5$}}
     \put(67,10){\makebox(0,0){$3$}}\put(88,10){\makebox(0,0){$6$}}
   \end{picture} \\

  \vspace{1cm}
  
  \begin{rmk}
  We have drawn in some cases only single painted equivalance class, one can obtained double or triple painted
   Vogan diagrams which are non equivalent to single painted diagram and get rest of the Vogan diagrams of hyperbolic
   Kac-Moody algebras, since it has a very long list of similar  diagrams.
  \end{rmk}


\noindent {\bf Acknowledgement}\\

The authors thank National Board of Higher Mathematics, India (Project Grant No. 48/3/2008-R$\&$D II/196-R)
for financial support.

\begin{center}
 -xox-
\end{center}
\address{\begin{center}
Biswajit Ransingh, Institute of Mathematical Sciences, Chennai (India), email- bransingh@gmail.com 
\end{center}}
\address{\begin{center}
K C Pati, Department of Mathematics, National Institute of Technology, Rourkela (India), email- kcpati@rediffmail.com 
\end{center}}
\end{document}